\newcommand{\icl}[1]{{\color{black}{#1}}}
\newcommand\Autoref[1]{\@first@ref#1,@}
\def\@throw@dot#1.#2@{#1}
\def\@set@refname#1{
	\edef\@tmp{\getrefbykeydefault{#1}{anchor}{}}%
	\def\@refname{\@nameuse{\expandafter\@throw@dot\@tmp.@autorefname}s}%
}
\def\@first@ref#1,#2{%
	\ifx#2@\autoref{#1}\let\@nextref\@gobble
	\else%
	\@set@refname{#1}
	\@refname~\ref{#1}
	\let\@nextref\@next@ref
	\fi%
	\@nextref#2%
}
\def\@next@ref#1,#2{%
	\ifx#2@ and~\ref{#1}\let\@nextref\@gobble
	\else, \ref{#1}
	\fi%
	\@nextref#2%
}
\newcommand{\mynewtheorem}[2]{
  \newaliascnt{#1}{dummy}
  \newtheorem{#1}[#1]{#2}
  \aliascntresetthe{#1}
  \expandafter\def\csname #1autorefname\endcsname{#2}
}
\theoremstyle{plain}
\theoremstyle{definition}
\theoremstyle{remark}
\newcommand{\appendixref}[1]{\hyperref[#1]{Appendix~\ref{#1}}}
\tikzset{
    right angle quadrant/.code={
        \pgfmathsetmacro\quadranta{{1,1,-1,-1}[#1-1]}     
        \pgfmathsetmacro\quadrantb{{1,-1,-1,1}[#1-1]}},
    right angle quadrant=1, 
    right angle length/.code={\def\rightanglelength{#1}},   
    right angle length=2ex, 
    right angle symbol/.style n args={3}{
        insert path={
            let \p0 = ($(#1)!(#3)!(#2)$) in     
                let \p1 = ($(\p0)!\quadranta*\rightanglelength!(#3)$), 
                \p2 = ($(\p0)!\quadrantb*\rightanglelength!(#2)$) in 
                let \p3 = ($(\p1)+(\p2)-(\p0)$) in  
            (\p1) -- (\p3) -- (\p2)
        }
    }
}
\DeclareMathOperator{\diag}{diag}
\DeclareMathOperator{\Span}{span}
 \DeclareMathOperator*{\interior}{int}
  \DeclareMathOperator*{\relint}{relint}
  \DeclareMathOperator*{\Conv}{Conv}
  \DeclareMathOperator*{\affinespan}{aff}
\begin{document}
\title{Stability and instability \\ in saddle point dynamics - Part I}
\author{Thomas Holding\thanks{\color{black}Thomas Holding is with the Department of Mathematics, Imperial College London, United Kingdom; \tt\small t.holding@imperial.ac.uk} and Ioannis~Lestas\thanks{Ioannis Lestas is with the Department of Engineering, University of Cambridge, Cambridge, CB2 1PZ, United Kingdom;
\tt\small icl20@cam.ac.uk}\thanks{\color{black}Paper \cite{holding2014convergence} is a conference paper that includes preliminary results in this manuscript (Part I). This manuscript includes additional results of independent interest, detailed derivations and extensive discussion.}
}

\markboth{}%
{}

\maketitle
\begin{abstract}\color{black}
We consider the problem of convergence to a saddle point of a concave-convex function via gradient dynamics.
Since first introduced by Arrow, Hurwicz and Uzawa in \cite{arrow} such dynamics have been extensively used in diverse areas, there are, however, features that render their analysis non trivial. These include the lack of convergence guarantees when the function considered is not strictly concave-convex and also the non-smoothness of subgradient dynamics.
Our aim in this two part paper is to provide an explicit characterization to the asymptotic behaviour of general gradient and subgradient dynamics applied to a general concave-convex function. We show that
despite the nonlinearity and non-smoothness of these dynamics their $\omega$-limit set is comprised of trajectories that solve only explicit {\em linear} ODEs that are characterized within the paper.

More precisely, in Part I an exact characterization is provided to the asymptotic behaviour of unconstrained gradient dynamics. 
We also show that when convergence to a saddle point is not guaranteed then the system behaviour can be problematic, with arbitrarily small noise leading to an unbounded variance.
In Part II we consider a general class of subgradient dynamics that restrict trajectories in an arbitrary convex domain, and show that \icl{when an equilibrium point exists} the limiting trajectories are solutions of subgradient dynamics on only affine subspaces. The latter is a smooth class of dynamics with an asymptotic behaviour exactly characterized in Part I, as solutions to explicit linear ODEs. These results are used to formulate corresponding convergence criteria and are demonstrated with several examples and applications presented in Part II.
%
%
\end{abstract}


\begin{IEEEkeywords}
Nonlinear systems, saddle points, {\color{black}gradient dynamics,} large-scale systems.
\end{IEEEkeywords}

\IEEEpeerreviewmaketitle

%
%
%

\section{Introduction}
\IEEEPARstart{F}{inding} the saddle point of a concave-convex function is a problem that is relevant in many applications in engineering and economics and has been addressed by various communities. It includes, for example, optimization problems that are reduced to finding the saddle point of a Lagrangian.
The gradient method, 
first introduced by Arrow, Hurwicz and Uzawa \cite{arrow} has been widely used in this context as it leads to decentralized update rules for network optimization problems. It has therefore been extensively used in areas such as resource allocation in communication and economic networks (e.g. \cite{Hurwicz}, \cite{KMT}, \cite{SrikantB}, \cite{Paganini}, \cite{low1999optimization}, \cite{Lestas-Routing-CDC2004}), {\color{black}game theory \cite{gharesifard2013distributed}, distributed optimization \cite{gharesifard2014distributed, wang2011control, richert2015robust} and power networks \cite{Power1, Power2, Power3, Power3b, devane2016distributed, stegink2017unifying, li2016connecting, mallada2017optimal}}.




Nevertheless,
in broad classes of problems there are features that render the analysis of the asymptotic behaviour of gradient dynamics nontrivial.
In particular,
{\color{black}even though}
for a strictly concave-convex function convergence to a saddle point via gradient dynamics is ensured, {\color{black}when this strictness is lacking,} convergence is not guaranteed and oscillatory solutions can occur. The existence of such oscillations has been reported in {\color{black}various} applications \cite{arrow}, \cite{Paganini}, \cite{Holding-Lestas-CDC2013}, \cite{Goran}, however, an exact characterization {\color{black}of their explicit form for a general concave-convex function, which leads also to a necessary and sufficient condition for their existence, has not been provided} in the literature and is one of the aims of Part~I of this work.   

Furthermore, when subgradient methods are used to restrict the dynamics in a convex domain (needed, e.g., in optimization problems),  the dynamics become non-smooth in continuous-time. This increases significantly the complexity in the analysis as classical Lyapunov and LaSalle type techniques (e.g. \cite{khalil}) cannot be applied. This is also reflected in the alternative approach taken for the convergence proof in \cite{arrow} for subgradient dynamics applied to a strictly concave-convex Lagrangian with positivity constraints. \icl{From an early stage it has been noted in the literature that the right-hand side of (sub)gradient dynamics is monotone \cite{rockafellar1971saddle}.
This has been exploited to derive convergence results under appropriate strictness in the concavity/convexity properties \cite{venets1985continuous}, \cite{goebel2017stability}.}
\icl{In a more recent study \cite{Cherukuri-primal-dual} it was} pointed out that the invariance principle for hybrid automata in \cite{Hybrid} cannot be applied in this context, and gave an alternative proof, by means of Caratheodory's invariance principle, to the convergence result in \cite{arrow} mentioned above. {\color{black} Convergence criteria for unconstrained gradient dynamics
were also derived in \cite{cherukuri2017saddle} and under positivity constraints in \cite{cherukuri2016role}. In general, rigorously proving convergence for the subgradient method, even in what would naively appear to be simple cases, is a non-trivial problem, and requires much machinery from non-smooth analysis \cite{corts2008discontinuous, goebel2009hybrid}.}


Our aim in this two part paper is to provide an explicit characterization 
of the asymptotic behaviour of continuous-time gradient {\color{black}and subgradient dynamics applied to a general concave-convex function.
 Our analysis is carried out in a general setting, where the function with respect to which these dynamics  are applied is not necessarily {strictly} concave-convex.   {\color{black}Furthermore, a general class of subgradient dynamics are considered, where trajectories are restricted in an arbitrary convex domain}. 
 One of our main results is to show that despite the nonlinear and nonsmooth character of these dynamics their $\omega$-limit set is comprised of trajectories that solve explicit {\em linear}~ODEs.}

Our main contributions can be summarized as follows:
\begin{itemize}
\item In {\color{black}Part I}, we consider the gradient method applied on a general concave-convex function in an unconstrained domain, and provide an exact characterization to the limiting solutions, which can in general be oscillatory. In particular, we show that despite the nonlinearity of the dynamics the trajectories converge to solutions that satisfy a linear ODE that is explicitly characterized. Furthermore, we show that when such oscillations occur, the dynamic behaviour can be problematic, in the sense that arbitrarily small stochastic perturbations lead to an unbounded variance, \icl{when the set of saddle points includes a bi-infinite line}.
    \item In {\color{black}Part II}, we consider the subgradient method applied to a general concave-convex function with the trajectories restricted in an arbitrary convex domain, \icl{such that an equilibrium point exists}. We show that despite the non-smooth character of these dynamics,  {\color{black} their limiting behaviour is given by the solutions of one of an explicit family of {\color{black}linear} {\color{black}ODEs. In particular, these ODEs are shown to be solutions of subgradient dynamics on affine subspaces, which is a class of dynamics the asymptotic properties of which are exactly determined in Part I.
        These results are used to formulate corresponding convergence criteria,
        and various examples and applications are discussed.}}
\end{itemize}

It should be noted that
there is a direct link between the results in Part I and Part II as the dynamics, that are proved to be associated with the asymptotic behaviour of the subgradient method, are a class of dynamics that can be analysed with the framework introduced in Part I. Applications of the results in Part I will therefore be discussed in Part II, as in many cases (e.g. optimization problems with inequality constraints) a restricted domain for the concave-convex function needs to be considered.

Finally, we would also like to comment that the methodology used for the derivations in the two papers is of independent technical interest. In Part I the analysis is based on various geometric properties established for the saddle points of a concave-convex function. In Part II the non-smooth analysis is carried out by means of some more abstract results on
{\color{black}corresponding semiflows}
that are applicable in this context, while also making use of the notion of a face of a convex set to characterize the asymptotic behaviour of the dynamics.
%


The Part I paper is structured as follows. In section \ref{sec:Preliminaries} we introduce various definitions and preliminaries that will be used throughout the paper. In section \ref{sec:problem_formulation} the problem formulation is given and the main results are presented in section \ref{sec:Main}, i.e. characterization of the limiting behaviour of gradient dynamics. This section also includes an extension to a class of subgradient dynamics that restrict the trajectories on affine spaces. This is a technical result that will be used in Part~II to characterize the limiting behaviour of general subgradient dynamics. The proofs of the results are finally given in section~\ref{sec:proofs}.

\section{Preliminaries}
\label{sec:Preliminaries}
\subsection{Notation}
Real numbers are denoted by $\mathbb{R}$ and non-negative real numbers as $\mathbb{R}_+$. For vectors $x,y\in\mathbb{R}^n$ the inequality $x<y$ denotes the corresponding element wise inequality, i.e. $x_i<y_i \ \forall i$,
$d(x,y)$ denotes the Euclidean metric and $|x|$ denotes the Euclidean norm.


The space of $k-$times continuously differentiable functions is denoted by $C^k$. For a sufficiently differentiable function $f(x,y):\mathbb{R}^n\times\mathbb{R}^m\to\mathbb{R}$ we denote the vector of partial derivatives of $f$ with respect to $x$ as $f_x$, respectively $f_y$. The Hessian matrices with respect to $x$ and $y$ are denoted $f_{xx}$ and $f_{yy}$,
{\color{black}while $f_{xy}$ denotes the matrix of partial derivatives defined as $[{f_{xy}}]_{ij}:=\frac{\partial f}{\partial x_i \partial y_j}$.}
For a vector valued function $g:\mathbb{R}^n\to\mathbb{R}^m$ we let $g_x$ denote the matrix formed by partial derivatives of the elements of $g$, {\color{black}i.e. $[g_x]_{ij}=\frac{\partial g_i}{\partial x_j}$}.

For a matrix $A\in\mathbb{R}^{n\times m}$ we denote its kernel and transpose by $\ker(A)$ and $A^T$ respectively. If $A$ is in addition symmetric, we write $A<0$ if $A$ is negative definite.

\subsubsection{Geometry}\label{sec:geometry}

For subspaces $E\subseteq\mathbb{R}^n$ we denote the orthogonal complement as $E^\perp$, and for a set of vectors $E\subseteq\mathbb{R}^n$ we denote their span as $\Span(E)$, their affine hull as $\affinespan(E)$ and their convex hull as $\Conv(E)$. \icl{A set $K\subset\mathbb{R}^n$ is a  bi-infinite line if it is the affine hull of two distinct points in $\mathbb{R}^n$.} The addition of a vector $v\in\mathbb{R}^n$ and a set $E\subseteq\mathbb{R}^n$ is defined as $v+E=\{v+u:u\in E\}$.

For a set $K\subset\mathbb{R}^n$, we denote the interior, relative interior, boundary and closure of $K$ as $\interior K$, $\relint K$, $\partial K$ and $\overline{K}$ respectively, and we say that $K$ and $M$ are orthogonal and write $K\perp M$ if for any two pairs of points $\mathbf{k},\mathbf{k'}\in K$ and $\mathbf{m},\mathbf{m'}\in M$, we have $(\mathbf{k}'-\mathbf{k})^T(\mathbf{m}-\mathbf{m'})=0$.

Given a set $E\subseteq\mathbb{R}^n$ and a function $\phi:E\to E$ we say that $\phi$ is an isometry of $(E,d)$ or simply an isometry, if for all $x,y\in E$ we have $d(\phi(x),\phi(y))=d(x,y)$.

For $x\in\mathbb{R},y\in\mathbb{R}_+$ we define $\left[x\right]_y^+=x$ if $y>0$ and $\max(0,x)$ if $y=0$.
\subsubsection{Convex geometry}\label{sec:conv_geom}

{\color{black}For a} closed convex set $K\subseteq\mathbb{R}^n$ and $\mathbf{z}\in\mathbb{R}^n$, we define the maximal orthogonal linear manifold to $K$ through $\mathbf{z}$ as
\begin{equation}\label{eq:orth_lin_man}
M_K(\mathbf{z})=\mathbf{z}+\Span(\{\mathbf{u}-\mathbf{u}':\mathbf{u},\mathbf{u}'\in K\})^\perp
\end{equation}
and the normal cone to $K$ through $\mathbf{z}$ as
\begin{equation}
\!\!\!N_K(\mathbf{z})=\{\mathbf{w}\in\mathbb{R}^n:\mathbf{w}^T(\mathbf{z}'-\mathbf{z})\le0\text{ for all }\mathbf{z}'\in K\}.
\end{equation}
When $K$ is an affine space $N_K(\mathbf{z})$ is independent of $\mathbf{z}\in K$ and is denoted $N_K$. If $K$ is in addition non-empty, then we define the projection of $\mathbf{z}$ onto $K$ as $\mathbf{P}_{K}(\mathbf{z})=\operatorname{argmin}_{\mathbf{w}\in K}d(\mathbf{z},\mathbf{w})$. 

\subsection{\color{black}Concave-convex functions and saddle points}
\begin{definition}[Concave-convex function]\label{def:concave-convex-function}
Let $K\subseteq\mathbb{R}^{n+m}$ be non-empty closed and convex. We say that a function $\varphi(x,y):K\to \mathbb{R}$ is \text{concave-convex on $K$} if for any $(x',y')\in K$, $\varphi(x,y')$ is a concave function of $x$ and $\varphi(x',y)$ is a convex function of $y$. If either the concavity or convexity is always strict, we say that $\varphi$ is \textit{strictly concave-convex on $K$}.
\end{definition}
\begin{definition}[Saddle point]\label{def:saddle-point}
For a concave-convex function $\varphi:\mathbb{R}^n\times\mathbb{R}^m\to\mathbb{R}$ we say that $(\bar{x},\bar{y})\in \mathbb{R}^{n+m}$ is a \textit{saddle point} of $\varphi$ if for all $x\in\mathbb{R}^n$ and $y\in\mathbb{R}^m$ we have the inequality $\varphi(x,\bar{y})\le\varphi(\bar{x},\bar{y})\le\varphi(\bar{x},y)$.
\end{definition}
If $\varphi$ is in addition $C^1$ then $(\bar{x},\bar{y})$ is a saddle point if and only if $\varphi_x(\bar{x},\bar{y})=0$ and $\varphi_y(\bar{x},\bar{y})=0$.

{\color{black}
When we consider a concave-convex function $\varphi(x,y):\mathbb{R}^n\times\mathbb{R}^m\to\mathbb{R}$ we shall denote the pair $\mathbf{z}=(x,y)\in\mathbb{R}^{n+m}$ in bold, and write $\varphi(\mathbf{z})=\varphi(x,y)$. The full Hessian matrix will then be denoted $\varphi_{\mathbf{zz}}$. Vectors in $\mathbb{R}^{n+m}$ and matrices acting on them will be denoted in bold font (e.g. $\mathbf{A}$). Saddle points of $\varphi$ will be denoted $\mathbf{\bar{z}}=(\bar{x},\bar{y})\in\mathbb{R}^{n+m}$.
}

\subsection{\color{black}Dynamical systems}
\begin{definition}[Flows and semi-flows]\label{def:semiflows}
A triple $(\phi,X,\rho)$ is a flow (resp. semi-flow) if $(X,\rho)$ is a metric space, $\phi$ is a continuous map from $\mathbb{R}\times X$ (resp. $\mathbb{R}_+\times X$) to $X$ which satisfies the two properties
\begin{enumerate}[(i)]
\item For all $x\in X$, $\phi(0,x)=x$.
\item For all $x\in X$, $t,s\in\mathbb{R}$ (resp. $\mathbb{R}_+$),
\begin{equation}
\phi(t+s,x)=\phi(t,\phi(s,x)).
\end{equation}
\end{enumerate}
When there is no confusion over which (semi)-flow is meant, we shall denote $\phi(t,x(0))$ as $x(t)$. For sets $A\subseteq\mathbb{R}$ (resp. $\mathbb{R}_+$) and $B\subseteq X$ we define $\phi(A,B)=\{\phi(t,x):t\in A,x\in B\}$.
\end{definition}
\icl{We say that a trajectory $x(t)$ of a semi-flow converges to a trajectory $y(t)$ of the semi-flow if
\begin{gather}
\rho(x(t)-y(t))\to0 \ \text{as} \  t\to\infty. \label{eq:conSol}
\end{gather}
}
\vspace{-4mm}
\begin{definition}[Global convergence]
We say that a (semi)-flow $(\phi,X,\rho)$ is \textit{globally convergent}, if for all initial conditions $x\in X$, the trajectory $\phi(t,x)$ converges to \icl{an equilibrium point.}
\end{definition}

A specific form of incremental stability, which we will refer to as pathwise stability, will be needed in the analysis that follows.
\begin{definition}[Pathwise stability]\label{def:pathwise-stability}
We say that a semi-flow $(\phi,X,\rho)$ is {\color{black}pathwise stable}
if for any two trajectories $x(t),x'(t)$ the distance $\rho(x(t),x'(t))$ is non-increasing in time.
\end{definition}
\icl{Note that in a pathwise stable semi-flow $(\phi,X,\rho)$, for each $t\in\mathbb{R_+}$ that the map from $x\in X$ to $\phi(t, x )$ is non-expansive.}

As the subgradient method has a discontinuous vector field we need the notion of Carath\'eodory solutions of differential equations.
\begin{definition}[Carath\'eodory solution]\label{def:Caratheodory-solution}
We say that a trajectory $\mathbf{z}(t)$ is a \textit{Carath\'eodory solution} to a differential equation $\dot{\mathbf{z}}=\mathbf{f}(\mathbf{z})$, if $\mathbf{z}$ is an absolutely continuous function of $t$, and for almost all times $t$, the derivative $\dot{\mathbf{z}}(t)$ exists and is equal to $\mathbf{f}(\mathbf{z}(t))$.
\end{definition}

\section{Problem formulation}
\label{sec:problem_formulation}
The main object of study in 
{\color{black}Part I}
is the \emph{gradient method} on an arbitrary concave-convex function in $C^2$. 
\begin{definition}[Gradient method]
\label{gradmethod-definition}
Given $\varphi$ a $C^2$ concave-convex function on $\mathbb{R}^{n+m}$, we define the \textit{gradient method} as the flow on $(\mathbb{R}^{n+m},d)$, \icl{where $d$ denotes the Euclidean metric,} generated by the differential equation
\begin{equation}
\label{gradmethod-fullspace}
\begin{aligned}
\dot{x}&=\varphi_x,\\
\dot{y}&=-\varphi_y.
\end{aligned}
\end{equation}
\end{definition}
It is clear that the saddle points of $\varphi$ are exactly the {\color{black}equilibrium points of \eqref{gradmethod-fullspace}.}



In our companion paper \cite{Holding-Lestas-gradient-method-Part-II} we study instead the \emph{subgradient method} where the gradient method (\autoref{gradmethod-definition}) is restricted to a convex set $K$ by the addition of a projection term to the differential equation \eqref{gradmethod-fullspace}.
\begin{definition}[Subgradient method]\label{def:subgradient-method}
Given a non-empty closed convex set $K\subseteq\mathbb{R}^{n+m}$ and a $C^2$ function $\varphi$ that is concave-convex on $K$, we define the \textit{subgradient method on $K$} as a semi-flow on $(K,d)$ consisting of Carath\'eodory solutions of
\begin{equation}\label{gradmethod-convex-domain}
\begin{aligned}
\dot{\mathbf{z}}&=\mathbf{f}(\mathbf{z})-\mathbf{P}_{N_K(\mathbf{z})}(\mathbf{f}(\mathbf{z})),\\
\mathbf{f}(\mathbf{z})&=\icl{\begin{bmatrix}
                        \varphi_x\\ 
			-\varphi_y
                       \end{bmatrix}}.
\end{aligned}
\end{equation}
\end{definition}
{\color{black}Note that the gradient method is the} subgradient method on $\mathbb{R}^{n+m}$. \icl{It should also be noted that the subgradient method \eqref{gradmethod-convex-domain} can be seen as a projected dynamical system with on ODE given by $\mathbf{\dot z} = P_{T_{K(\mathbf{z})}}(\mathbf{f}(\mathbf{z}))$, where $T_{K(\mathbf{z})}$ is the tangent cone to $K$ at $\mathbf{z}$ (see \cite{brogliato2006equivalence} for various equivalent representations).}
{\color{black}In \appendixref{sec:the-addition-of-constant-gains} we
also consider the addition of constant gains to the gradient and subgradient method.}


We briefly summarise {\color{black} below the main contributions of this paper~(Part I).}
\begin{itemize}
\item We provide an exact {\color{black}characterization} of the limiting solutions of the gradient method \eqref{gradmethod-fullspace} applied to {\color{black}an} arbitrary concave-convex {\color{black}function} which is not assumed to be strictly concave-convex. Despite the non-linearity of the gradient dynamics, we show that these limiting solutions solve an explicit linear ODE given by derivatives of the concave-convex function at a {\color{black}saddle point.}
\item We show {\color{black}that the lack of convergence in gradient dynamics can lead to a problematic behaviour, \icl{in the sense that} arbitrarily small stochastic perturbations lead to an unbounded variance \icl{when the set of saddle points includes a bi-infinite line}.}
\item {\color{black}We provide an exact classification of the limiting solutions of the subgradient method on {affine subspaces} by extending the result described in the first bullet point. This will be important for the analysis of general subgradient dynamics considered in Part II \cite{Holding-Lestas-gradient-method-Part-II}. In particular, we show in Part II that the limiting behaviour of the subgradient method on arbitrary convex domains reduces to the limiting behaviour on affine subspaces.}

\end{itemize}

%
%
%

\section{Main Results}
\label{sec:Main}
\icl{This section includes the main results of the paper. Before \icl{presenting {those} some known results in the literature are stated.}}

\icl{An known property of the gradient method \eqref{gradmethod-fullspace} is the fact that it is pathwise stable, which is stated below as \autoref{incrementally-stable-fullspace}. This follows from the fact that the negative of the right-hand side in \eqref{gradmethod-fullspace} is maximal monotone \cite{rockafellar1971saddle}, \cite{goebel2017stability}.}


\begin{lemma}\label{incrementally-stable-fullspace}
Let $\varphi$ be $C^2$ and concave-convex on $\mathbb{R}^{n+m}$, then the gradient method \eqref{gradmethod-fullspace} is pathwise stable.
\end{lemma}

 \icl{Since} saddle points are equilibrium points of the gradient method \icl{the result below immediately follows\footnote{\icl{See also Figure~\ref{fig:twocircles-shaded} in \appendixref{sec:Geometry-of-S-bar-and-S} for a graphical illustration.}}.}
\begin{corollary}\label{cor:dist}
Let $\varphi$ be $C^2$ and concave-convex on $\mathbb{R}^{n+m}$, then the distance of a solution of \eqref{gradmethod-fullspace} to any saddle point is non-increasing in time.
\end{corollary}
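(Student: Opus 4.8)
The plan is to deduce this immediately from \autoref{incrementally-stable-fullspace} together with the observation, recorded just before the corollary, that the saddle points of $\varphi$ coincide with the equilibrium points of \eqref{gradmethod-fullspace}.

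First I would fix a saddle point $\mathbf{\bar{z}}=(\bar{x},\bar{y})$. Since $\varphi$ is $C^1$, a saddle point satisfies $\varphi_x(\mathbf{\bar{z}})=0$ and $\varphi_y(\mathbf{\bar{z}})=0$, so the constant curve $\mathbf{z}'(t)\equiv\mathbf{\bar{z}}$ satisfies \eqref{gradmethod-fullspace} and is therefore a (trivial) trajectory of the gradient method. Next, let $\mathbf{z}(t)$ be an arbitrary solution of \eqref{gradmethod-fullspace}. Applying \autoref{incrementally-stable-fullspace} to the two trajectories $\mathbf{z}(t)$ and $\mathbf{z}'(t)\equiv\mathbf{\bar{z}}$ gives that $d(\mathbf{z}(t),\mathbf{z}'(t))=d(\mathbf{z}(t),\mathbf{\bar{z}})$ is non-increasing in $t$, which is exactly the assertion.

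There is essentially no obstacle here: the only points to verify are that the constant curve at a saddle point is a bona fide trajectory in the sense required by \autoref{incrementally-stable-fullspace} — immediate from the $C^1$ characterization of saddle points — and that solutions of \eqref{gradmethod-fullspace} exist globally in forward time, which follows because the vector field is $C^1$ and pathwise stability already precludes finite-time blow-up. Should one prefer an argument independent of \autoref{incrementally-stable-fullspace}, one could instead set $V(t)=\tfrac{1}{2}\,d(\mathbf{z}(t),\mathbf{\bar{z}})^2$ and compute $\dot{V}=(x-\bar{x})^T\varphi_x(x,y)-(y-\bar{y})^T\varphi_y(x,y)$, then bound each term using the concavity of $\varphi(\cdot,y)$ and convexity of $\varphi(x,\cdot)$ evaluated against the saddle-point inequalities $\varphi_x(\bar{x},\bar{y})=0$, $\varphi_y(\bar{x},\bar{y})=0$ to obtain $\dot{V}\le 0$; but since the proposition is already in hand, the two-line deduction above is the natural route.
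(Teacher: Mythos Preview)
Your proof is correct and follows exactly the paper's approach: the paper simply notes that saddle points are equilibrium points of \eqref{gradmethod-fullspace} and then invokes \autoref{incrementally-stable-fullspace}, which is precisely your two-line deduction. The additional remarks on global existence and the alternative Lyapunov computation are fine but unnecessary for the corollary as stated.
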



\icl{Using LaSalle's theorem and \autoref{incrementally-stable-fullspace} we obtain \autoref{LaSalle-result-full-space}, which is proved in \appendixref{sec:Geometry-of-S-bar-and-S}.  Note that the notion of convergence to a solution in \autoref{LaSalle-result-full-space} (see its definition in \eqref{eq:conSol})
is stronger than that of convergence to a set.} 
\begin{corollary}\label{LaSalle-result-full-space}
Let $\varphi$ be $C^2$, concave-convex on $\mathbb{R}^{n+m}$ \icl{and have at least one saddle point. Then each solution of the} gradient method \eqref{gradmethod-fullspace} converges to a solution
of \eqref{gradmethod-fullspace} which has constant distance from any saddle point.
\end{corollary}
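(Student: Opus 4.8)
The plan is to run LaSalle's invariance principle, taking as a (weak) Lyapunov function the squared Euclidean distance to a fixed saddle point, the required monotonicity being exactly the preceding corollary. First I would fix a saddle point $\bar{\mathbf z}$ of $\varphi$ and set $V_{\bar{\mathbf z}}(\mathbf z)=|\mathbf z-\bar{\mathbf z}|^2$, which is $C^1$ (indeed smooth). Along any solution $\mathbf z(t)$ of \eqref{gradmethod-fullspace} the preceding corollary gives $\frac{d}{dt}V_{\bar{\mathbf z}}(\mathbf z(t))\le 0$, so $\mathbf z(t)$ stays in the closed ball of radius $|\mathbf z(0)-\bar{\mathbf z}|$ about $\bar{\mathbf z}$; hence every trajectory is bounded. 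Since $\varphi\in C^2$ the vector field $\mathbf f$ is $C^1$, so solutions are unique, and being bounded they exist for all $t\ge 0$. Consequently the $\omega$-limit set $\Omega$ of $\mathbf z(t)$ is non-empty, compact and invariant under the flow, with $d(\mathbf z(t),\Omega)\to 0$ as $t\to\infty$.

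Next I would identify the behaviour on $\Omega$. As $V_{\bar{\mathbf z}}(\mathbf z(t))$ is non-increasing and bounded below it converges to some $c\ge 0$; for any $p\in\Omega$, picking $t_k\to\infty$ with $\mathbf z(t_k)\to p$ gives $V_{\bar{\mathbf z}}(p)=\lim_k V_{\bar{\mathbf z}}(\mathbf z(t_k))=c$, so $V_{\bar{\mathbf z}}\equiv c$ on $\Omega$. Since $\Omega$ is invariant, for each $p\in\Omega$ the complete solution $\phi(\cdot,p)$ remains in $\Omega$, whence $V_{\bar{\mathbf z}}(\phi(t,p))=c$ for all $t$: every solution lying in $\Omega$ has constant distance from $\bar{\mathbf z}$. (Equivalently, $\Omega$ is contained in the largest invariant set on which $\frac{d}{dt}V_{\bar{\mathbf z}}=0$, which is LaSalle's conclusion.) Because the same argument applies to every saddle point — the limiting value $c$ then depending on the chosen saddle point — the solutions comprising $\Omega$ have constant distance from every saddle point simultaneously; together with $d(\mathbf z(t),\Omega)\to 0$ this is the asserted convergence.

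I do not expect a genuine obstacle here: the preceding corollary does the real work and the remainder is the limit-set part of LaSalle. The two points needing care are, first, that boundedness — and hence the nonemptiness and invariance of $\Omega$ — relies on the existence of a saddle point, which is precisely where the preceding corollary enters; and second, that \emph{converges to a solution} must be read as convergence to the $\omega$-limit set, which in general is not a single orbit (for $\varphi(x,y)=xy$ it is a circle, and, in view of the linear limiting ODEs established later in Part~I, it can be a torus), so one verifies the constant-distance property for every complete orbit contained in $\Omega$ rather than for a single distinguished limiting trajectory.
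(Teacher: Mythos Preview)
Your proposal is correct and is essentially the same approach as the paper, which simply states that the corollary follows ``by an application of LaSalle's theorem'' without further detail; you have supplied that detail. Your caveats about the implicit assumption $\bar{\mathcal{S}}\ne\emptyset$ and about reading ``converges to a solution'' as convergence to the invariant $\omega$-limit set (rather than to a single orbit) are well taken and consistent with how the paper subsequently uses the result via the set $\mathcal{S}$.
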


Thus classifying the limiting behaviour of the gradient method reduces to the problem of finding all solutions {\color{black}that lie} a constant distance from \textit{any} saddle point.
In order to facilitate the presentation of the results, for a given concave-convex function $\varphi$ we define the following sets:
\begin{itemize}
\item $\bar{\mathcal{S}}$ will denote the set of saddle points of $\varphi$.
\item $\mathcal{S}$ will denote the set of solutions to \eqref{gradmethod-fullspace} that are a constant distance from any saddle point of $\varphi$.
\end{itemize}

Note that if $\bar{\mathcal{S}}=\mathcal{S}\ne\emptyset$ then \autoref{LaSalle-result-full-space} gives the convergence of the gradient method to 
a saddle point.

Our first main result is that solutions of the gradient method converge to solutions that satisfy an explicit linear ODE.

To present our results we define the following matrices of partial derivatives of $\varphi$
\begin{equation}\label{def-of-A-and-B}
\begin{aligned}
\mathbf{A}(\mathbf{z})&=\begin{bmatrix}
0&\varphi_{xy}(\mathbf{z})\\
-\varphi_{yx}(\mathbf{z})&0
\end{bmatrix}\\
\mathbf{B}(\mathbf{z})&=\begin{bmatrix}
\varphi_{xx}(\mathbf{z})&0\\
0&-\varphi_{yy}(\mathbf{z})
\end{bmatrix}.
\end{aligned}
\end{equation}
For simplicity of notation we shall state the result for $\mathbf{0}\in\bar{\mathcal{S}}$; the general case may be obtained by a translation of coordinates.
\begin{theorem}\label{Classification-in-terms-of-B(z)z}
Let $\varphi$ be $C^2$ and concave-convex on $\mathbb{R}^{n+m}$. Let $\mathbf{0}\in\bar{\mathcal{S}}$ then \icl{all} solutions in $\mathcal{S}$ solve the linear ODE: 
\begin{equation}\label{eq:limiting-ODE-fullspace}
\dot{\mathbf{z}}(t)=\mathbf{A}(\mathbf{0})\mathbf{z}(t).
\end{equation}
Furthermore, a solution $\mathbf{z}(t)$ to \eqref{eq:limiting-ODE-fullspace} is in $\mathcal{S}$ if and only if for all $t\in\mathbb{R}$ and $r\in[0,1]$,
\begin{equation}\label{eq:first-thm-kernel-condition}
\mathbf{z}(t)\in\ker(\mathbf{B}(r\mathbf{z}(t)))\cap \ker(\mathbf{A}(r\mathbf{z}(t))-\mathbf{A}(\mathbf{0}))
\end{equation}
where $\mathbf{A}(\mathbf{z})$ and $\mathbf{B}(\mathbf{z})$ are defined by \eqref{def-of-A-and-B}.
\end{theorem}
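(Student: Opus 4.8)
The plan is to exploit pathwise stability (\autoref{incrementally-stable-fullspace}) together with the fact that solutions in $\mathcal{S}$ keep a constant distance from \emph{every} saddle point. First I would fix a solution $\mathbf{z}(t)\in\mathcal{S}$ and a saddle point $\mathbf{\bar z}$; by \autoref{incrementally-stable-fullspace} the map $t\mapsto d(\mathbf{z}(t),\mathbf{\bar z})^2$ is non-increasing, and by definition of $\mathcal{S}$ it is constant. Differentiating along the flow \eqref{gradmethod-fullspace} gives $0=\tfrac{d}{dt}\tfrac12|\mathbf{z}(t)-\mathbf{\bar z}|^2=\ip{\mathbf{z}(t)-\mathbf{\bar z}}{\mathbf{f}(\mathbf{z}(t))}$, so the vector field is orthogonal to the line joining the current point to each saddle point. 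The concave-convex structure should let me turn this into a statement that the ``dissipation'' term vanishes: integrating the second derivative of $\varphi$ along the segment from $\mathbf{\bar z}$ to $\mathbf{z}(t)$ one writes $\mathbf{f}(\mathbf{z}(t))=\int_0^1 (\mathbf{A}(r\mathbf{z}(t))+\mathbf{B}(r\mathbf{z}(t)))(\mathbf{z}(t)-\mathbf{\bar z})\,dr$ (using $\mathbf{f}(\mathbf{\bar z})=0$ and, after the assumed translation, $\mathbf{\bar z}=\mathbf{0}$ so the segment is $r\mathbf{z}(t)$). Here $\mathbf{A}$ is skew-symmetric and $\mathbf{B}$ is negative semidefinite by concavity-convexity, so $\ip{\mathbf{z}}{\mathbf{A}(\cdot)\mathbf{z}}=0$ pointwise and the inner product condition forces $\ip{\mathbf{z}(t)}{\mathbf{B}(r\mathbf{z}(t))\mathbf{z}(t)}=0$ for a.e.\ $r$, hence for all $r\in[0,1]$ by continuity; semidefiniteness of $\mathbf{B}$ upgrades this to $\mathbf{B}(r\mathbf{z}(t))\mathbf{z}(t)=0$, i.e.\ $\mathbf{z}(t)\in\ker\mathbf{B}(r\mathbf{z}(t))$.

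Next I would feed this back into the integral formula: since $\mathbf{B}(r\mathbf{z}(t))\mathbf{z}(t)=0$ for all $r$, we get $\dot{\mathbf{z}}(t)=\mathbf{f}(\mathbf{z}(t))=\int_0^1\mathbf{A}(r\mathbf{z}(t))\mathbf{z}(t)\,dr$. To conclude $\dot{\mathbf{z}}(t)=\mathbf{A}(\mathbf{0})\mathbf{z}(t)$ I need the stronger fact that $\mathbf{A}(r\mathbf{z}(t))\mathbf{z}(t)=\mathbf{A}(\mathbf{0})\mathbf{z}(t)$ for all $r$, which is precisely the second half of \eqref{eq:first-thm-kernel-condition}. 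The idea here is a bootstrapping/rigidity argument: the constant-distance condition must hold with respect to \emph{arbitrary} saddle points, not just $\mathbf{0}$. If $\mathbf{\bar z}'$ is another saddle point, repeating the computation along the segment from $\mathbf{\bar z}'$ to $\mathbf{z}(t)$ yields $\ip{\mathbf{z}(t)-\mathbf{\bar z}'}{\mathbf{f}(\mathbf{z}(t))}=0$; subtracting the two identities gives $\ip{\mathbf{\bar z}'}{\mathbf{f}(\mathbf{z}(t))}=0$. One then wants to vary $\mathbf{\bar z}'$ over (a neighbourhood within) $\bar{\mathcal{S}}$; a key structural input is that the set of saddle points is a convex set on which $\varphi$ is constant and has a product structure, and that along $\mathbf{z}(t)\in\mathcal{S}$ the whole trajectory stays at constant distance from all of $\bar{\mathcal{S}}$. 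Combining $\mathbf{B}(r\mathbf{z})\mathbf{z}=0$ with a Taylor expansion of $\mathbf{f}$ and the orthogonality to the span of $\bar{\mathcal{S}}$ should pin down that the curvature term $\mathbf{A}(r\mathbf{z})-\mathbf{A}(\mathbf{0})$, applied to $\mathbf{z}$, also annihilates $\mathbf{z}$; this needs an argument that the mixed-derivative variation is ``felt'' by the distance function to nearby saddle points.

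For the converse, suppose $\mathbf{z}(t)$ solves \eqref{eq:limiting-ODE-fullspace} and satisfies \eqref{eq:first-thm-kernel-condition} for all $t,r$. I would check directly that it is a genuine solution of the gradient method and that it has constant distance to any saddle point. The kernel conditions make $\mathbf{f}(\mathbf{z}(t))=\int_0^1(\mathbf{A}(r\mathbf{z}(t))+\mathbf{B}(r\mathbf{z}(t)))\mathbf{z}(t)\,dr=\int_0^1\mathbf{A}(\mathbf{0})\mathbf{z}(t)\,dr=\mathbf{A}(\mathbf{0})\mathbf{z}(t)=\dot{\mathbf{z}}(t)$, so $\mathbf{z}(t)$ solves \eqref{gradmethod-fullspace}; and $\tfrac{d}{dt}\tfrac12|\mathbf{z}(t)-\mathbf{\bar z}|^2=\ip{\mathbf{z}(t)-\mathbf{\bar z}}{\mathbf{A}(\mathbf{0})\mathbf{z}(t)}$, which I would show vanishes using skew-symmetry of $\mathbf{A}(\mathbf{0})$ plus the fact (needing a short lemma on the geometry of $\bar{\mathcal{S}}$) that $\mathbf{A}(\mathbf{0})\mathbf{z}(t)\perp(\mathbf{z}(t)-\mathbf{\bar z})$ for every saddle point $\mathbf{\bar z}$ — equivalently $\ip{\mathbf{\bar z}}{\mathbf{A}(\mathbf{0})\mathbf{z}(t)}=0$, i.e.\ $\mathbf{A}(\mathbf{0})^T\mathbf{\bar z}$ is orthogonal to the trajectory. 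The main obstacle I anticipate is exactly the ``arbitrary saddle point'' rigidity step that forces $\mathbf{A}(r\mathbf{z})\mathbf{z}=\mathbf{A}(\mathbf{0})\mathbf{z}$: the pointwise-in-one-saddle-point computation only gives the $\mathbf{B}$-kernel condition and the averaged ODE $\dot{\mathbf{z}}=\int_0^1\mathbf{A}(r\mathbf{z})\mathbf{z}\,dr$, and squeezing out the sharper identity requires carefully using the \emph{family} of constant-distance constraints and the convex geometry of the saddle-point set — this is presumably where the ``various geometric properties established for the saddle points'' advertised in the introduction do the real work.
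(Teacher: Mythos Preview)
Your argument for the $\mathbf{B}$-kernel condition is correct and in fact cleaner than what the paper does at that point: from $0=\tfrac{d}{dt}\tfrac12|\mathbf{z}(t)|^2=\int_0^1\ip{\mathbf{z}}{\mathbf{B}(r\mathbf{z})\mathbf{z}}\,dr$ and sign-definiteness of $\mathbf{B}$ you immediately get $\mathbf{z}(t)\in\ker\mathbf{B}(r\mathbf{z}(t))$ for all $r$.

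The genuine gap is exactly where you flag it, and your proposed mechanism for closing it will not work. You want to use constancy of distance to \emph{other} saddle points $\mathbf{\bar z}'$ to squeeze out $(\mathbf{A}(r\mathbf{z})-\mathbf{A}(\mathbf{0}))\mathbf{z}=0$. But $\bar{\mathcal{S}}$ may well be the single point $\{\mathbf{0}\}$, in which case there are no other saddle points at all, yet the theorem still claims the pointwise $\mathbf{A}$-kernel condition. So ``varying $\mathbf{\bar z}'$ over $\bar{\mathcal{S}}$'' cannot be the source of this rigidity. Your $\mathbf{B}$-argument only leaves you with the averaged equation $\dot{\mathbf{z}}=\int_0^1\mathbf{A}(r\mathbf{z})\mathbf{z}\,dr$, and nothing you have written upgrades this to the pointwise identity.

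The paper's key idea is different and does not use other saddle points: it shows, via a pathwise-stability ``two balls touching at one point'' argument, that for every $s\in[0,1]$ the rescaled curve $\mathbf{z}(t;r)=(r/R)\mathbf{z}(t)$ is \emph{itself} a solution of the gradient method lying in $\mathcal{S}_{\mathbf 0}$. One can then compute $\dot{\mathbf{z}}(t;r)$ in two ways --- once from the gradient flow applied at radius $r$, once by scaling the derivative at radius $R$ --- equate them, and differentiate in $r$. This forces $\mathbf{B}'(r\hat{\mathbf z})\hat{\mathbf z}$ (with $\mathbf{B}'=\mathbf{B}+\mathbf{A}-\mathbf{A}(\mathbf 0)$) to be independent of $r$, hence equal to its value $\mathbf{B}(\mathbf 0)\hat{\mathbf z}$ at $r=0$; constancy of $|\mathbf{z}(t)|$ then kills this term too. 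This scaling lemma is the piece you are missing.

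Your converse also has a loose end: you reduce ``constant distance to every $\mathbf{\bar z}$'' to $\ip{\mathbf{\bar z}}{\mathbf{A}(\mathbf 0)\mathbf{z}(t)}=0$, but there is no reason this should hold for an arbitrary saddle point. The paper instead observes that since $\mathbf{z}(t)=e^{t\mathbf{A}(\mathbf 0)}\mathbf{z}(0)$ with $\mathbf{A}(\mathbf 0)$ skew-symmetric, $t\mapsto d(\mathbf{z}(t),\mathbf{\bar z})^2$ is a finite linear combination of continuous periodic functions; by pathwise stability it is also non-increasing, hence constant. That argument needs no information about $\mathbf{\bar z}$ beyond its being an equilibrium.
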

\icl{The proof of \autoref{Classification-in-terms-of-B(z)z} is provided in \appendixref{sec:Geometry-of-S-bar-and-S} and \appendixref{sec:Classification-of-S}.}
{\color{black}The significance of this result is discussed in the
remarks below.}
\begin{remark}
{\color{black}It should be noted that despite} the \emph{non-linearity} of the gradient dynamics \eqref{gradmethod-fullspace},
the limiting solutions solve {\color{black}a \emph{linear}} ODE with explicit coefficients depending only on the derivatives of $\varphi$ at the saddle point.
\end{remark}
\begin{remark}
\color{black}An important consequence of this exact characterisation of the limiting behaviour, is the fact that the problem of proving global {\color{black}convergence to a saddle point is reduced to that of showing that there are no non-trivial limiting solutions.}
\end{remark}
\begin{remark}\label{rem:check}
{\color{black} Condition} \eqref{eq:first-thm-kernel-condition} appears to be very hard to check, as it requires knowledge of the trajectory for all times $t\in\mathbb{R}$. However,
{\color{black}when the aim is to prove convergence to an equilibrium point,
the form of condition \eqref{eq:first-thm-kernel-condition} makes the stability condition more powerful,} 
as it makes it easier to prove that {\color{black}non-trivial} trajectories do \emph{not} satisfy the condition. \icl{In particular, for various classes of gradient dynamics, the structure of matrices $\mathbf{A}(\mathbf{z})$ and $\mathbf{B}(\mathbf{z})$ are often sufficient to deduce that \eqref{eq:limiting-ODE-fullspace}, \eqref{eq:first-thm-kernel-condition} are only satisfied by saddle points,  without explicitly knowing those a priory, thus proving global convergence to a saddle point. Such examples will be discussed in part II of this manuscript, and include modifications that ensure global convergence to a saddle point without having to resort to an assumption of a {\em strictly} concave-convex function. An application of such a modification to the problem of multipath routing will also be discussed in part II.}
\end{remark}
\begin{remark}[Localisation]\label{rem:localisation}
{\color{black}The conditions in the Theorem use
only local} information about the concave-convex function $\varphi$, in the sense that if $\varphi$ is only concave-convex on a convex subset $K\subseteq\mathbb{R}^{n+m}$ which contains $\mathbf{0}$, then any trajectory $\mathbf{z}(t)$ of the gradient method \eqref{gradmethod-fullspace} that lies a constant distance from any saddle point in $K$ and 
{\color{black}does not leave $K$ at any time $t$} will obey the conditions of the theorem.
\icl{\begin{remark}
A main significance of the exact characterization of the limiting behaviour of gradient method in Theorem \ref{Classification-in-terms-of-B(z)z} is the fact that it has a natural generalization to subgradient dynamics on affine subspaces (presented in section \ref{sec:affine}), which can be used to characterize the limiting solutions of the subgradient method. The latter is a more involved problem due to the non-smoothness of the dynamics and is addressed in part II of this manuscript.
\end{remark}}
\icl{
\begin{remark}\label{rem:localglobal}
One of the results proved in \appendixref{sec:Classification-of-S}
is the fact that the set $\mathcal{S}$ is convex (Proposition\footnote{\icl{This result is also generalized in part II to pathwise stable semiflows.}} \ref{S-is-convex}). From this it follows that global convergence of the gradient method can be deduced from only local convergence properties about one of the saddle points. This is stated below as \autoref{cor:local} which is proved in \appendixref{sec:Classification-of-S}.
\vspace{-1mm}
\begin{lemma}\label{cor:local}
Let $\varphi$ be $C^2$, concave-convex on $\mathbb{R}^{n+m}$, and let  $\mathbf{\bar{z}}$ be a saddle point. Then the gradient method \eqref{gradmethod-fullspace} is globally convergent if and only if there exists a neighbourhoud $\mathcal{N}$ of $\mathbf{\bar{z}}$ such that trajectories with initial condition in $\mathcal{N}$ converge to a saddle point.
\end{lemma}

\end{remark}}
\end{remark}


As a simple illustration of the use of \autoref{Classification-in-terms-of-B(z)z} we show how to recover the well known result that the gradient method is globally convergent under the assumption that $\varphi$ is strictly concave-convex \icl{(first part of \autoref{rem:gradmethod-strict-case})}. \icl{It is also known that it is sufficient to have the strictness only locally  \cite{cherukuri2016role} 
which can be recovered from \autoref{cor:local} (second part of \autoref{rem:gradmethod-strict-case}). Note that in general strictness is not needed to deduce convergence and such examples will be discussed in part II (see also \autoref{rem:check}).
}

\begin{example}\label{rem:gradmethod-strict-case} $\ $
\begin{enumerate}
\item \label{ex:strict1}
Suppose $\varphi$ is strictly concave (the strictly convex case is similar), then $\varphi_{xx}$ is of full rank except at isolated points, and the condition \eqref{eq:first-thm-kernel-condition} can only hold if $x(t)=0$. Then the ODE \eqref{eq:limiting-ODE-fullspace} implies that $y(t)$ is constant, and hence $(x(t),y(t))$ is a saddle point. Thus the only limiting solution of the gradient method \eqref{gradmethod-fullspace} are the saddle points, which establishes that it is globally convergent.
\item \icl{Suppose $\varphi$ is concave-convex, but it is strictly concave-convex only in an open region $\mathcal{B}\subset\mathbb{R}^{n+m}$ that includes a saddle point $\mathbf{\bar{z}}$, rather than in the whole of  $\mathbb{R}^{n+m}$.
     From the stability of $\mathbf{\bar{z}}$ there exists a neighbourhood $\mathcal{N}\subset\mathcal{B}$ of $\mathbf{\bar{z}}$ such that trajectories that start in $\mathcal{N}$ will remain in $\mathcal{B}$ for all times. Furthermore, using the arguments in \ref{ex:strict1}) one can deduce that region $\mathcal{N}$ does not include a trajectory $\mathbf{z}\in\mathcal{S}$ that is not a saddle point. Therefore global convergence can be deduced from \autoref{cor:local}.}

\end{enumerate}
\end{example}

From \autoref{Classification-in-terms-of-B(z)z} we deduce some further results that give a more easily understandable classification of the limiting solutions of the gradient method for simpler forms of $\varphi$.

In particular, the `linear' case occurs when $\varphi$ is a quadratic function, as then the gradient method \eqref{gradmethod-fullspace} is a linear system of ODEs. In this case $\mathcal{S}$ has a simple explicit form in terms of the Hessian matrix of $\varphi$ at $\mathbf{0}\in\bar{\mathcal{S}}$, and in general this provides an inclusion as described below, which can be used to prove global convergence of the gradient method using only local analysis at a saddle point.
\begin{theorem}\label{S-inside-Slinear}
Let $\varphi$ be $C^2$, concave-convex on $\mathbb{R}^{n+m}$ and $\mathbf{0}\in\bar{\mathcal{S}}$. Then define
\begin{equation}\label{equation-Slinear}
\!\!\!\mathcal{S}_{\text{linear}}\!=\!\Span\{\mathbf{v}\!\in\!\ker(\mathbf{B})\!:\!\mathbf{v}\text{ is an eigenvector of }\mathbf{A}\}
\end{equation}
where $\mathbf{A}=\mathbf{A}(\mathbf{0})$ and $\mathbf{B}=\mathbf{B}(\mathbf{0})$ in \eqref{def-of-A-and-B}.
Then $\mathcal{S}\subseteq\mathcal{S}_{\text{linear}}$ with equality if $\varphi$ is a quadratic function.
\end{theorem}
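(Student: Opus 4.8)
The plan is to derive $\mathcal{S}\subseteq\mathcal{S}_{\text{linear}}$ as a direct consequence of \autoref{Classification-in-terms-of-B(z)z}, and then establish equality in the quadratic case by a separate argument. Fix a solution $\mathbf{z}(t)\in\mathcal{S}$. By \autoref{Classification-in-terms-of-B(z)z} it solves the linear ODE $\dot{\mathbf{z}}=\mathbf{A}\mathbf{z}$ with $\mathbf{A}=\mathbf{A}(\mathbf{0})$, and moreover, taking $r=0$ in \eqref{eq:first-thm-kernel-condition}, we have $\mathbf{z}(t)\in\ker(\mathbf{B})$ for all $t$ (the second kernel condition is vacuous at $r=0$). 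So the entire trajectory lives in the subspace $\ker(\mathbf{B})$ while being carried by the linear flow $e^{t\mathbf{A}}$. The key linear-algebra step is then: if a trajectory of $\dot{\mathbf{z}}=\mathbf{A}\mathbf{z}$ stays inside a fixed subspace $V$ (here $V=\ker(\mathbf{B})$) for all $t\in\mathbb{R}$, then its initial condition lies in the span of the real and imaginary parts of eigenvectors of $\mathbf{A}$ that themselves lie in $V$; equivalently, $\mathbf{z}(0)$ lies in the sum of the generalized eigenspaces whose intersection with $V$ is $\mathbf{A}$-invariant and, because the trajectory is bounded (pathwise stability / constant distance to $\mathbf{0}\in\bar{\mathcal S}$ forces boundedness), no genuine Jordan-block (polynomial-in-$t$) growth can occur, so $\mathbf{z}(0)$ is a combination of eigenvectors. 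I would argue that the smallest $\mathbf{A}$-invariant subspace containing the trajectory is contained in $V$, that $\mathbf{A}$ restricted to it is semisimple (again by boundedness), hence it has a basis of eigenvectors of $\mathbf{A}$ lying in $V=\ker(\mathbf{B})$, each of which belongs to $\mathcal{S}_{\text{linear}}$ by definition \eqref{equation-Slinear}. This gives $\mathbf{z}(t)\in\mathcal{S}_{\text{linear}}$ for all $t$, and hence $\mathcal{S}\subseteq\mathcal{S}_{\text{linear}}$.

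For the reverse inclusion in the quadratic case, suppose $\varphi$ is quadratic. Then $\mathbf{A}(\mathbf{z})\equiv\mathbf{A}$ and $\mathbf{B}(\mathbf{z})\equiv\mathbf{B}$ are constant, so the second kernel condition in \eqref{eq:first-thm-kernel-condition} is automatically satisfied ($\mathbf{A}(r\mathbf{z})-\mathbf{A}(\mathbf{0})=0$), and \eqref{eq:first-thm-kernel-condition} reduces to the single requirement $\mathbf{z}(t)\in\ker(\mathbf{B})$ for all $t$. I then need to show that every eigenvector $\mathbf{v}\in\ker(\mathbf{B})$ of $\mathbf{A}$ generates, via the flow $e^{t\mathbf{A}}$, a trajectory that stays in $\ker(\mathbf{B})$; by linearity the same holds for any element of $\mathcal{S}_{\text{linear}}=\Span\{\text{such }\mathbf{v}\}$, which then shows $\mathcal{S}_{\text{linear}}\subseteq\mathcal{S}$. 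The point is that if $\mathbf{A}\mathbf{v}=\lambda\mathbf{v}$ then $e^{t\mathbf{A}}\mathbf{v}=e^{\lambda t}\mathbf{v}$, so the trajectory through $\mathbf{v}$ lies on the line $\mathbb{R}\mathbf{v}$ (or, for complex eigenvalues, in the real $2$-plane $\Span\{\operatorname{Re}\mathbf{v},\operatorname{Im}\mathbf{v}\}$) — and I must check that this whole plane lies in $\ker(\mathbf{B})$. This requires a small argument: taking complex eigenvectors in conjugate pairs, one shows that if $\mathbf{v}=\mathbf{a}+i\mathbf{b}$ is an eigenvector of the real matrix $\mathbf{A}$ with $\mathbf{B}\mathbf{v}=0$, then (using that $\mathbf{B}$ is real) $\mathbf{B}\bar{\mathbf{v}}=0$ as well, hence $\mathbf{B}\mathbf{a}=\mathbf{B}\mathbf{b}=0$, so indeed the real $2$-plane is in $\ker(\mathbf{B})$. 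Combined with the forward inclusion, this yields $\mathcal{S}=\mathcal{S}_{\text{linear}}$ for quadratic $\varphi$.

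The main obstacle I anticipate is the first direction: making rigorous the claim that a bounded trajectory of a linear ODE confined to a subspace $V$ has initial condition in the span of the $\mathbf{A}$-eigenvectors contained in $V$. The subtlety is that $\mathbf{A}$ need not be semisimple on all of $\mathbb{R}^{n+m}$, and $V=\ker(\mathbf{B})$ need not be $\mathbf{A}$-invariant — it is only the particular trajectory, not the subspace, that is constrained. The clean way around this is to pass to $W:=\overline{\Span\{\mathbf{z}(t):t\in\mathbb{R}\}}$, observe $W\subseteq V$ (since $V$ is closed) and $W$ is $\mathbf{A}$-invariant (differentiate), note the trajectory is bounded (constant distance from $\mathbf{0}$), conclude $\mathbf{A}|_W$ has no Jordan blocks of size $\ge 2$ and purely imaginary spectrum, hence $W$ is spanned by real/imaginary parts of eigenvectors of $\mathbf{A}|_W$, which are eigenvectors of $\mathbf{A}$ lying in $V$; each lies in $\mathcal{S}_{\text{linear}}$, so $W\subseteq\mathcal{S}_{\text{linear}}$ and in particular $\mathbf{z}(t)\in\mathcal{S}_{\text{linear}}$. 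One should double-check the handling of the real versus complex eigenvector bookkeeping in the definition \eqref{equation-Slinear} — the span there is implicitly a real span of real vectors, so complex eigenvectors contribute through their real and imaginary parts, and I would state this explicitly to avoid ambiguity.
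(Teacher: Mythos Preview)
Your proposal is correct and follows the same overall strategy as the paper: invoke \autoref{Classification-in-terms-of-B(z)z}, set $r=0$ to land in $\ker(\mathbf{B})$, and then reduce to a linear-algebra statement about $\mathbf{A}$-invariant subspaces of $\ker(\mathbf{B})$; the quadratic case is handled the same way in both.

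The one place you diverge is in the linear-algebra step, and here you are working harder than necessary. You worry that $\mathbf{A}$ need not be semisimple and invoke boundedness of the trajectory to rule out Jordan blocks on the cyclic subspace $W=\overline{\Span\{\mathbf{z}(t)\}}$. But $\mathbf{A}=\mathbf{A}(\mathbf{0})$ is skew-symmetric by construction (see \eqref{def-of-A-and-B}), hence normal, hence globally diagonalizable over $\mathbb{C}$ with orthogonal eigenspaces. The paper exploits this directly via \autoref{largest-invariant-subspace}: for a normal matrix $\mathbf{A}$, the largest $\mathbf{A}$-invariant subspace contained in any given subspace $X$ is exactly $\Span\{\mathbf{v}\in X:\mathbf{v}\text{ an eigenvector of }\mathbf{A}\}$. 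With this lemma both directions become one-liners: $\mathcal{S}_{\text{linear}}$ is $e^{t\mathbf{A}}$-invariant and sits in $\ker(\mathbf{B})$, giving $\mathcal{S}_{\text{linear}}\subseteq\mathcal{S}$ in the quadratic case; and any trajectory in $\mathcal{S}$ generates an $\mathbf{A}$-invariant subspace inside $\ker(\mathbf{B})$, hence lies in $\mathcal{S}_{\text{linear}}$. Your boundedness argument is valid, but the skew-symmetry of $\mathbf{A}$ makes the ``main obstacle'' you anticipate disappear entirely. Your remark about real versus complex eigenvectors in \eqref{equation-Slinear} is well taken and applies equally to the paper's formulation.
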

{\icl{The proof of \autoref{S-inside-Slinear} is provided in \appendixref{sec:Classification-of-S}.}
Here we draw an analogy with the recent study \cite{Bai2010} on the discrete time gradient method in the quadratic case. There the gradient method is proved to be semi-convergent if and only if $\ker(\mathbf{B})=\ker(\mathbf{A}+\mathbf{B})$, i.e. if $\mathcal{S}_{\text{linear}}\subseteq\mathcal{\bar{S}}$. \autoref{S-inside-Slinear} includes a continuous time version of this statement.

{\color{black}We next consider the effect of noise when oscillatory solutions occur, and show that arbitrarily small stochastic perturbations lead to an unbounded variance \icl{when the set of saddle points includes a bi-infinite line}.
In particular, we consider the addition of white noise}
 to the dynamics \eqref{gradmethod-fullspace}. This {\color{black}
leads to the} following stochastic differential equations
\begin{equation}\label{eq:noisy-gradient-method-fullspace}
\begin{aligned}
dx(t)=\varphi_xdt+\Sigma^xdB^x(t)\\
dy(t)=-\varphi_ydt+\Sigma^ydB^y(t)
\end{aligned}
\end{equation}
where $B^x(t),B^y(t)$ are independent standard Brownian motions in $\mathbb{R}^n,\mathbb{R}^m$ respectively, and $\Sigma^x,\Sigma^y$ are positive definite symmetric matrices in $\mathbb{R}^{n\times n},\mathbb{R}^{m\times m}$ respectively.
\begin{theorem}\label{prop:unstable-to-noise}
Let $\varphi\in C^2$ be concave-convex on $\mathbb{R}^{n+m}$. Let $\mathbf{0}\in\bar{\mathcal{S}}$ and $\mathcal{S}$ contain a bi-infinite line. Consider the noisy dynamics \eqref{eq:noisy-gradient-method-fullspace}. Then, for any initial condition, the variance of the solution tends to infinity as $t\to\infty$, in that
\begin{equation}\label{eq:variance-growth}
\mathbb{E}|\mathbf{z}(t)|^2\to\infty  \text{ as }t\to\infty.
\end{equation}
where $\mathbb{E}$ denotes the expectation operator.
\end{theorem}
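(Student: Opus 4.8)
The plan is to exploit the structure that the preceding results give us. By hypothesis $\mathcal{S}$ contains a bi-infinite line through (or translatable to) the origin, say $\ell=\{s\mathbf{v}:s\in\mathbb{R}\}$ for some unit vector $\mathbf{v}$. Since this line consists of solutions of the gradient method that lie a constant distance from every saddle point, \autoref{Classification-in-terms-of-B(z)z} applies: each point $s\mathbf{v}$ satisfies the kernel condition \eqref{eq:first-thm-kernel-condition}, and in particular $\mathbf{v}\in\ker(\mathbf{B}(r s\mathbf{v}))$ for all $r\in[0,1]$ and all $s$. The key consequence I want to extract is that the deterministic drift $\mathbf{f}(\mathbf{z})$ has a very controlled behaviour in the direction $\mathbf{v}$: writing the drift's component along $\mathbf{v}$, I expect that $\mathbf{v}^T\mathbf{f}(\mathbf{z})$ is affine (in fact essentially constant/linear) when restricted to the relevant slab, because the second-order terms that would bend it are killed by $\mathbf{v}\in\ker(\mathbf{B})$ together with the $\ker(\mathbf{A}(\cdot)-\mathbf{A}(\mathbf{0}))$ condition. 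More precisely, I would first establish that for the scalar process $u(t):=\mathbf{v}^T\mathbf{z}(t)$, the drift contribution is of the form $\mathbf{v}^T\mathbf{A}(\mathbf{0})\mathbf{z}(t)$ up to terms that vanish along $\ell$; and since $\mathbf{A}(\mathbf{0})$ is skew-symmetric in the block sense and $\ell\subseteq\mathcal{S}$ solves $\dot{\mathbf z}=\mathbf{A}(\mathbf 0)\mathbf z$, the direction $\mathbf v$ is an eigenvector of $\mathbf{A}(\mathbf 0)$ with purely imaginary (here, zero, since $\ell$ is a line of equilibria-like solutions) eigenvalue — so $\mathbf{v}^T\mathbf{A}(\mathbf{0})\mathbf{z}$ reduces nicely.

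Second, I would apply Itô's formula to $u(t)=\mathbf{v}^T\mathbf{z}(t)$ under the SDE \eqref{eq:noisy-gradient-method-fullspace}. The diffusion part contributes a noise term $\mathbf{v}^T\mathbf{\Sigma}\,d\mathbf{B}(t)$ whose quadratic variation grows linearly: $d\langle u\rangle_t = |\mathbf{\Sigma}^T\mathbf{v}|^2\,dt$ with $|\mathbf{\Sigma}^T\mathbf{v}|^2>0$ strictly, because $\Sigma^x,\Sigma^y$ are positive definite. If I can show the drift of $u(t)$ is (almost surely, or in expectation) bounded — which is where the kernel/eigenvector structure from step one is used: along directions in $\mathcal{S}$ the drift cannot push back toward the origin because pathwise stability (\autoref{incrementally-stable-fullspace}) forbids the dynamics from being attracting there — then $\mathbb{E}[u(t)^2]$ grows at least linearly in $t$, hence $\mathbb{E}|\mathbf{z}(t)|^2\ge\mathbb{E}[u(t)^2]\to\infty$. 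The cleanest route is probably: use concavity-convexity to show $\mathbf{v}^T\mathbf f(\mathbf z)$ has a sign/boundedness property that makes $\mathbb{E}[u(t)]$ bounded and $\operatorname{Var}(u(t))$ unbounded, or directly compute $\frac{d}{dt}\mathbb{E}[u(t)^2]$ and bound the drift cross-term using Cauchy–Schwarz against the (at most linearly growing) noise term.

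The main obstacle I anticipate is controlling the drift term $\mathbf{v}^T\mathbf{f}(\mathbf{z}(t))$ for the \emph{stochastic} trajectory, which wanders off the line $\ell$ where the clean kernel conditions hold. The kernel identities in \eqref{eq:first-thm-kernel-condition} are guaranteed only \emph{on} $\mathcal{S}$, not in a neighbourhood; so I need an a priori estimate that the excursion of $\mathbf{z}(t)$ transverse to $\ell$ does not create a drift that decays $u(t)$ back toward a bounded set. The way I would handle this is to invoke the global structure: by \autoref{incrementally-stable-fullspace} the deterministic flow is pathwise stable and $\ell$ is a line of non-attracting solutions, so the deterministic drift, projected onto $\mathbf v$, can be shown to be a \emph{gradient of nothing confining} — formally, I would argue that if the drift along $\mathbf v$ were confining, the deterministic flow would contract toward $\ell$, contradicting that every translate of $\ell$ by a transverse vector also has a copy in a parallel invariant structure (again from pathwise stability, distances between trajectories are non-increasing, which combined with $\ell\subseteq\mathcal S$ prevents transverse attraction). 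Making this rigorous — turning "no confining drift" into a usable bound like $\mathbf v^T\mathbf f(\mathbf z)\cdot(\mathbf v^T\mathbf z)\le C$ — is the technical heart of the argument; once it is in place, the variance blow-up \eqref{eq:variance-growth} follows from a standard Grönwall-type lower bound on $\mathbb{E}[u(t)^2]$.
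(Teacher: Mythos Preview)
Your proposal has the right broad shape (project onto the distinguished direction, apply It\=o, show the noise wins), but there is a genuine gap precisely where you identify the ``main obstacle'': controlling the drift $\mathbf{v}^T\mathbf{f}(\mathbf{z})$ when $\mathbf{z}$ is \emph{off} the line $\ell$. Your heuristic arguments (``no confining drift'', ``pathwise stability prevents transverse attraction'') do not yield a usable inequality, and in fact the situation is much cleaner than you suspect.

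The paper resolves this with a lemma (\autoref{lem:conserved-quantity-for-noise-proof}) showing that the time-dependent quantity $W(t;\mathbf{z})=\bigl|(e^{t\mathbf{A}(\mathbf{0})}\mathbf{v})^T\mathbf{z}\bigr|^2$ is \emph{exactly conserved} along \emph{every} deterministic trajectory, not only those in $\mathcal{S}$. Two points you are missing: first, the correct direction is not the fixed $\mathbf{v}$ but the rotating $\mathbf{v}(t)=e^{t\mathbf{A}(\mathbf{0})}\mathbf{v}$ (your claim that $\mathbf{v}$ is a zero-eigenvector of $\mathbf{A}(\mathbf{0})$ is unjustified --- the line $\ell\subset\mathcal{S}$ need not be a line of equilibria). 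Second, the proof of conservation is a geometric argument in the spirit of \autoref{infinite-line-of-saddles}: apply pathwise stability not against saddle points but against the \emph{moving} solutions $\pm\lambda\mathbf{v}(t)\in\mathcal{S}$, and take $\lambda\to\infty$; the intersection of the two balls forces $\mathbf{v}(t)^T\mathbf{z}(t)=\mathbf{v}(0)^T\mathbf{z}(0)$ for every trajectory $\mathbf{z}$. Once $\dot W\equiv 0$ along the deterministic flow, It\=o's lemma gives $\frac{d}{dt}\mathbb{E}W\ge c>0$ with no drift term to estimate, and $|\mathbf{z}|^2\ge W/|\mathbf{v}|^2$ finishes it. Your route of bounding the drift rather than annihilating it would require global control of $\mathbf{f}$ off $\ell$ that the kernel conditions in \eqref{eq:first-thm-kernel-condition} simply do not provide.
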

{\icl{The proof of \autoref{prop:unstable-to-noise} is provided in \appendixref{sec:Classification-of-S}.}
The condition that $\mathcal{S}$ contains a bi-infinite line is satisfied, for example, {\color{black}if the set $\mathcal{S}$ is not just a} single point {\color{black}and} $\varphi$ is a quadratic function, and {\color{black}can occur} in applications, e.g. in the multi-path routing example given in our companion paper \cite{Holding-Lestas-gradient-method-Part-II}.

One of the main applications of the gradient method is \icl{to provide convergence to a saddle point of a Lagrangian following from a} 
concave optimization problem where some of the constraints are relaxed by Lagrange multipliers. When all the relaxed constraints are linear, {\color{black}the Lagrangian} $\varphi$ has the form
\begin{equation}\label{varphi-relaxed-linear-constraints}
\varphi(x,y)=U(x)+y^T(Dx+e)
\end{equation}
{\color{black}where $U(x)$ is a concave cost function, $y$ are the Lagrange multipliers, and $D, e$ are a constant matrix and vector respectively associated with the equality constraints}. Under the assumption that $U$ is analytic we obtain a simple exact characterisation of $\mathcal{S}$. One specific case of this was studied by the authors previously in \cite{Holding-Lestas-CDC2013}, but without the analyticity condition.
\begin{theorem}\label{Classification-of-S-in-relaxed-linear-constraints-case}
Let $\varphi$ be defined by \eqref{varphi-relaxed-linear-constraints} with $U$ analytic and $D\in\mathbb{R}^{m\times n}$,  $e\in\mathbb{R}^m$ constant. Assume that $(\bar{x},\bar{y})=\mathbf{\bar{z}}$ is a saddle point of $\varphi$. Then $\mathcal{S}$ is given by
\begin{align}\label{equation-S-relaxed-linear-constraints}
\!\!\!\!\!\mathcal{S}&=\mathbf{\bar{z}}+\Span\{(x,y) \in \mathcal{W}\times\mathbb{R}^m :(x,y)\text{ is}\notag\\
&\quad\quad\quad\quad\quad\quad\text{ an eigenvector of }\begin{bmatrix}
0&D^T\\
-D&0
\end{bmatrix}\Big\}\\
\!\!\!\mathcal{W}&=
\{x\in\mathbb{R}^{n}:s\mapsto U(sx+\bar{x})\text{ is linear for }s\in\mathbb{R}\}.\notag
\end{align}
Furthermore $\mathcal{W}$ is an affine subspace.
\end{theorem}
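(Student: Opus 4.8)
The plan is to specialise \autoref{Classification-in-terms-of-B(z)z} and \autoref{S-inside-Slinear} to the Lagrangian \eqref{varphi-relaxed-linear-constraints} and to exploit analyticity of $U$ to upgrade the inclusion $\mathcal{S}\subseteq\mathcal{S}_{\text{linear}}$ to an equality. After translating coordinates so that $\mathbf{\bar{z}}=\mathbf{0}$ (so $U_x(\bar x)+D^T\bar y=0$ and $D\bar x+e=0$), one computes the relevant Hessian blocks: $\varphi_{xx}(\mathbf{z})=U_{xx}(x)$, $\varphi_{yy}\equiv 0$, and $\varphi_{xy}(\mathbf{z})=D^T$ independent of $\mathbf{z}$. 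Hence $\mathbf{A}(\mathbf{z})=\mathbf{A}(\mathbf{0})=\left[\begin{smallmatrix}0&D^T\\-D&0\end{smallmatrix}\right]$ for all $\mathbf{z}$, so the second kernel condition in \eqref{eq:first-thm-kernel-condition}, namely $\mathbf{z}(t)\in\ker(\mathbf{A}(r\mathbf{z}(t))-\mathbf{A}(\mathbf{0}))$, is automatic. Also $\mathbf{B}(\mathbf{z})=\left[\begin{smallmatrix}U_{xx}(x)&0\\0&0\end{smallmatrix}\right]$, so the first kernel condition reduces to $x(t)\in\ker(U_{xx}(rx(t)))$ for all $r\in[0,1]$, i.e. $x(t)^T U_{xx}(rx(t))=0$.

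Next I would translate this pointwise Hessian condition into the geometric statement that $s\mapsto U(sx+\bar x)$ is affine. Define $h(s)=U(sx(t)+\bar x)$ for fixed $t$; then $h''(s)=x(t)^TU_{xx}(sx(t)+\bar x)\,x(t)$, and the condition $x(t)\in\ker(U_{xx}(rx(t)))$ for $r\in[0,1]$ gives $h''(s)=0$ for $s\in[0,1]$. Here analyticity enters: $h$ is real-analytic in $s$, so $h''\equiv 0$ on $[0,1]$ forces $h''\equiv 0$ on all of $\mathbb{R}$, hence $h$ is affine on $\mathbb{R}$ and $x(t)\in\mathcal{W}$. Conversely, if $x\in\mathcal{W}$ then $h$ is affine, so $h''(s)=x^TU_{xx}(sx+\bar x)x=0$ for all $s$; combined with concavity of $U$ (so $U_{xx}\le 0$) and a standard argument that $v^TMv=0$ with $M\le 0$ symmetric implies $Mv=0$, we get $x\in\ker(U_{xx}(sx+\bar x))$ for all $s$, in particular for $s=r\in[0,1]$. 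So for trajectories of \eqref{eq:limiting-ODE-fullspace} the condition \eqref{eq:first-thm-kernel-condition} is equivalent to $x(t)\in\mathcal{W}$ for all $t$. It remains to verify that the solutions of $\dot{\mathbf{z}}=\mathbf{A}(\mathbf{0})\mathbf{z}$ whose $x$-component stays in $\mathcal{W}$ are exactly the span of the eigenvectors of $\left[\begin{smallmatrix}0&D^T\\-D&0\end{smallmatrix}\right]$ lying in $\mathcal{W}\times\mathbb{R}^m$; since $U_{xx}$ constant (the quadratic sub-case) makes $\mathcal{S}=\mathcal{S}_{\text{linear}}$ by \autoref{S-inside-Slinear}, and in general $\mathcal{W}$ is precisely $\ker(\mathbf{B})$'s $x$-part restricted by analyticity, this matches \eqref{equation-Slinear}; I would check that $\mathcal{W}\times\mathbb{R}^m$ is an invariant subspace for $\mathbf{A}(\mathbf{0})$ and that on a real invariant subspace the reachable set of a skew-type linear ODE from the constraint of staying in it is the span of the (real parts of) eigenvectors contained in it.

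For the claim that $\mathcal{W}$ is an affine subspace: $\mathcal{W}-\bar x=\{x: s\mapsto U(sx+\bar x)\text{ linear}\}$ is closed under scalar multiplication trivially, and closure under addition follows because, by the argument above, $x\in\mathcal{W}-\bar x$ iff $x\in\bigcap_{w\in\mathbb{R}^n}\ker(U_{xx}(w))$ — the analyticity lets one show the set of $x$ with $U_{xx}(w)x=0$ for all $w$ is the same as the set with $h''\equiv0$ — and an intersection of kernels of linear maps is a subspace. (Alternatively one shows directly that if $U$ is affine along $x$ and along $x'$ through $\bar x$, concavity forces it to be affine along $x+x'$.)

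The main obstacle I expect is the step that promotes the local condition ``$h''=0$ on $[0,1]$'' to the global ``$h''\equiv 0$'' and, relatedly, showing the characterisation of $\mathcal{W}$ via a single global kernel $\bigcap_w\ker(U_{xx}(w))$ rather than a $t$-dependent family — this is exactly where analyticity of $U$ is indispensable (the paper explicitly contrasts with the earlier non-analytic result in \cite{Holding-Lestas-CDC2013}), and care is needed because a priori \eqref{eq:first-thm-kernel-condition} only constrains $U_{xx}$ along the finitely many directions $\{x(t)\}$ swept by the trajectory; one must argue that analyticity plus concavity force $U$ to be genuinely affine on the whole affine subspace $\bar x+\mathcal{W}$, not merely along individual lines. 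Everything else — the Hessian computation, the automatic vanishing of the $\mathbf{A}$-condition, and the eigenvector description of the linear ODE's constrained solution set — is routine given \autoref{Classification-in-terms-of-B(z)z} and \autoref{S-inside-Slinear}.
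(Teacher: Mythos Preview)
Your overall plan matches the paper's: translate so $\mathbf{\bar z}=\mathbf{0}$, observe that $\mathbf{A}(\mathbf{z})\equiv\mathbf{A}(\mathbf{0})$ so the second kernel condition in \eqref{eq:first-thm-kernel-condition} is vacuous, reduce the first kernel condition to a statement about $U$ alone, use analyticity to pass from the segment $[0,1]$ to all of $\mathbb{R}$, and finish with the eigenvector description. There is, however, one genuine error and one shortcut that does not work as stated.

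\textbf{The error.} You write that you would ``check that $\mathcal{W}\times\mathbb{R}^m$ is an invariant subspace for $\mathbf{A}(\mathbf{0})$''. This is false in general: for $(x,y)\in\mathcal{W}\times\mathbb{R}^m$ one has $\mathbf{A}(\mathbf{0})(x,y)=(D^Ty,-Dx)$, and nothing forces $D^Ty\in\mathcal{W}$ for arbitrary $y$. What is needed is not that $\mathcal{W}\times\mathbb{R}^m$ be invariant, but the \emph{largest} $\mathbf{A}(\mathbf{0})$-invariant subspace \emph{contained in} $\mathcal{W}\times\mathbb{R}^m$, since this is precisely the set of initial data whose $e^{t\mathbf{A}(\mathbf{0})}$-orbit stays in $\mathcal{W}\times\mathbb{R}^m$. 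Because $\mathbf{A}(\mathbf{0})$ is skew-symmetric, hence normal, that largest invariant subspace is the span of the eigenvectors of $\mathbf{A}(\mathbf{0})$ lying in $\mathcal{W}\times\mathbb{R}^m$. The paper isolates this as a separate linear-algebra lemma (\autoref{largest-invariant-subspace}) and uses it both here and in the proof of \autoref{S-inside-Slinear}.

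\textbf{The shortcut.} Your proposed identity $\mathcal{W}-\bar x=\bigcap_{w\in\mathbb{R}^n}\ker(U_{xx}(w))$ is not justified: knowing that $U$ is affine on the line $\{\bar x+sx:s\in\mathbb{R}\}$ only gives $U_{xx}(\bar x+sx)x=0$ at points \emph{on that line}, not at arbitrary $w$. Your parenthetical alternative is the correct route and is exactly what the paper does: after translating so that $U(0)=0$ and $U_x(0)=0$, one has $\mathcal{W}=U^{-1}(\{0\})$, which is convex (it is the argmax set of a concave function). Analyticity extends any line segment inside $U^{-1}(\{0\})$ to the full bi-infinite line, and a convex set containing the full line through the origin in every direction of its span equals that span. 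This proves in one stroke that $\mathcal{W}$ is a linear subspace, and supplies the subspace $X=\mathcal{W}\times\mathbb{R}^m$ to which the largest-invariant-subspace lemma is applied.
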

{\icl{The proof of \autoref{S-inside-Slinear} is provided in \appendixref{sec:Classification-of-S}.}
\icl{
\begin{remark}\label{rem:simpleChar}
It should be noted that a simple characterization of $\mathcal{S}$ as in \autoref{S-inside-Slinear} and \autoref{Classification-of-S-in-relaxed-linear-constraints-case} is not always necessary in order to prove global convergence to a saddle point by means of \autoref{Classification-in-terms-of-B(z)z}. For example, in the modification methods discussed in part II the structure of the matrices $\mathbf{A}(\mathbf{z})$ and $\mathbf{B}(\mathbf{z})$ are sufficient to deduce global convergence.
\end{remark}
}

\subsection{The subgradient method on affine subspaces}\label{sec:affine}
We now extend the exact classification (\autoref{Classification-in-terms-of-B(z)z}) to the subgradient method on affine subspaces.
{\color{black}The significance of this result is that it allows to provide a characterization of the limiting behaviour of the subgradient method in any convex domain. In particular, one of the main results that will be proved in Part II of this work is the fact that {\em the limiting behaviour of the subgradient method on a general convex domain, \icl{when an equilibrium point exists}, are solutions to subgradient dynamics on only affine subspaces}.


In order to consider subgradient dynamics on an affine subspace, we let} $V$ be an affine subspace of $\mathbb{R}^{n+m}$ and let $\mathbf{\Pi}\in\mathbb{R}^{(n+m)^2}$ be the orthogonal projection matrix onto the orthogonal complement of the normal cone $N_V$. Then the subgradient method \eqref{gradmethod-convex-domain} on $V$ is given by
\begin{align}
\dot{\mathbf{z}}&\icl{=\mathbf{f}(\mathbf{z})-\mathbf{P}_{N_V}(\mathbf{f}(\mathbf{z}))} \nonumber\\
&=\mathbf{\Pi}\mathbf{f}(\mathbf{z}) \label{eq:projected-gradient-method}
\end{align}
where $\mathbf{f}(\mathbf{z})=
\begin{bmatrix}
                        \varphi_x&\!\!\!
			-\varphi_y
                       \end{bmatrix}^T$. We generalise \autoref{Classification-in-terms-of-B(z)z} for this projected form of the gradient method. As with the statement of \autoref{Classification-in-terms-of-B(z)z}, we state the result for $\mathbf{0}$ being an equilibrium point; the general case may be obtained by a translation of coordinates.
\begin{theorem}\label{thm:projected-gradient-method-result}
Let $\mathbf{\Pi}\in\mathbb{R}^{(n+m)^2}$ be an orthogonal projection matrix, $\varphi$ be $C^2$ and concave-convex on $\mathbb{R}^{n+m}$, and $\mathbf{0}$ be an equilibrium point of \eqref{eq:projected-gradient-method}. Then the trajectories $\mathbf{z}(t)$ of \eqref{eq:projected-gradient-method} that lie a constant distance from any equilibrium point of \eqref{eq:projected-gradient-method} are exactly the solutions to the linear ODE:
\begin{equation}\label{eq:projected-gradient-method-linear-ODE}
\dot{\mathbf{z}}(t)=\mathbf{\Pi}\mathbf{A}(\mathbf{0})\mathbf{\Pi}\mathbf{z}(t)
\end{equation}
that satisfy, for all $t\in\mathbb{R}$ and $r\in[0,1]$, the condition
\begin{equation}\label{eq:first-thm-kernel-condition-projected-gradient-method}
\!\!\mathbf{z}(t)\in\ker(\mathbf{\Pi}\mathbf{B}(r\mathbf{z}(t))\mathbf{\Pi})\cap \ker(\mathbf{\Pi}(\mathbf{A}(r\mathbf{z}(t))-\mathbf{A}(\mathbf{0}))\mathbf{\Pi})
\end{equation}
where $\mathbf{A}(\mathbf{z})$ and $\mathbf{B}(\mathbf{z})$ are {\color{black}defined by \eqref{def-of-A-and-B}.}
\end{theorem}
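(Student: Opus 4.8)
The plan is to carry out the proof of \autoref{Classification-in-terms-of-B(z)z} again, with the projection $\mathbf{\Pi}$ inserted throughout. The point is that that proof rests on only four features of the gradient method \eqref{gradmethod-fullspace}: (i) $\mathbf{A}(\mathbf{w})$ is skew-symmetric for every $\mathbf{w}$ (since $\varphi_{yx}=\varphi_{xy}^T$), so that $\mathbf{v}^T\mathbf{A}(\mathbf{w})\mathbf{v}=0$ for all $\mathbf{v}$; (ii) $\mathbf{v}^T\mathbf{B}(\mathbf{w})\mathbf{v}\le 0$ for all $\mathbf{v},\mathbf{w}$, i.e.\ $\mathbf{B}(\mathbf{w})$ is negative semidefinite (concavity in $x$, convexity in $y$); (iii) the Jacobian $D\mathbf{f}(\mathbf{w})=\mathbf{A}(\mathbf{w})+\mathbf{B}(\mathbf{w})$, whose symmetric part is exactly $\mathbf{B}(\mathbf{w})$; and (iv) the vector field is $\mathbf{f}$ itself. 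Passing to $V:=\operatorname{range}\mathbf{\Pi}$ (after the stated translation we may assume $\mathbf{0}\in V$, that $V$ is the state space, and that $\mathbf{\Pi}\mathbf{z}(t)=\mathbf{z}(t)$ along trajectories, since $(I-\mathbf{\Pi})\mathbf{z}$ is conserved by \eqref{eq:projected-gradient-method}), the vector field becomes $\mathbf{\Pi}\mathbf{f}$ and the relevant matrices become $\mathbf{\Pi}\mathbf{A}(\mathbf{w})\mathbf{\Pi}$, still skew-symmetric, and $\mathbf{\Pi}\mathbf{B}(\mathbf{w})\mathbf{\Pi}$, still negative semidefinite, with $\mathbf{\Pi}D\mathbf{f}(\mathbf{w})\mathbf{\Pi}=\mathbf{\Pi}\mathbf{A}(\mathbf{w})\mathbf{\Pi}+\mathbf{\Pi}\mathbf{B}(\mathbf{w})\mathbf{\Pi}$ the Jacobian of $\mathbf{\Pi}\mathbf{f}$ restricted to $V$. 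In particular, repeating the computation of \autoref{incrementally-stable-fullspace} (using $\mathbf{\Pi}^T=\mathbf{\Pi}$ and $\mathbf{\Pi}(\mathbf{z}-\mathbf{z}')=\mathbf{z}-\mathbf{z}'$) shows \eqref{eq:projected-gradient-method} is pathwise stable; this, with LaSalle, is what makes the constant-distance trajectories the relevant ones, although the statement to be proved concerns them directly.

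\emph{Necessity, the routine part.} Let $\mathbf{z}(t)$, $t\in\mathbb{R}$, be a trajectory of \eqref{eq:projected-gradient-method} at constant distance from every equilibrium. Since $\mathbf{0}$ is one, $\mathbf{\Pi}\mathbf{f}(\mathbf{0})=0$, and so $\tfrac12\tfrac{d}{dt}|\mathbf{z}(t)|^2=\mathbf{z}(t)^T\mathbf{\Pi}\mathbf{f}(\mathbf{z}(t))=\mathbf{z}(t)^T\mathbf{\Pi}(\mathbf{f}(\mathbf{z}(t))-\mathbf{f}(\mathbf{0}))=\int_0^1\mathbf{z}(t)^T\mathbf{\Pi}D\mathbf{f}(r\mathbf{z}(t))\mathbf{\Pi}\mathbf{z}(t)\,dr=\int_0^1\mathbf{z}(t)^T\mathbf{\Pi}\mathbf{B}(r\mathbf{z}(t))\mathbf{\Pi}\mathbf{z}(t)\,dr$, the $\mathbf{A}$-term dropping out by skew-symmetry. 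The left-hand side is $0$ and each integrand is $\le 0$, so the integrand vanishes identically; a vanishing nonpositive quadratic form forces $\mathbf{\Pi}\mathbf{B}(r\mathbf{z}(t))\mathbf{\Pi}\mathbf{z}(t)=0$ for all $r\in[0,1]$, $t\in\mathbb{R}$, which is the first half of \eqref{eq:first-thm-kernel-condition-projected-gradient-method}. Consequently $\dot{\mathbf{z}}(t)=\mathbf{\Pi}\mathbf{f}(\mathbf{z}(t))=\int_0^1\mathbf{\Pi}\mathbf{A}(r\mathbf{z}(t))\mathbf{\Pi}\mathbf{z}(t)\,dr$.

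\emph{Necessity, the crux, and the main obstacle.} It remains to upgrade this to $\mathbf{\Pi}\mathbf{A}(r\mathbf{z}(t))\mathbf{\Pi}\mathbf{z}(t)=\mathbf{\Pi}\mathbf{A}(\mathbf{0})\mathbf{\Pi}\mathbf{z}(t)$ for all $r\in[0,1]$, $t\in\mathbb{R}$, which simultaneously yields the linear ODE \eqref{eq:projected-gradient-method-linear-ODE} and the second half of \eqref{eq:first-thm-kernel-condition-projected-gradient-method}. This is the projected analogue of the deepest step in the proof of \autoref{Classification-in-terms-of-B(z)z}, where one invokes the constant distance from \emph{every} equilibrium, together with the geometry of the equilibrium set, to conclude that the flow restricted to the constant-distance trajectories acts by isometries fixing the equilibria, hence is linear with generator equal to the linearisation $\mathbf{\Pi}\mathbf{A}(\mathbf{0})\mathbf{\Pi}$ of the vector field at $\mathbf{0}$. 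I expect this to be the main obstacle: the argument for \autoref{Classification-in-terms-of-B(z)z} uses special properties of the saddle-point set of a concave-convex function (its convexity and the ``rectangular'' closure of saddle points), whereas the equilibrium set of $\mathbf{\Pi}\mathbf{f}$ on $V$ need not be the saddle set of any concave-convex function on $V$ (as $\mathbf{\Pi}\mathbf{B}(\mathbf{w})\mathbf{\Pi}$ need not be block-diagonal in the $x,y$ splitting); one must check that these geometric lemmas can be re-derived from skew-symmetry of $\mathbf{\Pi}\mathbf{A}(\mathbf{w})\mathbf{\Pi}$ and negative semidefiniteness of $\mathbf{\Pi}\mathbf{B}(\mathbf{w})\mathbf{\Pi}$ alone.

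\emph{Sufficiency.} Conversely, let $\mathbf{z}(t)$, $t\in\mathbb{R}$, solve \eqref{eq:projected-gradient-method-linear-ODE} with $\mathbf{\Pi}\mathbf{z}(0)=\mathbf{z}(0)$ and satisfy \eqref{eq:first-thm-kernel-condition-projected-gradient-method} for all $t,r$. Writing $\mathbf{\Pi}\mathbf{f}(\mathbf{z}(t))=\mathbf{\Pi}\mathbf{f}(\mathbf{0})+\int_0^1\mathbf{\Pi}D\mathbf{f}(r\mathbf{z}(t))\mathbf{\Pi}\mathbf{z}(t)\,dr$, the term $\mathbf{\Pi}\mathbf{f}(\mathbf{0})$ vanishes, the $\mathbf{B}$-part of the integral vanishes by the first kernel condition, and the $\mathbf{A}$-part equals $\int_0^1\mathbf{\Pi}\mathbf{A}(\mathbf{0})\mathbf{\Pi}\mathbf{z}(t)\,dr=\mathbf{\Pi}\mathbf{A}(\mathbf{0})\mathbf{\Pi}\mathbf{z}(t)$ by the second; hence $\mathbf{\Pi}\mathbf{f}(\mathbf{z}(t))=\mathbf{\Pi}\mathbf{A}(\mathbf{0})\mathbf{\Pi}\mathbf{z}(t)=\dot{\mathbf{z}}(t)$, so $\mathbf{z}(t)$ is a genuine trajectory of \eqref{eq:projected-gradient-method}. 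Its distance to $\mathbf{0}$ is constant since $\tfrac{d}{dt}|\mathbf{z}(t)|^2=2\mathbf{z}(t)^T\mathbf{\Pi}\mathbf{A}(\mathbf{0})\mathbf{\Pi}\mathbf{z}(t)=0$ by skew-symmetry; and for any other equilibrium $\mathbf{\bar z}$, applying the necessity direction to the (trivially constant-distance) constant trajectory $\mathbf{z}\equiv\mathbf{\bar z}$ gives $0=\mathbf{\Pi}\mathbf{A}(\mathbf{0})\mathbf{\Pi}\mathbf{\bar z}$, whence $\tfrac{d}{dt}|\mathbf{z}(t)-\mathbf{\bar z}|^2=2(\mathbf{z}(t)-\mathbf{\bar z})^T\mathbf{\Pi}\mathbf{A}(\mathbf{0})\mathbf{\Pi}\mathbf{z}(t)=2\mathbf{z}(t)^T\mathbf{\Pi}\mathbf{A}(\mathbf{0})\mathbf{\Pi}\mathbf{\bar z}=0$ as well. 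This establishes the claimed equivalence.
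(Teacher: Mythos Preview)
Your proposal has the right skeleton and several parts are correct, but the step you flag as ``the crux'' is indeed left unproven, and your worry about it is misplaced. The geometric lemmas in \autoref{sec:Geometry-of-S-bar-and-S} --- in particular the key one, \autoref{combined-convexity-of-S-and-bar-S-fullspace}, which says that convex combinations of $\mathbf{\bar z}\in\bar{\mathcal S}$ and $\mathbf z(t)\in\mathcal S_{\mathbf{\bar z}}$ remain in $\mathcal S_{\mathbf{\bar z}}$ --- use \emph{nothing} about concave-convex structure beyond pathwise stability (look again at the proofs of \autoref{convexity-of-saddles-fullspace}, \autoref{S-orthogonal-to-barS-fullspace}, \autoref{Orthogonal-and-constant-distance-from-one-implies-in-S}, \autoref{combined-convexity-of-S-and-bar-S-fullspace}: every argument is a pure ``two-balls'' or ``two-spheres'' consequence of \autoref{incrementally-stable-fullspace}). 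No ``rectangular closure'' of saddle points is invoked anywhere. Since you have already checked pathwise stability of \eqref{eq:projected-gradient-method}, these lemmas transfer verbatim to the projected setting, and then Step~3 of the proof of \autoref{Classification-in-terms-of-B(z)z} goes through with $\mathbf A,\mathbf B,\mathbf B'$ replaced by $\widetilde{\mathbf A}=\mathbf\Pi\mathbf A\mathbf\Pi$, $\widetilde{\mathbf B}=\mathbf\Pi\mathbf B\mathbf\Pi$, $\widetilde{\mathbf B}'=\widetilde{\mathbf B}+(\widetilde{\mathbf A}-\widetilde{\mathbf A}(\mathbf 0))$. This is exactly what the paper does: it states the projected version of \autoref{combined-convexity-of-S-and-bar-S-fullspace} and then says the rest ``carries through unaltered.''

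Two smaller remarks. First, your ``routine part'' is slightly slicker than the paper's route: you extract the $\mathbf\Pi\mathbf B\mathbf\Pi$-kernel condition directly from $\tfrac{d}{dt}|\mathbf z|^2=0$, whereas the paper derives both kernel conditions together from the convexity lemma; either is fine. Second, your sufficiency argument is correct but logically depends on necessity (to get $\mathbf\Pi\mathbf A(\mathbf 0)\mathbf\Pi\mathbf{\bar z}=0$). The paper's sufficiency (Step~2 of the proof of \autoref{Classification-in-terms-of-B(z)z}) is independent: once $\mathbf z(t)$ is shown to be a genuine trajectory of \eqref{eq:projected-gradient-method}, one observes that $t\mapsto|\mathbf z(t)-\mathbf{\bar z}|^2$ is a finite sum of continuous periodic functions (eigenbasis of the skew-symmetric $\mathbf\Pi\mathbf A(\mathbf 0)\mathbf\Pi$) and is non-increasing by pathwise stability, hence constant. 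That avoids the apparent circularity.
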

{\icl{The proof of \autoref{thm:projected-gradient-method-result} is provided in \appendixref{sec:Classification-of-S}.}

{\color{black}\section{Applications}
In many applications associated with saddle point problems, the variables need to be constrained in prescribed domains. These include, for example, positivity constraints on dual variables  in optimization problems where some of the inequality constraints are relaxed with Lagrange multipliers, or more general convex constraints on primal variables. Therefore  applications will be studied in Part II of this work where subgradient dynamics will be \icl{analyzed\footnote{\icl{It should be noted that there are also classes of constrained optimization problems that can also be solved by means of smooth saddle point dynamics, such as the dynamics proposed in \cite{dorr2012smooth}.}}. }

It should be noted, that apart from their significance for saddle point problems without constraints\footnote{\color{black}Note that these include also dual versions of optimization problems with equality constraints.}, a main significance of the results in Part I is that they also lead to a characterization of the asymptotic behaviour of subgradient dynamics. In particular, as mentioned in section \ref{sec:affine}, it will be proved in Part II of this work that the asymptotic behaviour of subgradient dynamics on a general convex domain, is given by solutions to subgradient dynamics on only affine subspaces, which is a class of dynamics the asymptotic behaviour of which can be exactly determined using the results in Part I.
}

\section{Conclusion}
\label{sec:Conclusions}
{\color{black}
We have considered in {\color{black}Part I} the problem of convergence to a saddle point of a general concave-convex function, which is not necessarily strictly concave-convex, via gradient dynamics.
We have provided an exact characterization to the asymptotic behaviour of such dynamics, and have shown that despite their nonlinearity, convergence is guaranteed to trajectories that {\color{black}satisfy} an explicit {\em linear} ODE. We have also shown that when convergence to a saddle point is not {\color{black}ensured} then the behaviour of such dynamics can be problematic, with arbitrarily small noise leading to an unbounded variance \icl{when the set of sadddle points includes a bi-infinite line}. These results have also been extended to subgradient dynamics on affine subspaces, where an exact characterization of their asymptotic behaviour as linear ODEs has also been derived. This class of dynamics will be used as a basis for the results in Part II. In particular, it will be shown in Part II that subgradient dynamics on a general convex domain \icl{that have an equilibrium point,} have an $\omega$-limit set that consists of trajectories that are solutions to subgradient dynamics on only affine subspaces. Various examples and applications will also be presented in Part II.
}


\bibliography{Paper_2}
\bibliographystyle{plain}
\appendix
\section{}

\label{sec:proofs}
In appendices \ref{sec:Geometry-of-S-bar-and-S} and \ref{sec:Classification-of-S} we prove the main results of the paper which are stated in \autoref{sec:Main}.

We first give a brief outline of the derivations of the results to improve {\color{black}their readability.} 
Before we give this summary we \icl{define} some additional notation.

Given $\mathbf{\bar{z}}\in\bar{\mathcal{S}}$, we denote the set of solutions to the gradient method \eqref{gradmethod-fullspace} that are a constant distance from $\mathbf{\bar{z}}$, (but not necessarily other saddle points), as $\mathcal{S}_{\mathbf{\bar{z}}}$. It is later proved that $\mathcal{S}_{\mathbf{\bar{z}}}=\mathcal{S}$ but until then the distinction is important.


{\em Outline of Proofs}:
\begin{itemize}
\item First in \appendixref{sec:Geometry-of-S-bar-and-S} we use the pathwise stability of the gradient method (\autoref{incrementally-stable-fullspace}) and geometric arguments to establish convexity properties of $\mathcal{S}$. 
\icl{\autoref{infinite-line-of-saddles} states that 
$\bar{\mathcal{S}}$} 
can only contain bi-infinite lines in degenerate cases. \autoref{S-orthogonal-to-barS-fullspace} gives an orthogonality condition between $\mathcal{S}$ and $\bar{\mathcal{S}}$ which roughly says that the larger $\bar{\mathcal{S}}$ is, the smaller $\mathcal{S}$ is. These allow us to prove the key result of the section, \autoref{combined-convexity-of-S-and-bar-S-fullspace}, which states that any convex combination of $\mathbf{\bar{z}}\in\bar{\mathcal{S}}$ and $\mathbf{z}(t)\in\mathcal{S}_{\mathbf{\bar{z}}}$ lies in $\mathcal{S}_{\mathbf{\bar{z}}}$.

\item In \appendixref{sec:Classification-of-S} we use the geometric results of \appendixref{sec:Geometry-of-S-bar-and-S} to prove {\color{black}Theorems~\ref{Classification-in-terms-of-B(z)z},  \ref{S-inside-Slinear}.}

To prove \autoref{prop:unstable-to-noise} we first prove \icl{\autoref{lem:conserved-quantity-for-noise-proof}} (analogous to \autoref{infinite-line-of-saddles}) that tells us that $\mathcal{S}$ containing a bi-infinite line implies the presence of a quantity conserved by all solutions of the gradient dynamics \eqref{gradmethod-fullspace}. In the presence of noise, the variance of this quantity converges to infinity and allows us to prove \autoref{prop:unstable-to-noise}.

To prove \autoref{Classification-of-S-in-relaxed-linear-constraints-case} we construct a quantity $V(\mathbf{z})$ that is conserved by solutions in $\mathcal{S}$. In the case considered this has a natural interpretation in terms of the utility function $U(x)$ and the constraints $g(x)$.

Finally \autoref{thm:projected-gradient-method-result} is proved by modifying the {\color{black}proof of Theorem~\ref{Classification-in-terms-of-B(z)z}} to take into account the addition of the projection matrix.
\end{itemize}

\appendices
\section{Geometry of \texorpdfstring{$\bar{\mathcal{S}}$ and $\mathcal{S}$}{S and S}}
\label{sec:Geometry-of-S-bar-and-S}
 In this section we will use the gradient method to derive geometric properties of convex-concave functions. We will start with some simple results which are then used as a basis to derive \autoref{combined-convexity-of-S-and-bar-S-fullspace} the main result of this \icl{section.}

 \icl{We first provide a proof for \autoref{LaSalle-result-full-space} associated with the convergence of all solutions of the gradient method to solutions in the set $\mathcal{S}$.
 \begin{proof}[Proof of \autoref{LaSalle-result-full-space}]
 Using Lasalle's invariance principle with $|\mathbf{z}(t)-\mathbf{\bar z}|^2$ as the Lyapunov like function, where $\mathbf{\bar z}$ is a saddle point, we get convergence of all solutions of the gradient method to the set of solutions that are a constant distance from all saddle points (denoted as set $\mathcal{S}$). In the remainder of the proof we 
 strengthen the convergence to the set $\mathcal{S}$, to convergence to a solution\footnote{\icl{The proof is based on arguments analogous to those in \cite[Proposition 42]{Holding-Lestas-CDC2013}}} in $\mathcal{S}$ (see the definition of convergence to a solution in \eqref{eq:conSol}).

 Let $\mathbf{z}(t)$ be a solution of \eqref{gradmethod-fullspace}.
     From the convergence to the set $\mathcal{S}$ and \autoref{incrementally-stable-fullspace} there exist points $\mathbf{z}^{(n)}\in\mathcal{S}$ and times $t_n$ such that,
 \begin{eqnarray}
 |\mathbf{z}(t_n)-\mathbf{z}^{(n)} |\le 1/n.
 \end{eqnarray}
We now consider the solutions $\mathbf{z}^{(n)}(t)\in\mathcal{S}$ with $\mathbf{z}^{(n)}(t_n)=\mathbf{z}^{(n)}$. By an application of \autoref{incrementally-stable-fullspace}, we have for all $t\ge t_n$,
 \begin{eqnarray}\label{eq:zn}
|\mathbf{z}(t)-\mathbf{z}^{(n)}(t)|\le 1/n.
 \end{eqnarray}
 From the boundedness of $\mathbf{z}(t)$ and \eqref{eq:zn} the set and $\{\mathbf{z}^{(n)}:n\in\mathbb{N}\}$ is relatively compact, and by the constant distance of each trajectory in $\mathcal{S}$ from all saddle points, the set of initial conditions $\{\mathbf{z}^{(n)}(0):n\in\mathbb{N}\}$ is also relatively compact. There is hence a subsequence $n_k$ for which $\mathbf{z}^{(n_k)}(0)\to\mathbf{z}'(0)\in\mathcal{S}$ as $k\to\infty$. We claim that $|\mathbf{z}(t)-\mathbf{z}'(t)|\to0$ as $t\to\infty$. Indeed, for any $\epsilon>0$ there exists a $k\in\mathbb{N}$ such that for all $t\ge t_{n_k}$, we have
\begin{eqnarray}
  |\mathbf{z}(t)-\mathbf{z}^{(n_k)}(t)|\le \varepsilon/2
  \end{eqnarray}
  and also for all $t\ge0$,
   \begin{eqnarray}
   |\mathbf{z}'(t)-\mathbf{z}^{(n_{k})}(t)|\le \varepsilon/2
   \end{eqnarray}
   where in each case we have used \autoref{incrementally-stable-fullspace}.
   The claim now follows from the triangle inequality, which completes the proof of \autoref{LaSalle-result-full-space}.
 \end{proof}}

\begin{lemma}[\icl{\cite{rockafellar1970convex}}]\label{convexity-of-saddles-fullspace}
Let $\varphi\in C^2$ be concave-convex on $\mathbb{R}^{n+m}$, then $\bar{\mathcal{S}}$, the set of saddle {\color{black}points} of $\varphi$, is closed and convex.
\end{lemma}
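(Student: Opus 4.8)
The plan is to treat the two assertions separately: closedness follows directly from the differential characterisation of saddle points, and convexity from the pathwise stability of the gradient method (\autoref{incrementally-stable-fullspace}), in the spirit announced at the start of this section.

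For closedness, I would use the fact noted just after \autoref{def:saddle-point} that, since $\varphi\in C^2$, a point $\mathbf{\bar{z}}=(\bar x,\bar y)$ is a saddle point if and only if $\varphi_x(\mathbf{\bar{z}})=0$ and $\varphi_y(\mathbf{\bar{z}})=0$. Thus $\bar{\mathcal{S}}$ is the preimage of $\{\mathbf{0}\}$ under the continuous map $\mathbf{z}\mapsto(\varphi_x(\mathbf{z}),\varphi_y(\mathbf{z}))$ and is therefore closed. (Alternatively, straight from \autoref{def:saddle-point}, $\bar{\mathcal{S}}$ is the intersection, over all $(x,y)$, of sets of the form $\{\mathbf{\bar{z}}:\varphi(x,\bar y)\le\varphi(\mathbf{\bar{z}})\le\varphi(\bar x,y)\}$, each of which is closed by continuity of $\varphi$.)

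For convexity, take $\mathbf{\bar{z}}_0,\mathbf{\bar{z}}_1\in\bar{\mathcal{S}}$, fix $\lambda\in(0,1)$, and set $\mathbf{\bar{z}}_\lambda=(1-\lambda)\mathbf{\bar{z}}_0+\lambda\mathbf{\bar{z}}_1$ and $L=d(\mathbf{\bar{z}}_0,\mathbf{\bar{z}}_1)$. Let $\mathbf{z}(t)$ be the trajectory of the gradient method \eqref{gradmethod-fullspace} with $\mathbf{z}(0)=\mathbf{\bar{z}}_\lambda$. Since saddle points are equilibrium points of \eqref{gradmethod-fullspace}, the constant functions $\mathbf{\bar{z}}_0$ and $\mathbf{\bar{z}}_1$ are solutions of \eqref{gradmethod-fullspace} as well, so by pathwise stability the maps $t\mapsto d(\mathbf{z}(t),\mathbf{\bar{z}}_0)$ and $t\mapsto d(\mathbf{z}(t),\mathbf{\bar{z}}_1)$ are non-increasing. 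As $d(\mathbf{z}(0),\mathbf{\bar{z}}_0)=\lambda L$ and $d(\mathbf{z}(0),\mathbf{\bar{z}}_1)=(1-\lambda)L$, it follows that for every $t\ge0$
\[
d(\mathbf{z}(t),\mathbf{\bar{z}}_0)+d(\mathbf{z}(t),\mathbf{\bar{z}}_1)\le \lambda L+(1-\lambda)L=L,
\]
whereas the triangle inequality gives the reverse inequality $\ge L$. Hence both non-increasing maps are in fact constant, equal to $\lambda L$ and $(1-\lambda)L$ respectively, and the triangle inequality holds with equality; in Euclidean space this forces $\mathbf{z}(t)$ onto the segment $[\mathbf{\bar{z}}_0,\mathbf{\bar{z}}_1]$, and the only point of that segment at distance $\lambda L$ from $\mathbf{\bar{z}}_0$ is $\mathbf{\bar{z}}_\lambda$. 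Thus $\mathbf{z}(t)\equiv\mathbf{\bar{z}}_\lambda$, so $\mathbf{\bar{z}}_\lambda$ is an equilibrium point of \eqref{gradmethod-fullspace}, i.e. a saddle point, and $\bar{\mathcal{S}}$ is convex.

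There is no serious obstacle; the proof is short. The only points needing a little care are the final geometric deduction (a point whose distances to $\mathbf{\bar{z}}_0$ and $\mathbf{\bar{z}}_1$ sum to exactly $d(\mathbf{\bar{z}}_0,\mathbf{\bar{z}}_1)$ must lie on the segment joining them and is then pinned down by either of those distances), and the tacit claim that $\mathbf{z}(t)$ exists for all $t\ge0$ — which is immediate here, since pathwise stability keeps $\mathbf{z}(t)$ within distance $\lambda L$ of $\mathbf{\bar{z}}_0$, precluding finite-time blow-up.
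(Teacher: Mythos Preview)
Your proof is correct and follows essentially the same approach as the paper: closedness from continuity of the first derivatives, and convexity by using pathwise stability (\autoref{incrementally-stable-fullspace}) to trap the trajectory through $\mathbf{\bar z}_\lambda$ in the intersection of the two closed balls about $\mathbf{\bar z}_0$ and $\mathbf{\bar z}_1$ that touch at $\mathbf{\bar z}_\lambda$. The paper states the geometric picture (two balls meeting at a single point) directly, whereas you spell out the underlying triangle-inequality argument; these are the same proof.
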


\begin{figure}[h]
\centering
\begin{tikzpicture}[scale=0.6,anchor=mid,>=latex',line join=bevel,]
\coordinate (L) at (0,0);
\coordinate (R) at (10,0);
\coordinate (nearest) at (5,0);
\coordinate [label=above:\textcolor{blue}{$\mathbf{z}$}] (z) at (5,1);
\coordinate [label=below:\textcolor{blue}{$\mathbf{\bar{a}}$}] (a) at ($(nearest)-(1,0)$);
\coordinate [label=below:\textcolor{blue}{$\mathbf{\bar{b}}$}] (b) at ($(nearest)+(1,0)$);
\begin{scope} 
 \clip (a) circle ({sqrt(2)});
 \fill[green!20] (b) circle ({sqrt(2)});
\end{scope}
\fill (z) circle (3pt);
\draw [<->] (L) -- (nearest) -- (R) node[right,fill=white] {$L$};
\draw [dashed,<->] ($(nearest)-(0,3)$) -- ($(nearest)+(0,3)$);

\fill (a) circle (3pt);
\fill (b) circle (3pt);

\draw (a) circle ({sqrt(2)});
\draw (b) circle ({sqrt(2)});

\end{tikzpicture}
\caption{$\mathbf{\bar{a}}$ and $\mathbf{\bar{b}}$ are two saddle points of $\varphi$ which is $C^2$ and concave-convex on $\mathbb{R}^{n+m}$. Solutions of \eqref{gradmethod-fullspace} are constrained to lie in the shaded region for all positive time by \autoref{incrementally-stable-fullspace}.}\label{fig:twocircles-shaded}
\end{figure}
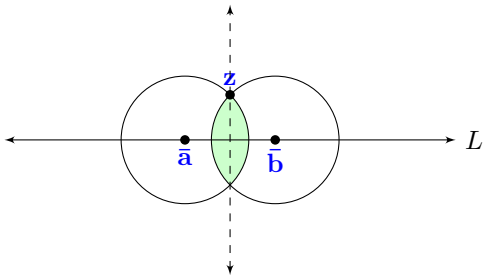
\begin{figure}[h]
\centering
\begin{tikzpicture}[scale=0.6,anchor=mid,>=latex',line join=bevel,]
\begin{scope}[
        yshift=-160,every node/.append style={
        yslant=0.5,xslant=-1.3},yslant=0.5,xslant=-1.3
                  ]
\draw[black,very thick] (0,0) rectangle (4,4);
\draw [help lines] (0,0) grid +(4,4);
\coordinate (bottomcentre) at (2,2);
\coordinate (zbottom) at (1.4,3.7);
\coordinate (zbottomend) at (0.4,2.5);
\fill[blue] (zbottom) circle (3pt);
\draw[blue,->] (zbottom) parabola (zbottomend);
\end{scope}
\begin{scope}[
            yshift=-100,every node/.append style={
            yslant=0.5,xslant=-1.3},yslant=0.5,xslant=-1.3
            ]
\fill[white,fill opacity=.75] (0,0) rectangle (4,4); 
\draw[black,very thick] (0,0) rectangle (4,4);
\draw [help lines] (0,0) grid +(4,4);
\coordinate (topcentre) at (2,2);
\coordinate (ztop) at (1.4,3.7);
\coordinate (ztopend) at (0.4,2.5);
\fill[blue] (ztop) circle (3pt);
\draw[blue,->] (ztop) parabola (ztopend);
\end{scope}
\fill (topcentre) circle (3pt);
\fill (bottomcentre) circle (3pt);
\draw [<->] ($(topcentre) ! 2 ! (bottomcentre)$) -- ($(bottomcentre) ! 2 ! (topcentre)$) node[above] {$L$};
\draw[red] [<->] (zbottom) -- (ztop);
\draw[red] [<->] (zbottomend) -- (ztopend);
\coordinate [label=right:\textcolor{blue}{$\mathbf{z}$}] (zbottomlabel) at (zbottom);
\coordinate [label=right:\textcolor{blue}{$\mathbf{z}+s\mathbf{b}$}] (ztoplabel) at (ztop);
\begin{scope}[
            yshift=-100,every node/.append style={
            yslant=0.5,xslant=-1.3},yslant=0.5,xslant=-1.3
            ]
\draw[black,very thick] (0,0) rectangle (4,4);
\end{scope}
\end{tikzpicture}
\caption{$L$ is a line of saddle points of $\varphi$ which is $C^2$ and concave-convex on $\mathbb{R}^{n+m}$. Solutions of \eqref{gradmethod-fullspace} starting on hyperplanes normal to $L$ are constrained to lie on these planes for all time. $\mathbf{z}$ lies on one normal hyperplane, and $\mathbf{z}+s\mathbf{b}$ lies on another. Considering the solutions of \eqref{gradmethod-fullspace} starting from each we see that by \autoref{incrementally-stable-fullspace} the distance between these two solutions must be constant and equal to $|s\mathbf{b}|$. %
}\label{fig:twoplanes}
\end{figure}
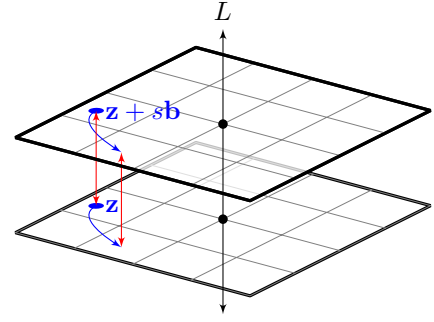
\begin{lemma}\label{infinite-line-of-saddles}
Let $\varphi$ be $C^2$ and concave-convex on $\mathbb{R}^{n+m}$. Let the set of saddle points of $\varphi$ contain the infinite line $L=\{\mathbf{a}+s\mathbf{b}:s\in\mathbb{R}\}$ for some $\mathbf{a},\mathbf{b}\in\mathbb{R}^{n+m}$. Then $\varphi$ is translation invariant in the direction of $L$, i.e. $\varphi(\mathbf{z})=\varphi(\mathbf{z}+s\mathbf{b})$ for any $s\in\mathbb{R}$.
\end{lemma}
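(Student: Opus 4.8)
The plan is to exploit the pathwise stability of the gradient method (\autoref{incrementally-stable-fullspace}) in essentially the way the figure \autoref{fig:twoplanes} suggests: trajectories starting on a hyperplane normal to $L$ stay on that hyperplane, and two such trajectories starting on parallel normal hyperplanes must maintain constant distance. First I would fix $\mathbf{b}$ (WLOG $|\mathbf{b}|=1$) and observe that since every point of $L$ is an equilibrium, and the distance from any trajectory to each equilibrium is non-increasing (the Corollary after \autoref{incrementally-stable-fullspace}), any solution $\mathbf{z}(t)$ with $\mathbf{z}(0)=\mathbf{z}$ stays within the intersection of all balls $B(\mathbf{a}+s\mathbf{b}, d(\mathbf{z},\mathbf{a}+s\mathbf{b}))$ over $s\in\mathbb{R}$; this intersection is contained in the hyperplane through $\mathbf{z}$ orthogonal to $\mathbf{b}$. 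Hence the component of $\mathbf{z}(t)$ in the $\mathbf{b}$-direction is conserved: $\mathbf{b}^T\mathbf{z}(t)=\mathbf{b}^T\mathbf{z}(0)$ for all $t$, which means $\mathbf{b}^T\mathbf{f}(\mathbf{z})=0$ at every point $\mathbf{z}$ (since through every point there is a solution and the flow is defined on all of $\mathbb{R}^{n+m}$).

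Next I would take two initial conditions $\mathbf{z}$ and $\mathbf{z}+s\mathbf{b}$ and let $\mathbf{z}(t)$, $\mathbf{z}'(t)$ be the corresponding solutions. By pathwise stability $d(\mathbf{z}(t),\mathbf{z}'(t))$ is non-increasing; but applying the same bound with time reversed (or using that each stays on its own normal hyperplane, which are at fixed separation $s$, together with the fact that the projection onto the $\mathbf{b}^\perp$ directions can only contract) I would argue $d(\mathbf{z}(t),\mathbf{z}'(t))=s=d(\mathbf{z},\mathbf{z}')$ for all $t\ge 0$: the distance cannot drop below $|s|$ because the two trajectories lie on parallel hyperplanes a distance $|s|$ apart, and it cannot exceed $|s|$ by pathwise stability. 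Constancy of the distance between $\mathbf{z}(t)$ and $\mathbf{z}'(t)$, combined with the fact that their difference always has $\mathbf{b}$-component exactly $s\mathbf{b}$, forces $\mathbf{z}'(t)-\mathbf{z}(t)=s\mathbf{b}$ for all $t$ (a vector of fixed length whose projection onto a line equals the whole length must be that projection). Differentiating gives $\mathbf{f}(\mathbf{z}+s\mathbf{b})=\mathbf{f}(\mathbf{z})$ for all $\mathbf{z}$ and all $s$, i.e. $\mathbf{f}$ is translation-invariant in the $\mathbf{b}$-direction.

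Finally I would translate invariance of $\mathbf{f}=(\varphi_x,-\varphi_y)^T$ back into invariance of $\varphi$. Writing $\mathbf{b}=(b_1,b_2)\in\mathbb{R}^n\times\mathbb{R}^m$, the directional derivative $\frac{d}{ds}\varphi(\mathbf{z}+s\mathbf{b}) = \varphi_x^T b_1 - (-\varphi_y)^T b_2 \cdot(-1)\dots$ — one has to be slightly careful about the sign flip in $\mathbf{f}$, but the point is that $\frac{d}{ds}\varphi(\mathbf{z}+s\mathbf{b})$ is itself a function whose gradient in $\mathbf{z}$ can be computed from $\mathbf{f}$, and the invariance $\mathbf{f}(\mathbf{z}+s\mathbf{b})=\mathbf{f}(\mathbf{z})$ shows this directional derivative is constant along the $\mathbf{b}$-direction. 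Evaluating at a saddle point $\mathbf{\bar z}\in L$, where $\varphi_x=\varphi_y=0$, pins that constant: for $\mathbf{z}$ on the line $L$ itself all of $L$ consists of saddle points so $\varphi$ is constant along $L$; extending, one gets $\frac{d}{ds}\varphi(\mathbf{z}+s\mathbf{b})$ independent of $s$, and a second integration together with the value on $L$ yields $\varphi(\mathbf{z}+s\mathbf{b})=\varphi(\mathbf{z})$ for all $\mathbf{z}$, $s$.

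The main obstacle I anticipate is the middle step: rigorously upgrading "distance non-increasing" (pathwise stability, which only gives a one-sided bound in forward time) to "distance exactly constant and difference exactly $s\mathbf{b}$." Pathwise stability alone does not give a lower bound on the distance; the lower bound must come from the geometric confinement to parallel normal hyperplanes, and one must be careful that this confinement genuinely holds (it relies on $L$ being a \emph{bi-infinite} line of saddle points, so that the intersection of the confining balls really collapses onto a hyperplane rather than a larger slab). Once the difference vector is shown to be the constant $s\mathbf{b}$, the passage to $\mathbf{f}$ and then to $\varphi$ is routine differentiation and integration.
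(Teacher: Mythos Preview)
Your proposal is correct and follows essentially the same three-step approach as the paper's proof: (i) confine trajectories to hyperplanes normal to $L$ by intersecting shrinking balls centred on saddle points along $L$; (ii) use pathwise stability for the upper bound and hyperplane confinement for the lower bound to conclude that the trajectory from $\mathbf{z}+s\mathbf{b}$ is the $s\mathbf{b}$-translate of the trajectory from $\mathbf{z}$, hence $\mathbf{f}$ is translation invariant; (iii) pass from invariance of $\mathbf{f}$ (equivalently of $\nabla\varphi$) to invariance of $\varphi$ by fixing the additive constant using the fact that $\varphi$ is constant on $L$. Your step (ii) is in fact slightly more explicit than the paper, which simply asserts that the constant distance ``uniquely identifies the motion''; your decomposition of the difference vector into its $\mathbf{b}$-component (fixed at $s\mathbf{b}$) and its $\mathbf{b}^\perp$-component (forced to vanish by the length constraint) makes this cleaner. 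Your step (iii) is somewhat garbled in the writing --- the clean route is simply that invariance of $\mathbf{f}$ gives invariance of $\nabla\varphi$, so $\varphi(\mathbf{z}+s\mathbf{b})-\varphi(\mathbf{z})$ is constant in $\mathbf{z}$, and evaluating at $\mathbf{z}=\mathbf{a}\in L$ (where $\varphi$ takes the common saddle value along $L$) gives that constant as zero --- but this is exactly the paper's ``$\varphi$ is defined up to an additive constant on each linear manifold'' and ``$\varphi$ is constant on $L$''.
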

\begin{proof}
We do this in two steps. First we will prove that the motion of the gradient method is restricted to linear manifolds normal to $L$. Let $\mathbf{z}$ be a point and consider the motion of the gradient method starting from $\mathbf{z}$. As illustrated in \autoref{fig:twocircles-shaded} we pick two saddle points $\mathbf{\bar{a}},\mathbf{\bar{b}}$ on $L$, then by \autoref{incrementally-stable-fullspace} the motion starting from $\mathbf{z}$ is constrained to lie in the (shaded)
region, which is the intersection of the two closed balls about $\mathbf{\bar{a}}$ and $\mathbf{\bar{b}}$ which have $\mathbf{z}$ on their boundaries. The intersection of
\icl{all such regions generated by a sequence of pairs of saddle points on $L$ each tending to infinity on opposite directions along $L$, is contained in the linear manifold normal to $L$ that contains $\mathbf{z}$.}

Next we claim that for $s\in\mathbb{R}$ the motion starting from $\mathbf{z}+s\mathbf{b}$ is exactly the motion starting from $\mathbf{z}$ shifted by $s\mathbf{b}$. As illustrated in \autoref{fig:twoplanes}, by \autoref{incrementally-stable-fullspace} the motion from $\mathbf{z}+s\mathbf{b}$ must stay a constant distance $s|\mathbf{b}|$ from the motion from $\mathbf{z}$. This uniquely identifies the motion from $\mathbf{z}+s\mathbf{b}$ and proves the claim.
Finally we deduce the full result by noting that the second claim  implies that $\varphi$ is defined up to an additive constant on each linear manifold as the motion of the gradient method contains all the information about the derivatives of $\varphi$. As $\varphi$ is constant on $L$, the proof is complete.
\end{proof}
We now use these techniques to prove orthogonality results about solutions in $\mathcal{S}$.
\begin{lemma}\label{S-orthogonal-to-barS-fullspace}
Let $\varphi\in C^2$ be concave-convex on $\mathbb{R}^{n+m}$, and $\mathbf{z}$ be a trajectory in $\mathcal{S}$, then $\mathbf{z}(t)\in M_{\bar{\mathcal{S}}}(\mathbf{z}(0))$ for all $t\in\mathbb{R}$, \icl{where $M_{\bar{\mathcal{S}}}(\mathbf{z}(0))$ denotes the manifold defined in \eqref{eq:orth_lin_man}.}
\end{lemma}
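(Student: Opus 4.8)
The plan is to use the defining property of $\mathcal{S}$ directly. If $\mathbf{z}(\cdot)\in\mathcal{S}$, then by definition $d(\mathbf{z}(t),\mathbf{\bar{z}})$ is constant in $t$ for \emph{every} saddle point $\mathbf{\bar{z}}\in\bar{\mathcal{S}}$ (this is the point where membership in $\mathcal{S}$, rather than merely in $\mathcal{S}_{\mathbf{\bar{z}}}$ for a single $\mathbf{\bar{z}}$, is essential). Fix two arbitrary saddle points $\mathbf{u},\mathbf{u}'\in\bar{\mathcal{S}}$ and expand the two time-constant quantities $|\mathbf{z}(t)-\mathbf{u}|^2$ and $|\mathbf{z}(t)-\mathbf{u}'|^2$. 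Subtracting them, the $|\mathbf{z}(t)|^2$ terms cancel and one is left with $-2\,\mathbf{z}(t)^T(\mathbf{u}-\mathbf{u}')+(|\mathbf{u}|^2-|\mathbf{u}'|^2)$, which must therefore be independent of $t$. Hence $\mathbf{z}(t)^T(\mathbf{u}-\mathbf{u}')$ is constant in $t$, so $(\mathbf{z}(t)-\mathbf{z}(0))^T(\mathbf{u}-\mathbf{u}')=0$ for all $t\in\mathbb{R}$.

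Since $\mathbf{u},\mathbf{u}'$ were arbitrary elements of $\bar{\mathcal{S}}$, the displacement $\mathbf{z}(t)-\mathbf{z}(0)$ is orthogonal to every vector of the form $\mathbf{u}-\mathbf{u}'$ with $\mathbf{u},\mathbf{u}'\in\bar{\mathcal{S}}$, and therefore to their span. By the definition $M_{\bar{\mathcal{S}}}(\mathbf{z}(0))=\mathbf{z}(0)+\Span(\{\mathbf{u}-\mathbf{u}':\mathbf{u},\mathbf{u}'\in\bar{\mathcal{S}}\})^\perp$, this says precisely that $\mathbf{z}(t)\in M_{\bar{\mathcal{S}}}(\mathbf{z}(0))$, as required. (The degenerate case $\bar{\mathcal{S}}=\emptyset$ is vacuous, since then $M_{\bar{\mathcal{S}}}(\mathbf{z}(0))=\mathbb{R}^{n+m}$.)

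There is essentially no analytic obstacle here; the lemma is a short algebraic consequence of the constancy of all the saddle-point distances. The only things to be careful about are that the constancy is invoked simultaneously for all saddle points, and that it is used for all $t\in\mathbb{R}$ — which is legitimate because \eqref{gradmethod-fullspace} generates a genuine flow, so trajectories are defined and the distance is constant for negative times as well. Equivalently, one may phrase the computation infinitesimally: $\tfrac{d}{dt}|\mathbf{z}(t)-\mathbf{\bar{z}}|^2=2\,\dot{\mathbf{z}}(t)^T(\mathbf{z}(t)-\mathbf{\bar{z}})=0$ for each $\mathbf{\bar{z}}\in\bar{\mathcal{S}}$, whence $\dot{\mathbf{z}}(t)\perp(\mathbf{u}-\mathbf{u}')$ for all $\mathbf{u},\mathbf{u}'\in\bar{\mathcal{S}}$, and integrating in $t$ gives the stated inclusion.
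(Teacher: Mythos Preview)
Your proof is correct and is essentially the same argument as the paper's: the paper observes geometrically that $\mathbf{z}(t)$ lies on the intersection of the two spheres about any pair of saddle points $\mathbf{\bar{a}},\mathbf{\bar{b}}$, and that this intersection sits in the hyperplane through $\mathbf{z}(0)$ orthogonal to $\mathbf{\bar{a}}-\mathbf{\bar{b}}$. Your subtraction of the two squared distances is exactly the algebra underlying that geometric statement.
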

\begin{proof}
If $\bar{\mathcal{S}}=\{\mathbf{\bar{z}}\}$ or %
$\emptyset$ %
the claim is trivial. Otherwise we let $\mathbf{\bar{a}}\ne\mathbf{\bar{b}}\in\bar{\mathcal{S}}$ be arbitrary, and consider the spheres \icl{centred at $\mathbf{\bar{a}}$ and $\mathbf{\bar{b}}$, respectively, that 
each have the point $\mathbf{z}(t)$ on their boundary}. By \autoref{incrementally-stable-fullspace}, $\mathbf{z}(t)$ is constrained to lie on the intersection of these two spheres which lies inside $M_{L}(\mathbf{z}(0))$ where $L$ is the line segment between $\mathbf{\bar{a}}$ and $\mathbf{\bar{b}}$. As $\mathbf{\bar{a}}$ and $\mathbf{\bar{b}}$ were arbitrary this proves the lemma.
\end{proof}
\begin{lemma}\label{Orthogonal-and-constant-distance-from-one-implies-in-S}
Let $\varphi$ be $C^2$ and concave-convex on $\mathbb{R}^{n+m}$, $\mathbf{\bar{z}}\in\bar{\mathcal{S}}$ and $\mathbf{z}(t)\in\mathcal{S}_{\mathbf{\bar{z}}}$ lie in $M_{\bar{\mathcal{S}}}(\mathbf{z}(0))$ for all $t$. Then $\mathbf{z}(t)\in\mathcal{S}$.
\end{lemma}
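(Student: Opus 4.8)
The plan is to upgrade the single‑saddle statement $\mathbf{z}\in\mathcal{S}_{\mathbf{\bar{z}}}$ to the all‑saddles statement $\mathbf{z}\in\mathcal{S}$ by a short computation with squared distances, using the explicit description $M_{\bar{\mathcal{S}}}(\mathbf{z}(0))=\mathbf{z}(0)+\Span(\{\mathbf{u}-\mathbf{u}':\mathbf{u},\mathbf{u}'\in\bar{\mathcal{S}}\})^\perp$, which is available since $\bar{\mathcal{S}}$ is closed and convex by \autoref{convexity-of-saddles-fullspace}. Since $\mathbf{z}(t)$ is already a solution of \eqref{gradmethod-fullspace} (being in $\mathcal{S}_{\mathbf{\bar{z}}}$), the only thing that remains to be shown is that $|\mathbf{z}(t)-\mathbf{\bar{w}}|$ is independent of $t$ for every $\mathbf{\bar{w}}\in\bar{\mathcal{S}}$. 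The case $\bar{\mathcal{S}}=\{\mathbf{\bar{z}}\}$ is immediate, so assume $\bar{\mathcal{S}}$ contains more than one point.

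First I would read off from the hypothesis $\mathbf{z}(t)\in M_{\bar{\mathcal{S}}}(\mathbf{z}(0))$ that the displacement $\mathbf{z}(t)-\mathbf{z}(0)$ lies in $\Span(\{\mathbf{u}-\mathbf{u}':\mathbf{u},\mathbf{u}'\in\bar{\mathcal{S}}\})^\perp$; in particular, for every $\mathbf{\bar{w}}\in\bar{\mathcal{S}}$ the vector $\mathbf{\bar{z}}-\mathbf{\bar{w}}$ is one of the spanning difference vectors, so $(\mathbf{z}(t)-\mathbf{z}(0))^T(\mathbf{\bar{z}}-\mathbf{\bar{w}})=0$ for all $t$. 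Then I would expand, for fixed $\mathbf{\bar{w}}\in\bar{\mathcal{S}}$,
\[
|\mathbf{z}(t)-\mathbf{\bar{w}}|^2=|\mathbf{z}(t)-\mathbf{\bar{z}}|^2+2(\mathbf{z}(t)-\mathbf{\bar{z}})^T(\mathbf{\bar{z}}-\mathbf{\bar{w}})+|\mathbf{\bar{z}}-\mathbf{\bar{w}}|^2 .
\]
The first term is constant in $t$ because $\mathbf{z}\in\mathcal{S}_{\mathbf{\bar{z}}}$, and the last term plainly does not depend on $t$. For the middle term, writing $\mathbf{z}(t)-\mathbf{\bar{z}}=(\mathbf{z}(t)-\mathbf{z}(0))+(\mathbf{z}(0)-\mathbf{\bar{z}})$ and applying the orthogonality relation above, it collapses to $2(\mathbf{z}(0)-\mathbf{\bar{z}})^T(\mathbf{\bar{z}}-\mathbf{\bar{w}})$, which is again independent of $t$. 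Hence $|\mathbf{z}(t)-\mathbf{\bar{w}}|$ is constant; since $\mathbf{\bar{w}}$ was arbitrary and $\mathbf{z}$ solves \eqref{gradmethod-fullspace}, this gives $\mathbf{z}\in\mathcal{S}$.

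I do not expect a genuine obstacle here: the argument is essentially Pythagorean bookkeeping. The one point requiring a little care is that it is the \emph{displacement} $\mathbf{z}(t)-\mathbf{z}(0)$, not $\mathbf{z}(t)$ itself, that is orthogonal to the saddle‑point differences; this is why the base point $\mathbf{z}(0)$ (equivalently $\mathbf{\bar{z}}$) must be reinserted when handling the cross term, and why the constancy of $|\mathbf{z}(t)-\mathbf{\bar{z}}|$ coming from $\mathcal{S}_{\mathbf{\bar{z}}}$ is actually used rather than redundant.
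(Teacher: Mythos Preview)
Your argument is correct and is essentially the same Pythagorean computation as the paper's proof: the paper phrases it geometrically by introducing the foot of the perpendicular $\mathbf{b}$ from the line through $\mathbf{\bar{z}}$ and the second saddle onto $M_{\bar{\mathcal{S}}}(\mathbf{z}(0))$ and applying Pythagoras twice, whereas you expand $|\mathbf{z}(t)-\mathbf{\bar{w}}|^2$ directly and use the orthogonality of the displacement $\mathbf{z}(t)-\mathbf{z}(0)$ to $\mathbf{\bar{z}}-\mathbf{\bar{w}}$. The content is identical; your version is slightly more streamlined in that it avoids introducing the auxiliary point $\mathbf{b}$ (and incidentally does not actually need the convexity of $\bar{\mathcal{S}}$ you invoke, since $\mathbf{\bar{z}}-\mathbf{\bar{w}}$ is already one of the spanning differences).
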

\begin{proof}
If $\bar{\mathcal{S}}=\{\mathbf{\bar{z}}\}$ the claim is trivial. Let $\mathbf{\bar{a}}\in\bar{\mathcal{S}}\setminus\{\mathbf{\bar{z}}\}$ be arbitrary. Then by \autoref{convexity-of-saddles-fullspace} the line segment $L$ between $\mathbf{\bar{a}}$ and $\mathbf{\bar{z}}$ lies in $\bar{\mathcal{S}}$. Let $\mathbf{b}$ be the \icl{ point of intersection between the line that includes the line segment $L$}, and $M_{\bar{\mathcal{S}}}(\mathbf{z}(0))$. Then the definition of $M_{\bar{\mathcal{S}}}(\mathbf{z}(0))$ tells us that the \icl{this line} 
meets $M_{\bar{\mathcal{S}}}(\mathbf{z}(0))$ at a right \icl{angles}. $d(\mathbf{b},\mathbf{\bar{z}})$ is constant and $d(\mathbf{z}(t),\mathbf{\bar{z}})$ \icl{is constant} as $\mathbf{z}(t)\in\mathcal{S}$, which implies that $d(\mathbf{z}(t),\mathbf{\bar{a}})$ is also constant (as illustrated in \autoref{fig:orthogonality}). Indeed, we have
\begin{equation}
\begin{aligned}
d(\mathbf{z}(t),\mathbf{\bar{a}})^2&=d(\mathbf{z}(t),\mathbf{b})^2+d(\mathbf{b},\mathbf{\bar{a}})^2 \\
&=d(\mathbf{z}(t),\mathbf{\bar{z}})^2-d(\mathbf{b},\mathbf{\bar{z}})^2+d(\mathbf{b},\mathbf{\bar{a}})^2
\end{aligned}
\end{equation}
and all the terms on the right hand side are constant.
\end{proof}

\begin{figure}[h]
\centering
\begin{tikzpicture}[scale=0.6,anchor=mid,>=latex',line join=bevel,]
\coordinate [label=left:\textcolor{blue}{$\mathbf{b}$}] (b) at (0,0);
\coordinate [label=below:\textcolor{blue}{$\mathbf{\bar{a}}$}] (abar) at (1,0);
\coordinate [label=below:\textcolor{blue}{$\mathbf{\bar{z}}$}] (zbar) at (3,0);
\coordinate [label=below:{$L$}] (L) at ($(abar) ! 0.5 ! (zbar)$);
\coordinate [label=left:\textcolor{blue}{$\mathbf{z}$}] (z) at (0,2);
\fill (b) circle (3pt);
\fill (abar) circle (3pt);
\fill (zbar) circle (3pt);
\fill (z) circle (3pt);

\draw [dashed] (b) -- (abar);
\draw (abar) -- (zbar);

\draw (z) -- (zbar);
\draw (z) -- (abar);
\draw [dashed] [<->] ($(b) ! 1.7 ! (z)$) -- ($ (z) ! 2.5 ! (b) $);
\coordinate [label=right:{$M_{\bar{\mathcal{S}}}(\mathbf{z})$}] (M) at ($(b) ! 1.5 ! (z)$);
\draw [right angle symbol={z}{b}{zbar}];
\end{tikzpicture}
\caption{$\mathbf{\bar{a}}$ and $\mathbf{\bar{z}}$ are saddle points of $\varphi$ which is $C^2$ and concave-convex on $\mathbb{R}^{n+m}$, and $L$ is the line segment between them. $\mathbf{z}$ is a point on a solution in $\mathcal{S}_{\mathbf{\bar{z}}}$ which lies on $M_{\mathcal{\bar{S}}}(\mathbf{z})$ which is orthogonal to $L$ by definition. $\mathbf{b}$ is the point of intersection between $M_{\bar{\mathcal{S}}}(\mathbf{z})$ and the extension of $L$.%
}\label{fig:orthogonality}
\end{figure}
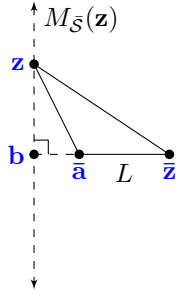
Using these orthogonality results we prove the key result of the section, a convexity result between $\mathcal{S}_{\mathbf{\bar{z}}}$ and $\mathbf{\bar{z}}$.
\begin{lemma}\label{combined-convexity-of-S-and-bar-S-fullspace}
Let $\varphi$ be $C^2$ and concave-convex on $\mathbb{R}^{n+m}$, $\mathbf{\bar{z}}\in\bar{\mathcal{S}}$ and $\mathbf{z}(t)\in\mathcal{S}_{\mathbf{\bar{z}}}$. Then for any $s\in[0,1]$, the convex combination $\mathbf{z}'(t)=(1-s)\mathbf{\bar{z}}+s\mathbf{z}(t)$ lies in $\mathcal{S}_{\mathbf{\bar{z}}}$.  If in addition $\mathbf{z}\in\mathcal{S}$, then $\mathbf{z'}(t)\in\mathcal{S}$.
\end{lemma}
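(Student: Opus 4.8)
The plan is to reduce, via a translation of coordinates, to the case $\mathbf{\bar z}=\mathbf 0$, so that $\mathbf z'(t)=(1-s)\mathbf{\bar z}+s\mathbf z(t)$ becomes simply $\mathbf z'(t)=s\,\mathbf z(t)$. Since $\mathbf z(t)\in\mathcal S_{\mathbf{\bar z}}$, the radius $R:=|\mathbf z(t)|$ is constant in $t$, so $|\mathbf z'(t)|=sR$ is automatically constant; hence the whole problem is to show that $t\mapsto s\,\mathbf z(t)$ is itself a trajectory of the gradient method \eqref{gradmethod-fullspace}. The degenerate case $R=0$ (where $\mathbf z\equiv\mathbf 0$) is trivial and can be dispatched first.

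The heart of the argument is a rigidity step built on pathwise stability. Fix $t_0\in\mathbb R$ and let $\mathbf w(t)=\phi(t-t_0,\,s\mathbf z(t_0))$ be the trajectory of \eqref{gradmethod-fullspace} issued from $s\mathbf z(t_0)$ at time $t_0$. Comparing $\mathbf w$ with the constant trajectory at the equilibrium $\mathbf 0$, \autoref{incrementally-stable-fullspace} gives $|\mathbf w(t)|\le|s\mathbf z(t_0)|=sR$ for $t\ge t_0$; comparing $\mathbf w$ with $\mathbf z$, it gives $d(\mathbf w(t),\mathbf z(t))\le d(s\mathbf z(t_0),\mathbf z(t_0))=(1-s)R$ for $t\ge t_0$. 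The triangle inequality then forces
\[
R=|\mathbf z(t)|\le|\mathbf w(t)|+|\mathbf z(t)-\mathbf w(t)|\le sR+(1-s)R=R,
\]
so every inequality is an equality, and the equality case pins $\mathbf w(t)$ to be the point of the segment $[\mathbf 0,\mathbf z(t)]$ at distance $sR$ from $\mathbf 0$, namely $\mathbf w(t)=s\mathbf z(t)$. Thus $s\mathbf z(t)=\phi(t-t_0,s\mathbf z(t_0))$ for all $t\ge t_0$ and all $t_0$; feeding this into the flow axioms shows that $t\mapsto s\mathbf z(t)$ is a full orbit of the gradient method, defined for all $t\in\mathbb R$. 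Translating back, this is exactly $\mathbf z'\in\mathcal S_{\mathbf{\bar z}}$.

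For the second assertion I would combine this with the two orthogonality lemmas. Assuming in addition $\mathbf z\in\mathcal S$, \autoref{S-orthogonal-to-barS-fullspace} gives $\mathbf z(t)\in M_{\bar{\mathcal S}}(\mathbf z(0))$, i.e. $\mathbf z(t)-\mathbf z(0)$ is orthogonal to every difference $\mathbf u-\mathbf u'$ with $\mathbf u,\mathbf u'\in\bar{\mathcal S}$. Since $\mathbf z'(t)-\mathbf z'(0)=s\bigl(\mathbf z(t)-\mathbf z(0)\bigr)$, the same orthogonality holds for $\mathbf z'$, so $\mathbf z'(t)\in M_{\bar{\mathcal S}}(\mathbf z'(0))$ for all $t$. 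As $\mathbf z'\in\mathcal S_{\mathbf{\bar z}}$ has just been established, \autoref{Orthogonal-and-constant-distance-from-one-implies-in-S} applies and yields $\mathbf z'\in\mathcal S$.

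The step I expect to be the main obstacle is the rigidity identity $\mathbf w(t)=s\mathbf z(t)$: it is here that pathwise stability is used in an essential way, and one must be careful that the two distance bounds hold over the same time interval and that the equality case of the triangle inequality is invoked correctly. Everything else — the coordinate translation, the passage from "orbit on $[t_0,\infty)$ for every $t_0$" to "orbit on all of $\mathbb R$" via the flow property, and the orthogonality bookkeeping in the second part — is routine.
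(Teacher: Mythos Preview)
Your proposal is correct and follows essentially the same approach as the paper: both arguments use pathwise stability to trap the flow-trajectory started at the convex combination between the two balls of radii $sR$ about $\mathbf{\bar z}$ and $(1-s)R$ about $\mathbf z(t)$, then observe that these balls meet in the single point $s\mathbf z(t)+(1-s)\mathbf{\bar z}$ (you phrase this via the equality case of the triangle inequality, the paper states it geometrically). The second part is handled identically in both, via \autoref{S-orthogonal-to-barS-fullspace} and \autoref{Orthogonal-and-constant-distance-from-one-implies-in-S}; your explicit use of an arbitrary starting time $t_0$ to cover all of $\mathbb R$ is a small extra bit of care that the paper leaves implicit.
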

\begin{proof}
Clearly $\mathbf{z'}$ is a constant distance from $\mathbf{\bar{z}}$. We must show that $\mathbf{z}'(t)$ is also a solution to \eqref{gradmethod-fullspace}. We argue in a similar way to \autoref{fig:twoplanes} but with spheres instead of planes. %
Let the solution to \eqref{gradmethod-fullspace} starting at $\mathbf{z}'(0)$ be denoted $\mathbf{z}''(t)$. We must show this is equal to $\mathbf{z}'(t)$. %
As $\mathbf{z}(t)\in\mathcal{S}$ it lies on a sphere about $\mathbf{\bar{z}}$, say of radius $r$, and by construction $\mathbf{z}'(0)$ lies on a smaller sphere about $\mathbf{\bar{z}}$ of radius $rs$. By \autoref{incrementally-stable-fullspace}, $d(\mathbf{z}(t),\mathbf{z}''(t))$ and $d(\mathbf{z}''(t),\mathbf{\bar{z}})$ are non-increasing, so that $\mathbf{z}''(t)$ must be within $rs$ of $\mathbf{\bar{z}}$ and within $r(1-s)$ of $\mathbf{z}(t)$. The only such point is $\mathbf{z}'(t)=(1-s)\mathbf{\bar{z}}+s\mathbf{z}(t)$ which proves the claim.
For the additional statement, we consider another saddle point $\mathbf{\bar{a}}\in\bar{\mathcal{S}}$ and let $L$ be the line segment connecting $\mathbf{\bar{a}}$ and $\mathbf{\bar{z}}$. By \autoref{S-orthogonal-to-barS-fullspace}, $\mathbf{z}(t)$ lies in $M_{\bar{\mathcal{S}}}(\mathbf{z}(0))$, so by construction, $\mathbf{z'}(t)\in M_{\bar{\mathcal{S}}}(\mathbf{z'}(0))$, (as illustrated by \autoref{fig:rescaled-orthogonality}). Hence, by \autoref{Orthogonal-and-constant-distance-from-one-implies-in-S}, $\mathbf{z'}(t)\in\mathcal{S}$.
\end{proof}
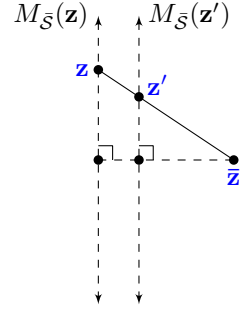
\begin{figure}[h]
\centering
\begin{tikzpicture}[scale=0.6,anchor=mid,>=latex',line join=bevel,]
\coordinate (b) at (0,0);
\coordinate [label=below:\textcolor{blue}{$\mathbf{\bar{z}}$}] (zbar) at (3,0);
\coordinate [label=left:\textcolor{blue}{$\mathbf{z}$}] (z) at (0,2);
\coordinate [label={[xshift=0.25cm, yshift=-0.1cm]\textcolor{blue}{$\mathbf{z'}$}}] (zprime) at ($(z) ! 0.3 ! (zbar)$);
\coordinate (bprime) at ($(b)!(zprime)!(zbar)$);
\coordinate [label=left:{$M_{\bar{\mathcal{S}}}(\mathbf{z})$}] (M) at ($(b) ! 1.6 ! (z)$);
\coordinate [label=right:{$M_{\bar{\mathcal{S}}}(\mathbf{z'})$}] (Mprime) at ($(bprime)!(M)!(zprime)$);
\fill (b) circle (3pt);
\fill (zbar) circle (3pt);
\fill (z) circle (3pt);
\fill (zprime) circle (3pt);
\fill (bprime) circle (3pt);

\draw [dashed] (b) -- (zbar);

\draw (z) -- (zbar);
\draw [dashed] [<->] (Mprime) -- ($(Mprime) ! 2 ! (bprime)$);
\draw [dashed] [<->] (M) -- ($(M) ! 2 ! (b)$);
\draw [right angle symbol={z}{b}{zbar}];
\draw [right angle symbol={zprime}{bprime}{zbar}];
\end{tikzpicture}
\caption{$\mathbf{\bar{z}}$ is a saddle point of $\varphi$ which is $C^2$ and concave-convex on $\mathbb{R}^{n+m}$. $\mathbf{z}$ is a point on a solution in $\mathcal{S}$ and $\mathbf{z'}$ is a convex combination of $\mathbf{z}$ and $\mathbf{\bar{z}}$. $M_{\bar{\mathcal{S}}}(\mathbf{z})$ and $M_{\bar{\mathcal{S}}}(\mathbf{z'})$ are parallel to each other by definition. 
}\label{fig:rescaled-orthogonality}
\end{figure}

\icl{In \appendixref{sec:Classification-of-S} we
also prove that the set $\mathcal{S}$ is convex (stated as \autoref{S-is-convex}).}

\section{Classification of \texorpdfstring{$\mathcal{S}$}{S}}\label{sec:Classification-of-S}
We will now proceed with a full classification of $\mathcal{S}$ and prove Theorems \ref{Classification-in-terms-of-B(z)z}-\ref{Classification-of-S-in-relaxed-linear-constraints-case}. For notational convenience we will make the assumption (without loss of generality) that $\mathbf{0}\in\bar{\mathcal{S}}$. Then we compute $\varphi_x(\mathbf{z}),\varphi_y(\mathbf{z})$ from line integrals from $\mathbf{0}$ to $\mathbf{z}$. Indeed, letting $\mathbf{\hat{z}}$ be a unit vector parallel to $\mathbf{z}$, we have
\begin{equation}\label{line-integral-form-of-time-derivative}
\!\!\!\!\!\begin{bmatrix}
   \varphi_x(\mathbf{z})\\
   -\varphi_y(\mathbf{z})
  \end{bmatrix}
=\left(\int_0^{|\mathbf{z}|}\begin{bmatrix}
\varphi_{xx}(s\mathbf{\hat{z}}) &\!\! \varphi_{xy}(s\mathbf{\hat{z}})\\
-\varphi_{yx}(s\mathbf{\hat{z}}) &\!\! -\varphi_{yy}(s\mathbf{\hat{z}})
\end{bmatrix}
ds
\right)\mathbf{\hat{z}}.
\end{equation}
Together with the definition of the matrices $\mathbf{A}(\mathbf{z})$ and $\mathbf{B}(\mathbf{z})$ given by \eqref{def-of-A-and-B} we obtain
\begin{equation}\label{eq:line-integral-form-of-derivative-without-B'}
  \begin{bmatrix}
   \varphi_x(\mathbf{z})\\
   -\varphi_y(\mathbf{z})
  \end{bmatrix}
  =\int^{|\mathbf{z}|}_0(\mathbf{A}(s\mathbf{\hat{z}})+\mathbf{B}(s\mathbf{\hat{z}}))\mathbf{\hat{z}}\,ds.
\end{equation}
We are now ready to prove the first main result.
\begin{proof}[Proof of \autoref{Classification-in-terms-of-B(z)z}]
Define the set $\mathcal{X}$ as solutions of the ODE \eqref{eq:limiting-ODE-fullspace} which obey the condition \eqref{eq:first-thm-kernel-condition} for all $t\in\mathbb{R}$ and $r\in[0,1]$. Then \autoref{Classification-in-terms-of-B(z)z} is the statement that $\mathcal{X}=\mathcal{S}$. For brevity we define the matrix $\mathbf{B}'(\mathbf{z})$ by
\begin{equation}\label{eq:B'}
\mathbf{B'}(\mathbf{z})=\mathbf{B}(\mathbf{z})+(\mathbf{A}(\mathbf{z})-\mathbf{A}(\mathbf{0})).
\end{equation}
As $\mathbf{A}(\mathbf{z})$ is skew symmetric and $\mathbf{B}(\mathbf{z})$ is symmetric we have
$\ker(\mathbf{B'}(\mathbf{z}))\hspace{-.2mm}=\hspace{-.1mm}\ker(\mathbf{B}(\mathbf{z}))\cap\ker(\mathbf{A}(\mathbf{z})\hspace{-.1mm}-\hspace{-.1mm}\mathbf{A}(\mathbf{0}))$, so that condition \eqref{eq:first-thm-kernel-condition} is equivalent to
\begin{equation}
\mathbf{z}(t)\in \ker(\mathbf{B}'(r\mathbf{z}(t)))\text{   for all }t\in\mathbb{R}, r\in[0,1].
\end{equation}

We will prove that $\mathcal{X}\subseteq\mathcal{S}_{\mathbf{0}}$, $\mathcal{X}\subseteq\mathcal{S}$ and $\mathcal{S}_{\mathbf{0}}\subseteq\mathcal{X}$. As the other inclusion $\mathcal{S}\subseteq\mathcal{S}_{\mathbf{0}}$ is clear this will prove the theorem.

\textbf{Step 1: $\mathcal{X}\subseteq\mathcal{S}_{\mathbf{0}}$.} For any non-zero point $\mathbf{z}$ we can compute the partial derivatives of $\varphi$ at $\mathbf{z}$ using the line integral formula \eqref{eq:line-integral-form-of-derivative-without-B'} and \eqref{eq:B'},
\begin{equation}\label{line-integral-form-of-derivative-with-A-and-B}
 \begin{aligned}
  \begin{bmatrix}
   \varphi_x(\mathbf{z})\\
   -\varphi_y(\mathbf{z})
  \end{bmatrix}
&=\mathbf{A}(\mathbf{0})\mathbf{z}+\int^{|\mathbf{z}|}_0\mathbf{B'}(s\mathbf{\hat{z}})\mathbf{\hat{z}}ds
\end{aligned}
\end{equation}
where $\mathbf{z}=|\mathbf{z}|\mathbf{\hat{z}}$. If $\mathbf{z}(t)\in\mathcal{X}$, then $\dot{\mathbf{z}}(t)=\mathbf{A}(\mathbf{0})\mathbf{z}(t)$, and by skew-symmetry of $\mathbf{A}(\mathbf{0})$, $|\mathbf{z}(t)|$ is constant, which means that $\mathbf{z}(t)$ is a constant distance from $\mathbf{0}$. Furthermore, the assumption that $\mathbf{z}(t)\in\ker(\mathbf{B}'(r\mathbf{z}(t)))$ for $r\in[0,1]$ implies that the integrand in \eqref{line-integral-form-of-derivative-with-A-and-B} vanishes, and $\mathbf{z}(t)$ is a solution of the gradient method.

\textbf{Step 2: $\mathcal{X}\subseteq\mathcal{S}$.} Let $\bar{\mathbf{z}}$ be arbitrary. Consider the function $t\mapsto d(\mathbf{z}(t),\mathbf{\bar{z}})^2$. By expanding in the orthonormal basis of eigenvectors of $\mathbf{A}(\mathbf{0})$ we observe that this function is a linear combination of continuous periodic functions. As, by \autoref{incrementally-stable-fullspace}, this function is also non-increasing, it must be constant. 

\textbf{Step 3: $\mathcal{S}_{\mathbf{0}}\subseteq\mathcal{X}$.} Let $\mathbf{z}(t)\in\mathcal{S}_{\mathbf{0}}$ and $R=|\mathbf{z}(t)|$ which is constant. For $r\in[0,R]$, define $\mathbf{z}(t;r)=(r/R)\mathbf{z}(t)$, so that $\mathbf{z}(t;0)=\mathbf{0}$ and $\mathbf{z}(t;R)=\mathbf{z}(t)$. Note that the corresponding unit vector $\mathbf{\hat{z}}(t;r)=\mathbf{\hat{z}}(t)$ does not depend on $r$. The convexity result \autoref{combined-convexity-of-S-and-bar-S-fullspace} implies that $\mathbf{z}(t;r)\in\mathcal{S}_{\mathbf{0}}$, and is a solution of the gradient method. We shall compute the time derivative of this in two ways. First, we use \eqref{gradmethod-fullspace} and \eqref{line-integral-form-of-derivative-with-A-and-B} to obtain,
\begin{equation}\label{eq:dotztr1}
\dot{\mathbf{z}}(t;r)=\mathbf{A}(\mathbf{0})\mathbf{z}(t;r)+\int^r_0\mathbf{B}'(s\mathbf{\hat{z}}(t))\mathbf{\hat{z}}(t)\,ds.
\end{equation}
Second, we use the explicit definition of $\mathbf{z}(t;r)$ in terms of $\mathbf{z}(t)$ to obtain,
\begin{equation}\label{eq:dotztr2}
\dot{\mathbf{z}}(t;r)=\frac rR\mathbf{A}(\mathbf{0})\mathbf{z}(t)+\frac rR\int^R_0\mathbf{B}'(s\mathbf{\hat{z}}(t))\mathbf{\hat{z}}(t)\,ds.
\end{equation}
Equating \eqref{eq:dotztr1} and \eqref{eq:dotztr2} we deduce that
\begin{equation}
\int^r_0\mathbf{B}'(s\mathbf{\hat{z}}(t))\mathbf{\hat{z}}(t)\,ds=\frac rR\int^R_0\mathbf{B}'(s\mathbf{\hat{z}}(t))\mathbf{\hat{z}}(t)\,ds.
\end{equation}
Differentiating with respect to $r$ we have,
\begin{equation}
\mathbf{B}'(r\mathbf{\hat{z}}(t))\mathbf{\hat{z}}(t)=\frac1R\int^R_0\mathbf{B}'(s\mathbf{\hat{z}}(t))\mathbf{\hat{z}}(t)\,ds.
\end{equation}
The right hand side of this is independent of $r$, which implies that the left hand side is also independent of $r$, and is thus equal to its value at $r=0$, so that
\begin{equation}\label{eq:B'rzhat=B0zhat}
\mathbf{B}'(r\mathbf{\hat{z}}(t))\mathbf{\hat{z}}(t)=\mathbf{B}'(\mathbf{0})\mathbf{\hat{z}}(t)=\mathbf{B}(\mathbf{0})\mathbf{\hat{z}}(t).
\end{equation}
Putting this back into our expression for $\dot{\mathbf{z}}$ we find that
\begin{equation}
\dot{\mathbf{z}}(t)=\mathbf{A}(\mathbf{0})\mathbf{z}(t)+\mathbf{B}(\mathbf{0})\mathbf{z}(t),
\end{equation}
but as $|\mathbf{z}(t)|$ is constant, $\mathbf{A}(\mathbf{0})$ skew symmetric, and $\mathbf{B}(\mathbf{0})$ symmetric, $\mathbf{B}(\mathbf{0})\mathbf{z}(t)$ must vanish, which, together with \eqref{eq:B'rzhat=B0zhat} shows that $\mathbf{z}(t)\in\mathcal{X}$.
\end{proof}
\icl{
}
\icl{The following Lemma follows directly from \autoref{Classification-in-terms-of-B(z)z} and will be used to prove \autoref{S-is-convex}, which states that the set $\mathcal{S}$ is convex.}
\begin{lemma}\label{solutions-in-S-are-a-constant-distance-apart}
Let $\varphi$ be $C^2$ and concave-convex on $\mathbb{R}^{n+m}$. Let $\mathbf{z}(t),\mathbf{z}'(t)\in\mathcal{S}$. Then $d(\mathbf{z}(t),\mathbf{z}'(t))$ is constant.
\end{lemma}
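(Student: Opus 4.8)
The plan is to reduce the statement to the explicit linear ODE of \autoref{Classification-in-terms-of-B(z)z} and then exploit a skew-symmetry hidden in that ODE. As in the statements of the main theorems, we may translate coordinates so that a saddle point lies at the origin, $\mathbf{0}\in\bar{\mathcal S}$ (this changes neither $\mathcal S$ nor any Euclidean distance; we take $\bar{\mathcal S}\neq\emptyset$, which is the case relevant to every result that invokes this lemma). Then \autoref{Classification-in-terms-of-B(z)z} tells us that any trajectory $\mathbf{z}(t)\in\mathcal S$ solves the \emph{linear} ODE $\dot{\mathbf{z}}=\mathbf{A}(\mathbf{0})\mathbf{z}$.

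The first key step is to record that $\mathbf{A}(\mathbf{z})$ is skew-symmetric for every $\mathbf{z}$: since $\varphi\in C^{2}$ the mixed Hessian blocks satisfy $\varphi_{yx}=\varphi_{xy}^{T}$, so from \eqref{def-of-A-and-B} we get $\mathbf{A}(\mathbf{z})^{T}=-\mathbf{A}(\mathbf{z})$. Consequently $e^{t\mathbf{A}(\mathbf{0})}$ is an orthogonal matrix, hence an isometry of $\mathbb{R}^{n+m}$, for every $t\in\mathbb{R}$. Then, given $\mathbf{z}(t),\mathbf{z}'(t)\in\mathcal S$, I would use linearity of the ODE: the difference $\mathbf{d}(t):=\mathbf{z}(t)-\mathbf{z}'(t)$ also solves $\dot{\mathbf{d}}=\mathbf{A}(\mathbf{0})\mathbf{d}$, so $\mathbf{d}(t)=e^{t\mathbf{A}(\mathbf{0})}\mathbf{d}(0)$ and therefore $d(\mathbf{z}(t),\mathbf{z}'(t))=|\mathbf{d}(t)|=|e^{t\mathbf{A}(\mathbf{0})}\mathbf{d}(0)|=|\mathbf{d}(0)|$ for all $t$, which is the claim.

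The point that needs care is the logical ordering, since this argument invokes \autoref{Classification-in-terms-of-B(z)z}: one must establish that theorem without appealing to the present lemma. If a self-contained derivation is preferred, the relevant analytic ingredient can be isolated directly. For two trajectories of the gradient method one has, using $\dot{\mathbf{z}}=\mathbf{f}(\mathbf{z})$ and the Jacobian identity $\mathbf{f}'=\mathbf{A}+\mathbf{B}$,
\begin{equation*}
\tfrac{d}{dt}\tfrac12|\mathbf{z}-\mathbf{z}'|^{2}=\langle\mathbf{f}(\mathbf{z})-\mathbf{f}(\mathbf{z}'),\mathbf{z}-\mathbf{z}'\rangle=\int_0^1\big\langle\mathbf{B}\big(\mathbf{z}'+r(\mathbf{z}-\mathbf{z}')\big)(\mathbf{z}-\mathbf{z}'),\,\mathbf{z}-\mathbf{z}'\big\rangle\,dr,
\end{equation*}
because the skew part $\mathbf{A}$ contributes nothing; each integrand is $\le0$ since $\mathbf{B}\preceq0$ (negative semi-definite) by concavity--convexity, which in particular re-proves \autoref{incrementally-stable-fullspace}. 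Applying the same monotonicity-along-segments reasoning to the pair $(\mathbf{z}(t),\mathbf{0})$, and using that $\mathbf{z}(t)\in\mathcal S$ stays a constant distance from the saddle point $\mathbf{0}$, forces $\mathbf{B}(s\mathbf{z}(t))\mathbf{z}(t)=0$ for all $s\in[0,1]$, and likewise for $\mathbf{z}'$.

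The main obstacle, in this second route, is to upgrade the vanishing of $\mathbf{B}$ along the segments \emph{out of} $\mathbf{0}$ to its vanishing along the segment joining $\mathbf{z}'(t)$ and $\mathbf{z}(t)$, which is exactly what is needed to make the last integrand above identically zero. That upgrade is precisely the information packaged in the kernel condition \eqref{eq:first-thm-kernel-condition}, so I expect the cleanest treatment is to prove this lemma jointly with, or as a corollary of, \autoref{Classification-in-terms-of-B(z)z}, with the skew-symmetry of $\mathbf{A}(\mathbf{0})$ doing the final work.
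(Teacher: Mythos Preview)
Your first approach is exactly the paper's proof: both trajectories solve $\dot{\mathbf z}=\mathbf A(\mathbf 0)\mathbf z$ by \autoref{Classification-in-terms-of-B(z)z}, so $\mathbf z(t)-\mathbf z'(t)=e^{t\mathbf A(\mathbf 0)}(\mathbf z(0)-\mathbf z'(0))$ has constant norm since $\mathbf A(\mathbf 0)$ is skew-symmetric. Your worry about logical ordering is well placed but unfounded here: the paper proves \autoref{Classification-in-terms-of-B(z)z} using only the convexity result \autoref{combined-convexity-of-S-and-bar-S-fullspace} (which does not invoke the present lemma), and then derives this lemma as a corollary---so your ``second route'' digression is unnecessary.
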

\begin{proof}
Using \autoref{Classification-in-terms-of-B(z)z} we have that $\mathbf{z}(t)-\mathbf{z}'(t)=e^{t\mathbf{A}(\mathbf{0})}(\mathbf{z}(0)-\mathbf{z}'(0))$ which has constant magnitude as $\mathbf{A}(\mathbf{0})$ is skew symmetric.
\end{proof}
\begin{proposition}\label{S-is-convex}
Let $\varphi$ be $C^2$ and concave-convex on $\mathbb{R}^{n+m}$, then $\mathcal{S}$ is convex.
\end{proposition}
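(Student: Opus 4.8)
The plan is to run exactly the argument used for \autoref{combined-convexity-of-S-and-bar-S-fullspace}, but with the fixed saddle point there replaced by a \emph{second} trajectory in $\mathcal{S}$. Fix $\mathbf{z}(t),\mathbf{z}'(t)\in\mathcal{S}$ and $s\in[0,1]$; the cases $s\in\{0,1\}$ and $\mathbf{z}(0)=\mathbf{z}'(0)$ (where uniqueness of solutions gives $\mathbf{z}=\mathbf{z}'$) are trivial, so I may assume $s\in(0,1)$ and set $r=d(\mathbf{z}(0),\mathbf{z}'(0))>0$. Write $\mathbf{w}(t)=(1-s)\mathbf{z}(t)+s\mathbf{z}'(t)$. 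To conclude $\mathbf{w}\in\mathcal{S}$ I must verify two things: that $\mathbf{w}$ is a solution of \eqref{gradmethod-fullspace}, and that it lies a constant distance from every saddle point of $\varphi$.

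For the first point I would argue by a squeeze, as in \autoref{fig:twoplanes} but with the two hyperplanes replaced by the moving points $\mathbf{z}(t)$ and $\mathbf{z}'(t)$. By \autoref{solutions-in-S-are-a-constant-distance-apart} we have $d(\mathbf{z}(t),\mathbf{z}'(t))=r$ for all $t$. Fix any $\tau\in\mathbb{R}$ and let $\mathbf{u}(t)$ be the solution of \eqref{gradmethod-fullspace} with $\mathbf{u}(\tau)=\mathbf{w}(\tau)$; since $\mathbf{w}(\tau)$ lies on the segment $[\mathbf{z}(\tau),\mathbf{z}'(\tau)]$ at distance $sr$ from $\mathbf{z}(\tau)$ and $(1-s)r$ from $\mathbf{z}'(\tau)$, pathwise stability (\autoref{incrementally-stable-fullspace}) gives $d(\mathbf{u}(t),\mathbf{z}(t))\le sr$ and $d(\mathbf{u}(t),\mathbf{z}'(t))\le(1-s)r$ for all $t\ge\tau$. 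The triangle inequality then forces $r=d(\mathbf{z}(t),\mathbf{z}'(t))\le d(\mathbf{u}(t),\mathbf{z}(t))+d(\mathbf{u}(t),\mathbf{z}'(t))\le r$, so both bounds are equalities and $\mathbf{u}(t)$ is the unique point of $[\mathbf{z}(t),\mathbf{z}'(t)]$ at distance $sr$ from $\mathbf{z}(t)$, namely $\mathbf{w}(t)$. Hence $\mathbf{w}(t)=\mathbf{u}(t)$ for all $t\ge\tau$; as $\tau$ was arbitrary this shows $\mathbf{w}$ solves \eqref{gradmethod-fullspace} on all of $\mathbb{R}$.

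For the second point I would take an arbitrary saddle point $\mathbf{\bar{a}}$ of $\varphi$ and apply the polarisation identity for a convex combination,
\begin{equation*}
|\mathbf{w}(t)-\mathbf{\bar{a}}|^2=(1-s)|\mathbf{z}(t)-\mathbf{\bar{a}}|^2+s|\mathbf{z}'(t)-\mathbf{\bar{a}}|^2-s(1-s)|\mathbf{z}(t)-\mathbf{z}'(t)|^2 .
\end{equation*}
The first two terms on the right are constant in $t$ because $\mathbf{z},\mathbf{z}'\in\mathcal{S}$, and the third is constant by \autoref{solutions-in-S-are-a-constant-distance-apart}; hence $d(\mathbf{w}(t),\mathbf{\bar{a}})$ is constant. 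Since $\mathbf{\bar{a}}$ was an arbitrary saddle point, $\mathbf{w}\in\mathcal{S}$, and $\mathcal{S}$ is convex.

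The argument is short and its only real content is the squeeze step, which is essentially a verbatim rerun of the geometry already used for \autoref{infinite-line-of-saddles} and \autoref{combined-convexity-of-S-and-bar-S-fullspace}. The genuine difficulty has been isolated into \autoref{solutions-in-S-are-a-constant-distance-apart} --- the fact that two solutions in $\mathcal{S}$ stay a constant, not merely non-increasing, distance apart --- which cannot be extracted from pathwise stability alone (it would only yield the wrong monotonicity for the last term above) and instead depends on the analytic classification of $\mathcal{S}$ carried out in the next subsection.
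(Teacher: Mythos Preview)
Your proof is correct and follows essentially the same approach as the paper: the two-ball squeeze via \autoref{incrementally-stable-fullspace} together with \autoref{solutions-in-S-are-a-constant-distance-apart} to show the convex combination is a solution, followed by an algebraic identity expressing $d(\mathbf{w}(t),\mathbf{\bar{a}})^2$ in terms of the three known constant distances. Your use of the polarisation identity is a slightly cleaner packaging of the computation the paper carries out explicitly after translating $\mathbf{\bar{a}}$ to the origin, but the content is identical.
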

\begin{proof}
The proof is very similar to that of \autoref{combined-convexity-of-S-and-bar-S-fullspace}.
Let $\mathbf{z}(t),\mathbf{z}'(t)\in\mathcal{S}$, and $s\in(0,1)$. Set $\mathbf{w}(t)=s\mathbf{z}(t)+(1-s)\mathbf{z}'(t)$. By \autoref{solutions-in-S-are-a-constant-distance-apart} we know that $d=d(\mathbf{z}(t),\mathbf{z}'(t))$ is constant. Denote the solution of the gradient method starting from $\mathbf{w}(0)$ as $\mathbf{w}'(t)$. We must prove that $\mathbf{w}'(t)=\mathbf{w}(t)$ and that $\mathbf{w}(t)\in\mathcal{S}$. First we imagine two closed balls centered on $\mathbf{z}(t)$ and $\mathbf{z}'(t)$ and of radii $sd$ and $(1-s)d$ respectively. By \autoref{incrementally-stable-fullspace}, $\mathbf{w}'(t)$ is constrained to lie within both of these balls. For each $t$ there is only one such point and it is exactly $\mathbf{w}(t)$. Next we let $\mathbf{\bar{a}}\in\bar{\mathcal{S}}$ be arbitrary, then $d(\mathbf{\bar{a}},\mathbf{w}(t))$ is determined by $d(\mathbf{z}(t),\mathbf{z'}(t)),d(\mathbf{\bar{a}},\mathbf{z})$ and $d(\mathbf{\bar{a}},\mathbf{z'}(t))$, (as illustrated by \autoref{fig:uniquely-determined-triangle}). Indeed, we may assume by translation that $\mathbf{\bar{a}}=\mathbf{0}$, and then
\begin{equation}\label{eq:s-convex-algebra}
\begin{aligned}
d&(\mathbf{\bar{a}},\mathbf{w}(t))^2=d(\mathbf{0},\mathbf{z}(t)+(1-s)\mathbf{z}'(t))^2\\
&=s^2d(\mathbf{0},\mathbf{z}(t))^2+\!(1\!-\!s)^2d(\mathbf{0},\mathbf{z'}(t))^2-\!2s(1\!-\!s)\mathbf{z}^T(t)\mathbf{z}'(t)
\end{aligned}
\end{equation}
The first two terms in \eqref{eq:s-convex-algebra} are constant by \autoref{solutions-in-S-are-a-constant-distance-apart} and the third can be computed as
\begin{equation}
2\mathbf{z}^T(t)\mathbf{z}'(t)=d(\mathbf{z}(t),\mathbf{z'}(t))^2-d(\mathbf{0},\mathbf{z}(t))^2-d(\mathbf{0},\mathbf{z'}(t))^2
\end{equation}
which is constant for the same reason.
\end{proof}

\begin{figure}[h]
\centering
\begin{tikzpicture}[scale=0.6,anchor=mid,>=latex',line join=bevel,]
\coordinate [label=left:\textcolor{blue}{$\mathbf{\bar{a}}$}] (abar) at (2,0);
\coordinate [label=left:\textcolor{blue}{$\mathbf{z}$}] (z) at (0,4);
\coordinate [label=right:\textcolor{blue}{$\mathbf{z'}$}] (zprime) at (5,3);
\coordinate [label=above:\textcolor{blue}{$\mathbf{w}$}] (w) at ($(z)!0.3!(zprime)$);
\fill (abar) circle (3pt);
\fill (z) circle (3pt);
\fill (zprime) circle (3pt);
\fill (w) circle (3pt);

\draw [<->] (z) -- (w);
\draw [<->] (w) -- (zprime);
 \draw [<->] (zprime) -- (abar);
\draw [<->] (abar) -- (z);
\draw [<->] (w) -- (abar);
\end{tikzpicture}
\caption{$\mathbf{z}$ and $\mathbf{z}'$ are two elements of $\mathcal{S}$ and $\mathbf{w}$ is a convex combination of them. $\mathbf{\bar{a}}$ is a saddle point in $\bar{\mathcal{S}}$. We know all the distances are constant except possibly $d(\mathbf{w},\mathbf{\bar{a}})$, but this is uniquely determined by the other four distances.}\label{fig:uniquely-determined-triangle}
\end{figure}
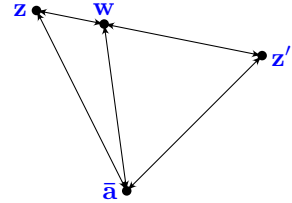


\begin{proof}[\icl{Proof of \autoref{cor:local}}]
\icl{The only if part is trivial. To prove the if part, assume there exists a trajectory in $\mathcal{S}$ that is not a saddle point, i.e. it is at a constant non-zero distance from each saddle point. Since saddle points are in $\mathcal{S}$, the convexity of $\mathcal{S}$ (\autoref{S-is-convex}) implies the existence of trajectories in $\mathcal{S}$ that are not saddle points and can be chosen to be arbitrarily close to the saddle point $\mathbf{\bar{z}}$. This contradicts the assumption in \autoref{cor:local}.}
\end{proof}
To prove \autoref{prop:unstable-to-noise} we require the following lemma which shows the existence of a conserved quantity of the gradient dynamics.
\begin{lemma}\label{lem:conserved-quantity-for-noise-proof}
Let $\varphi$ be $C^2$ and concave-convex on $\mathbb{R}^{n+m}$. Suppose that $\mathcal{S}$ contains a bi-infinite line $L=\{\mathbf{a}+s\mathbf{v}:s\in\mathbb{R}\}$. Assume that $\mathbf{0}\in\bar{\mathcal{S}}$. Then $W(t;\mathbf{z})=|(e^{t\mathbf{A}(\mathbf{0})}\mathbf{v})^T\mathbf{z}|^2$ is a conserved quantity for any solution $\mathbf{z}$ of \eqref{gradmethod-fullspace}.
\end{lemma}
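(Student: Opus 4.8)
The plan is to compare an arbitrary trajectory of the gradient method simultaneously against the \emph{whole} one-parameter family of limiting solutions supplied by the line $L$, and to read off the conserved quantity from the fact that pathwise stability must hold against every member of this family at once.

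First I would record the two elementary facts used throughout. Since $\varphi\in C^2$ we have $\varphi_{yx}=\varphi_{xy}^T$, so $\mathbf{A}(\mathbf{0})$ is skew-symmetric and $e^{t\mathbf{A}(\mathbf{0})}$ is orthogonal for every $t$ (hence preserves norms and inner products); also $\mathbf{v}\neq\mathbf{0}$, as otherwise $L$ is not a line. Next, for each $s\in\mathbb{R}$ the point $\mathbf{a}+s\mathbf{v}$ lies on a trajectory in $\mathcal{S}$; translating that trajectory in time (which preserves both being a solution of \eqref{gradmethod-fullspace} and being a constant distance from every saddle point, hence membership in $\mathcal{S}$) produces a solution $\mathbf{z}_s\in\mathcal{S}$ with $\mathbf{z}_s(0)=\mathbf{a}+s\mathbf{v}$. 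By \autoref{Classification-in-terms-of-B(z)z} this solution obeys the linear ODE, so $\mathbf{z}_s(t)=e^{t\mathbf{A}(\mathbf{0})}(\mathbf{a}+s\mathbf{v})=e^{t\mathbf{A}(\mathbf{0})}\mathbf{a}+s\,e^{t\mathbf{A}(\mathbf{0})}\mathbf{v}$; in particular each $\mathbf{z}_s$ is a genuine solution of \eqref{gradmethod-fullspace} and $|\mathbf{z}_s(t)|^2=|\mathbf{a}+s\mathbf{v}|^2$ is independent of $t$.

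Now let $\mathbf{z}(t)$ be any solution of \eqref{gradmethod-fullspace} and set $\theta(t)=(e^{t\mathbf{A}(\mathbf{0})}\mathbf{v})^T\mathbf{z}(t)$. Expanding the squared distance and substituting the formula for $\mathbf{z}_s(t)$ gives, for all $t,s$,
\[
d(\mathbf{z}(t),\mathbf{z}_s(t))^2
=\Big(|\mathbf{z}(t)|^2-2(e^{t\mathbf{A}(\mathbf{0})}\mathbf{a})^T\mathbf{z}(t)+|\mathbf{a}|^2\Big)
-2s\big(\theta(t)-\mathbf{a}^T\mathbf{v}\big)+s^2|\mathbf{v}|^2 ,
\]
a quadratic in $s$ whose leading coefficient $|\mathbf{v}|^2$ and whose $s$-independent additive terms $|\mathbf{a}|^2,\mathbf{a}^T\mathbf{v}$ do not depend on $t$. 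By \autoref{incrementally-stable-fullspace} the left-hand side is non-increasing in $t$ for each fixed $s$, so subtracting its values at times $t_1<t_2$ leaves an affine function of $s$ that is $\le0$ for all $s\in\mathbb{R}$, whose coefficient of $s$ equals $-2(\theta(t_2)-\theta(t_1))$. An affine function bounded above on all of $\mathbb{R}$ is constant, so $\theta(t_2)=\theta(t_1)$; as $t_1<t_2$ were arbitrary, $\theta$ is constant along $\mathbf{z}$, and therefore $W(t;\mathbf{z}(t))=|\theta(t)|^2$ is conserved.

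The only genuine idea is the one in the last paragraph — playing the non-increasing property off against all $s$ at once to annihilate the cross term — and I expect that to be the main obstacle, in that once it is seen the rest is bookkeeping (the remaining care being the linear-ODE form of the $\mathbf{z}_s$ via \autoref{Classification-in-terms-of-B(z)z} and the orthogonality of $e^{t\mathbf{A}(\mathbf{0})}$). One point worth flagging: the argument needs $\mathbf{a}+s\mathbf{v}\in\mathcal{S}$ for an unbounded set of $s$ of \emph{both} signs, which is exactly what a bi-infinite line provides; a half-line would only yield monotonicity, not constancy, of $\theta$.
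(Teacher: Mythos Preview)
Your proof is correct and follows essentially the same route as the paper's: use \autoref{Classification-in-terms-of-B(z)z} to identify the family of gradient-method solutions $\mathbf{z}_s(t)=e^{t\mathbf{A}(\mathbf{0})}(\mathbf{a}+s\mathbf{v})$, apply pathwise stability (\autoref{incrementally-stable-fullspace}) against this family, and extract $\theta(t)=(e^{t\mathbf{A}(\mathbf{0})}\mathbf{v})^T\mathbf{z}(t)$ by exploiting the unboundedness of $s$ in both directions. The only cosmetic differences are that the paper first uses the convexity and closedness of $\mathcal{S}$ (\autoref{S-is-convex}) to reduce to $\mathbf{a}=\mathbf{0}$, and then phrases the large-$|s|$ step as ``divide by $\lambda$ and let $\lambda\to\infty$'' rather than your equivalent ``an affine function bounded above on $\mathbb{R}$ has zero slope''.
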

\begin{proof}
As $\mathcal{S}$ is closed and convex (\autoref{S-is-convex}) we may assume that the line passes though the origin and take $\mathbf{a}=\mathbf{0}$. Let $\mathbf{v}(t)=e^{t\mathbf{A}(\mathbf{0})}\mathbf{v}$ and note that $\lambda \mathbf{v}(t)$ is a solution to the gradient method \eqref{gradmethod-fullspace} by \autoref{Classification-in-terms-of-B(z)z} for any $\lambda\in\mathbb{R}$. We follow the strategy of the first part of the proof of \autoref{infinite-line-of-saddles} with $-\lambda\mathbf{v}(t),\lambda\mathbf{v}(t)$ replacing the saddle points $\mathbf{\bar{a}}$,$\mathbf{\bar{b}}$. Indeed, let $\mathbf{z}(t)$ be any solution to \eqref{gradmethod-fullspace} and let $\lambda'=\mathbf{v}^T\mathbf{z}(0)$. Then for any $t\ge0$, \autoref{incrementally-stable-fullspace} implies that $\mathbf{z}(t)$ must satisfy
\begin{equation}\label{eq:conserved-quantity-proof}
d(\pm\lambda\mathbf{v}(t),\mathbf{z}(t))\le d(\pm\lambda\mathbf{v}(0),\mathbf{z}(0)),
\end{equation}
where by $\pm$ we mean that the equation holds for each of $+$ and $-$. In the same way as in the proof of \autoref{infinite-line-of-saddles}, taking the intersection of these balls for a sequence $\lambda\to\infty$ we deduce that $\mathbf{z}(t)$ is contained in the linear manifold normal to the line through the origin and $\mathbf{v}(t)$, and passing through $\lambda'\mathbf{v}(t)$. Indeed, by squaring \eqref{eq:conserved-quantity-proof} and expanding we obtain
\begin{equation*}
|\mathbf{z}(t)|^2\mp2\lambda \mathbf{v}(t)^T\mathbf{z}(t)\le |\mathbf{z}(0)|^2\mp 2\lambda \mathbf{v}(0)^T\mathbf{z}(0).
\end{equation*}
By dividing through by $\lambda$ and taking the limit $\lambda\to\infty$ we deduce that $\mathbf{v}(t)^T\mathbf{z}(t)$ is equal to $\mathbf{v}(0)^T\mathbf{z}(0)$ which implies that $W(t;\mathbf{z})$ is conserved.
\end{proof}
\begin{proof}[Proof of \autoref{prop:unstable-to-noise}]
Consider the conserved quantity $W(t;\mathbf{z})$ given by \autoref{lem:conserved-quantity-for-noise-proof}. Applying It\=o's lemma and taking expectations, we have
 \begin{equation*}
\begin{aligned}
 \frac{d}{dt}\mathbb{E}W(t;\mathbf{z}(t))&=\mathbb{E}\dot{W}(t;\mathbf{z}(t))+\tfrac12\mathbb{E}\operatorname{Tr}(\mathbf{\Sigma}^TW_{\mathbf{z}\mathbf{z}}\mathbf{\Sigma})
 \end{aligned}
 \end{equation*}
 where $\mathbf{\Sigma}=\diag(\Sigma^x,\Sigma^y)$, $\dot{W}$ is the total derivative along the deterministic flow \eqref{gradmethod-fullspace} and $\operatorname{Tr}$ is the trace operator. As $W$ is conserved along the deterministic flow, $\dot{W}=0$ and a simple computation shows that the second term is independent of $\mathbf{z}$ and bounded below by a strictly positive constant. Therefore $\mathbb{E}W(t;\mathbf{z}(t))$ grows at least linearly in time. It remains to note that $W(t;\mathbf{z})\le |e^{t\mathbf{A}(\mathbf{0})}\mathbf{v}|^2|\mathbf{z}|^2\le |\mathbf{v}|^2|\mathbf{z}|^2$, {\color{black}so that $|\mathbf{z}(t)|^2\ge cW(t;\mathbf{z}(t))$} for a constant $c>0$. This implies that also $\mathbb{E}|\mathbf{z}(t)|^2\to\infty$ and completes the proof of the proposition.
\end{proof}

\icl{To prove} \autoref{S-inside-Slinear} and \autoref{Classification-of-S-in-relaxed-linear-constraints-case} we {\color{black}make use of the following result that follows easily from linear algebra arguments.}
\begin{lemma}\label{largest-invariant-subspace}
Let $X$ be a linear subspace of $\mathbb{R}^n$ and $A\in\mathbb{R}^{n\times n}$ a normal matrix. Let
\begin{equation}
Y=\Span\{v\in X: v \text{ is an eigenvector of }A\}.
\end{equation}
Then $Y$ is the largest subset of $X$ that is invariant under $A$.
\end{lemma}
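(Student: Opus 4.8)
The plan is to prove two inclusions: that $Y$ is itself an $A$-invariant subspace of $X$, and that every $A$-invariant subspace of $X$ is contained in $Y$. Together these identify $Y$ as the largest $A$-invariant subspace of $X$ (and in particular show that the union of all such subspaces is again one of them).

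For the first inclusion, if $v\in X$ is an eigenvector with $Av=\lambda v$, then $Av\in\Span\{v\}\subseteq Y$; extending by linearity over the spanning set of $Y$ gives $AY\subseteq Y$, and $Y\subseteq X$ is immediate since each spanning vector lies in the subspace $X$.

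For the second inclusion, let $Z\subseteq X$ be $A$-invariant. The key tool is that a normal matrix is diagonalizable (over $\mathbb{C}$): writing its distinct eigenvalues as $\lambda_1,\dots,\lambda_k$ and $E_i=\ker(A-\lambda_i I)$, we have $\mathbb{C}^n=\bigoplus_i E_i$, and the spectral projections $\pi_i$ onto $E_i$ are polynomials in $A$ by Lagrange interpolation, $\pi_i=\prod_{j\ne i}(A-\lambda_j I)/(\lambda_i-\lambda_j)$. Since $Z$ is invariant under $A$ it is invariant under every polynomial in $A$, hence $\pi_i Z\subseteq Z$; thus each $z\in Z$ decomposes as $z=\sum_i\pi_i z$ with $\pi_i z\in E_i\cap Z$ an eigenvector (or zero). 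Therefore $Z$ is spanned by eigenvectors of $A$ lying in $Z\subseteq X$, so $Z\subseteq Y$.

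The one point requiring care is that a real normal matrix need not be diagonalizable over $\mathbb{R}$, so the argument above should be run after complexification: apply it to $Z_{\mathbb{C}}=Z\otimes\mathbb{C}\subseteq\mathbb{C}^n$, obtain a spanning set of complex eigenvectors whose real and imaginary parts lie in $Z\subseteq X$, and recover $Z\subseteq Y$ by taking real and imaginary parts, interpreting $Y$ accordingly. I expect this real/complex bookkeeping — rather than any conceptual difficulty — to be the main obstacle; everything else is standard linear algebra, and the normality hypothesis is used only to guarantee diagonalizability so that $A$-invariant subspaces are spanned by eigenvectors.
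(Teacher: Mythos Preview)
The paper does not actually prove this lemma --- it merely states that the result ``follows easily from linear algebra arguments'' and then moves on. So there is no proof in the paper to compare against, and your argument stands on its own.

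Your proof is correct and is the standard route. The Lagrange-interpolation construction of the spectral projections as polynomials in $A$ is exactly the right tool: it shows that any $A$-invariant subspace is invariant under each $\pi_i$ and hence splits along the eigenspaces. You are also right to flag the real/complex issue. In the paper the lemma is applied with $A=\mathbf{A}(\mathbf{0})$ skew-symmetric, which has no nonzero real eigenvalues, so ``eigenvector'' in the statement must be read over $\mathbb{C}$, with $Y$ understood as the real span of the real and imaginary parts of complex eigenvectors lying in $X_{\mathbb{C}}$. Under that reading your complexification argument goes through: if $v=u+iw\in X_{\mathbb{C}}$ satisfies $Av=\lambda v$ then $\Span\{u,w\}$ is a real $A$-invariant subspace of $X$, which also handles the first inclusion ($AY\subseteq Y$) in the complex case. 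The only thing you might add explicitly is this two-line check, since your first paragraph treats only real eigenvectors before deferring the complex case to the end.
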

We note that invariance of a subspace under $A$ is equivalent to invariance of the subspace under the group $e^{tA}$.
\begin{proof}[Proof of \autoref{S-inside-Slinear}]
\textbf{Step 1: $\mathcal{S}_{\text{linear}}\subseteq\mathcal{S}$ when $\varphi$ is a quadratic function.} We will use the characterisation of $\mathcal{S}$ given by \autoref{Classification-in-terms-of-B(z)z}. By \autoref{largest-invariant-subspace}, $\mathcal{S}_{\text{linear}}$ is invariant under $e^{t\mathbf{A}(\mathbf{0})}$, so that $\mathbf{z}(0)\in\mathcal{S}_{\text{linear}}\implies \mathbf{z}(t)=e^{t\mathbf{A}(\mathbf{0})}\mathbf{z}(0)\in\mathcal{S}_{\text{linear}}$. Hence if $\mathbf{z}(0)\in\mathcal{S}_{\text{linear}}$ then $\mathbf{z}(t)\in\ker(\mathbf{B'}(\mathbf{0}))$ for all time $t$, and as $\varphi$ is a quadratic function, $\mathbf{B'}(\mathbf{z})$ is constant, so this is enough to show $\mathcal{S}_{\text{linear}}\subseteq\mathcal{S}$.

\textbf{Step 2: $\mathcal{S}\subseteq\mathcal{S}_{\text{linear}}$.} Let $\mathbf{z}(t)\in\mathcal{S}$, then by \autoref{Classification-in-terms-of-B(z)z} taking $r=0$ we have $\mathbf{z}(t)=e^{t\mathbf{A}(\mathbf{0})}\in\ker(\mathbf{B'}(\mathbf{0}))$ for all $t\in\mathbb{R}$. Thus $\mathcal{S}$ lies inside the largest subset of $\ker(\mathbf{B'}(\mathbf{0}))$ that is invariant under the action of the group $e^{t\mathbf{A}(\mathbf{0})}$, which by \autoref{largest-invariant-subspace} is exactly $\mathcal{S}_{\text{linear}}$.
\end{proof}
In order to prove \autoref{Classification-of-S-in-relaxed-linear-constraints-case} we give a different interpretation of the condition in \autoref{Classification-in-terms-of-B(z)z}. The condition $\mathbf{z}\in\ker(\mathbf{B}(s\mathbf{z}))$ for all $s\in[0,1]$ looks like a line integral condition. Indeed, if we define a function $V(\mathbf{z})$ by%
\begin{equation}
V(\mathbf{z})=\mathbf{z}^T\left(\int^1_0\int^1_0\mathbf{B}(ss'\mathbf{z})s\,ds'\,ds\right)\mathbf{z}
\end{equation}
then as $\mathbf{B}(\mathbf{z})$ is symmetric negative semi-definite we have that $V(\mathbf{z})=0$ if and only if $\mathbf{z}\in\ker(\mathbf{B}(s\mathbf{z}))$ for every $s\in[0,1]$. This still leaves the condition $\mathbf{z}\in\ker(\mathbf{A}(s\mathbf{z})-\mathbf{A}(\mathbf{0}))$ for all $s\in[0,1]$, and the function $V$ has no natural interpretation in general. However in the specific case where $\varphi$ is the Lagrangian of a concave optimization problem where the relaxed constraints are linear, we do have an interpretation.
In this case the assumption that $\mathbf{0}$ is a saddle point is no longer generic and we must translate coordinates explicitly. Let the Lagrangian of the optimization problem be given by
\begin{equation}\label{lagconds}
\begin{aligned}
\varphi(x',y')&=U'(x')+{y'}^Tg'(x') \\
U'\in C^2\text{ and concave,}& \quad g' \text{ linear with }g_x'=D.
\end{aligned}
\end{equation}
We pick a saddle point $(\bar{x}',\bar{y}')$, and shift to new coordinates $(x,y)=(x'-\bar{x}',y'-\bar{y}')$ so that $(0,0)$ is a saddle point in the new coordinates. After expanding we obtain
\begin{equation}
\varphi(x,y)=(U'(x+\bar{x}')+\bar{y}^{\prime T}g'(x+\bar{x}'))+y^Tg'(x+\bar{x}')
\end{equation}
which is a Lagrangian originating from the utility function
\begin{equation}\label{eq:U-after-change-of-variables}
U(x)=U'(x+\bar{x}')+\bar{y}^{\prime T}g'(x+\bar{x}')
\end{equation}
and constraints $g(x)=g'(x+\bar{x}')$. Without loss of generality we assume that $U(0)=0$. As $g(x)$ is a linear function we have
\begin{equation}
\mathbf{B}(\mathbf{z})=\begin{bmatrix}
                        U_{xx}(x)&0\\
			0&0
                       \end{bmatrix}
\end{equation}
so that $V(\mathbf{z})$ is independent of $y$, and in fact by direct computation we have $V(\mathbf{z})=U(x)$. This leads us to the following lemma.
\begin{lemma}\label{S-in-terms-of-U^-1(0)}
Let \eqref{lagconds} hold. Then $\mathcal{S}$ is the largest subset of $U^{-1}(\{0\})\times\mathbb{R}^m=\{(x,y)\in\mathbb{R}^{n+m}:U(x)=0\}$ that is invariant under evolution by the group $e^{t\mathbf{A}(\mathbf{0})}$, where $U$ is given by \eqref{eq:U-after-change-of-variables}.
\end{lemma}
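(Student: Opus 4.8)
The plan is to read the lemma off from \autoref{Classification-in-terms-of-B(z)z} together with the two structural facts just recorded for the linear-constraint case: that $\mathbf{A}(\mathbf{z})$ is a constant matrix (since $\varphi_{xy}=D^T$ and $\varphi_{yx}=D$ do not depend on $\mathbf{z}$), and that the auxiliary function $V$ satisfies $V(\mathbf{z})=U(x)$ while $V(\mathbf{z})=0$ is equivalent to $\mathbf{z}\in\ker(\mathbf{B}(r\mathbf{z}))$ for all $r\in[0,1]$.

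First I would simplify the characterisation of $\mathcal{S}$ in this setting. Because $\mathbf{A}(\mathbf{z})\equiv\mathbf{A}(\mathbf{0})$, the factor $\ker(\mathbf{A}(r\mathbf{z}(t))-\mathbf{A}(\mathbf{0}))$ appearing in \eqref{eq:first-thm-kernel-condition} is the whole space, so \autoref{Classification-in-terms-of-B(z)z} says exactly that a solution $\mathbf{z}(t)=e^{t\mathbf{A}(\mathbf{0})}\mathbf{z}(0)$ of the limiting linear ODE lies in $\mathcal{S}$ if and only if $\mathbf{z}(t)\in\ker(\mathbf{B}(r\mathbf{z}(t)))$ for all $t\in\mathbb{R}$ and $r\in[0,1]$. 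By the stated properties of $V$ this is the same as $V(\mathbf{z}(t))=0$, i.e. $U(x(t))=0$ where $\mathbf{z}(t)=(x(t),y(t))$, for all $t$. Thus $\mathcal{S}$ consists precisely of the orbits of the group $\{e^{t\mathbf{A}(\mathbf{0})}\}$ that remain forever inside $U^{-1}(\{0\})\times\mathbb{R}^m$.

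From this both halves of the lemma follow quickly. On the one hand $\mathcal{S}$, being a union of full orbits of $\dot{\mathbf{z}}=\mathbf{A}(\mathbf{0})\mathbf{z}$, is invariant under $\{e^{t\mathbf{A}(\mathbf{0})}\}$, and evaluating the condition of the previous paragraph at $t=0$ shows $\mathcal{S}\subseteq U^{-1}(\{0\})\times\mathbb{R}^m$. On the other hand, if $W\subseteq U^{-1}(\{0\})\times\mathbb{R}^m$ is any set invariant under $\{e^{t\mathbf{A}(\mathbf{0})}\}$ and $\mathbf{z}_0\in W$, then $t\mapsto e^{t\mathbf{A}(\mathbf{0})}\mathbf{z}_0$ stays in $U^{-1}(\{0\})\times\mathbb{R}^m$ for all $t$, hence is a solution in $\mathcal{S}$, so $\mathbf{z}_0\in\mathcal{S}$; therefore $W\subseteq\mathcal{S}$, which is the maximality claim.

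The argument is essentially bookkeeping on top of \autoref{Classification-in-terms-of-B(z)z}; the only point that needs care is the passage between the two ways $\mathcal{S}$ is used (a family of solutions versus the union of their orbits) and the exact equivalence ``$U(x(t))=0$ for all $t$'' $\Leftrightarrow$ ``$\mathbf{z}(\cdot)\in\mathcal{S}$'', where the negative semidefiniteness of $\mathbf{B}$ (through $V$) is what converts the scalar condition $V=0$ back into the required kernel membership. I do not expect a genuine obstacle beyond being precise about this.
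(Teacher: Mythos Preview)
Your proposal is correct and follows essentially the same route as the paper: both reduce the characterisation in \autoref{Classification-in-terms-of-B(z)z} to the kernel condition on $\mathbf{B}$ alone (using that $\mathbf{A}(\mathbf{z})\equiv\mathbf{A}(\mathbf{0})$ in the linear-constraint case), invoke the equivalence $V(\mathbf{z})=0\iff\mathbf{z}\in\ker(\mathbf{B}(r\mathbf{z}))$ for all $r\in[0,1]$ together with $V(\mathbf{z})=U(x)$, and then verify the two inclusions $\mathcal{S}\subseteq\mathcal{Y}$ and $\mathcal{Y}\subseteq\mathcal{S}$. If anything, you are slightly more explicit than the paper in noting why the $\ker(\mathbf{A}(r\mathbf{z})-\mathbf{A}(\mathbf{0}))$ factor drops out.
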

\begin{proof}
Denote the set defined in the lemma as $\mathcal{Y}$.

\textbf{Step 1: $\mathcal{S}\subseteq\mathcal{Y}$.}
By the computation above we know that $\mathbf{z}\in U^{-1}(\{0\})\times{\color{black}\mathbb{R}^{m}}$ if and only if $\mathbf{z}\in\ker(\mathbf{B}(s\mathbf{z}))$ for all $s\in[0,1]$. Thus by \autoref{Classification-in-terms-of-B(z)z}, we have $\mathcal{S}\subseteq U^{-1}(\{0\})\times\mathbb{R}^m$ {\color{black}as $\mathcal{S}$ is} invariant under the action of $e^{t\mathbf{A}(\mathbf{0})}$.

\textbf{Step 2: $\mathcal{Y}\subseteq\mathcal{S}$.} If $\mathbf{z}(0)$ is in the largest subset of $U^{-1}(\{0\})\times\mathbb{R}^m$ invariant under the action of $e^{t\mathbf{A}(\mathbf{0})}$, then $\mathbf{z}(t)$ is in this set for all $t\in\mathbb{R}$. Defining $\mathbf{z}(t)=e^{t\mathbf{A}(\mathbf{0})}\mathbf{z}(0)$, we have $\mathbf{z}(t)\in\ker(\mathbf{B}(s\mathbf{z}(t)))$ for all $s\in[0,1]$, so $\mathbf{z}(t)\in\mathcal{S}$ by \autoref{Classification-in-terms-of-B(z)z}.
\end{proof}
To obtain a more exact expression for $\mathcal{S}$, we make use of the assumption that $U$ is analytic.
\begin{lemma}\label{Classification-of-U^-1(0)-in-analytic-case}
Let \eqref{lagconds} hold and in addition $U$ given by \eqref{eq:U-after-change-of-variables} be analytic. Then
\begin{enumerate}[(i)]
 \item $U^{-1}(\{0\})=\Span(U^{-1}(\{0\}))$.
 \item $\mathcal{S}=\{e^{t\mathbf{A}(\mathbf{0})}\mathbf{z}(0):\mathbf{z}(0)\in\mathcal{Q}\}$ where
\begin{equation}
\begin{aligned}
&\mathcal{Q}=\Span\{ (x,y)\in U^{-1}(\{0\})\times\mathbb{R}^m: \\
&(x,y)\text{ is an eigenvector of }\begin{bmatrix}
0&D^T\\
-D&0                                                                              \end{bmatrix}\bigg\}
\end{aligned}
\end{equation}
\end{enumerate}
\end{lemma}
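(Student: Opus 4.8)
The plan is to prove the two assertions in order, using (i) to trivialise the geometry in (ii). For (i), I would first extract the consequences of $\mathbf{0}$ being a saddle point of $\varphi(x,y)=U(x)+y^Tg(x)$: since $g$ is linear, $\varphi_x(\mathbf{0})=U_x(0)=0$, so the concave $C^2$ function $U$ has a critical point at $0$ and hence, by the tangent-plane inequality for concave functions, a global maximum there; with the normalisation $U(0)=0$ this gives $U\le 0$ on $\mathbb{R}^n$ and $U^{-1}(\{0\})=\{x:U(x)\ge0\}$. The latter is convex (a superlevel set of a concave function) and contains $0$, so to conclude it is a linear subspace it suffices to show it is closed under multiplication by every real scalar: then for $x,x'\in U^{-1}(\{0\})$ the midpoint $\tfrac12(x+x')$ lies in $U^{-1}(\{0\})$ by convexity and $x+x'=2\cdot\tfrac12(x+x')$ does too.

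To prove the scaling closure, fix $x\in U^{-1}(\{0\})$ and set $h(s)=U(sx)$. This is concave in $s\in\mathbb{R}$, bounded above by $0$, and vanishes at $s=0$ and $s=1$, so concavity alone forces $h\equiv0$ on $[0,1]$. Here is where analyticity is used in an essential way: $h$ is real-analytic on $\mathbb{R}$ (the composition of the analytic $U$ with a linear map), so by the identity theorem a real-analytic function vanishing on an interval vanishes on all of $\mathbb{R}$; hence $U(sx)=0$ for every $s\in\mathbb{R}$, i.e.\ $sx\in U^{-1}(\{0\})$. This establishes (i).

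For (ii), the computation preceding the lemma already gives $\mathbf{A}(\mathbf{0})=\left[\begin{smallmatrix}0&D^{T}\\-D&0\end{smallmatrix}\right]$ (as $g$ is linear with $g_x=D$), a skew-symmetric and hence normal matrix. By \autoref{S-in-terms-of-U^-1(0)}, $\mathcal{S}$ is the largest subset of $X:=U^{-1}(\{0\})\times\mathbb{R}^m$ invariant under the group $e^{t\mathbf{A}(\mathbf{0})}$, and by (i) this $X$ is a linear subspace of $\mathbb{R}^{n+m}$. I would then note that $W:=\{\mathbf{z}:e^{t\mathbf{A}(\mathbf{0})}\mathbf{z}\in X\text{ for all }t\}$ is a linear subspace (an intersection of preimages of the subspace $X$ under linear maps), is $e^{t\mathbf{A}(\mathbf{0})}$-invariant, and contains every $e^{t\mathbf{A}(\mathbf{0})}$-invariant subset of $X$; hence $W=\mathcal{S}$, so $\mathcal{S}$ is a subspace. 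For subspaces, invariance under the group $e^{t\mathbf{A}(\mathbf{0})}$ coincides with invariance under $\mathbf{A}(\mathbf{0})$ (the remark after \autoref{largest-invariant-subspace}), so $\mathcal{S}$ is the largest $\mathbf{A}(\mathbf{0})$-invariant subspace of $X$, which by \autoref{largest-invariant-subspace} (with the normal matrix $\mathbf{A}(\mathbf{0})$) equals $\Span\{(x,y)\in X:(x,y)\text{ an eigenvector of }\mathbf{A}(\mathbf{0})\}$. Since membership in $X$ imposes no constraint on the $y$-component, this span is exactly $\mathcal{Q}$; and as $\mathcal{Q}$ is $e^{t\mathbf{A}(\mathbf{0})}$-invariant we may write $\mathcal{S}=\mathcal{Q}=\{e^{t\mathbf{A}(\mathbf{0})}\mathbf{z}(0):\mathbf{z}(0)\in\mathcal{Q}\}$, which is the stated form.

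The only step that is more than bookkeeping is the passage in (i) from "$U$ vanishes on the segment $[0,x]$" to "$U$ vanishes on the whole line $\mathbb{R}x$": the concavity together with $U\le0$ and the critical point at $0$ is what yields vanishing on a segment, and analyticity is what propagates it along the line, with neither ingredient sufficient on its own. Once (i) is in hand, part (ii) is simply a matter of correctly threading \autoref{S-in-terms-of-U^-1(0)}, \autoref{largest-invariant-subspace}, and the subset-versus-subspace observation above.
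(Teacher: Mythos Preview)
Your proof is correct and follows essentially the same route as the paper. For (i) you use concavity to get vanishing on a segment and analyticity to propagate along the line, exactly as the paper does (the paper takes an arbitrary segment $L\subset U^{-1}(\{0\})$ rather than specifically $[0,x]$, but this is cosmetic); for (ii) you combine \autoref{S-in-terms-of-U^-1(0)} with \autoref{largest-invariant-subspace} just as the paper does, and your intermediate step showing that the largest $e^{t\mathbf{A}(\mathbf{0})}$-invariant subset of $X$ is itself a subspace is a detail the paper leaves implicit.
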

\begin{proof}
We begin with (i). Recall we have assumed without loss of generality that $U(0)=0$. As $U^{-1}(\{0\})$ is the set of maxima of a concave function, it is convex. If $U^{-1}(\{0\})$ is the single point $0$, then (i) is trivial. Otherwise let $L$ be a line segment (of strictly positive length) in $U^{-1}(\{0\})$, and let $\hat{L}$ be the bi-infinite extension of $L$. Let $f$ be a linear bijection from $\mathbb{R}$ to $\hat{L}$, and let $I\subset \mathbb{R}$ be the interval in $\mathbb{R}$ given by $f^{-1}(L)$. Then $U(f(t)):\mathbb{R}\to\mathbb{R}$ is an analytic function whose restriction to $I$ vanishes. Hence $U(f(t))$ vanishes everywhere on $\mathbb{R}$, which is equivalent to $U$ vanishing on $\hat{L}$. By varying the choice of $L$, we deduce that $U^{-1}(\{0\})$ contains infinite lines in every direction in $\Span(U^{-1}(\{0\}))$ and by convexity is equal to $\Span(U^{-1}(\{0\}))$.

(ii) is a consequence of \autoref{S-in-terms-of-U^-1(0)} and \autoref{largest-invariant-subspace}.
\end{proof}
Lastly, we translate back into the original coordinates.

\begin{lemma}\label{U^-1-in-terms-of-U'}
Let \eqref{lagconds} hold and $U'$ be analytic, then
\begin{equation*}
\!U^{-1}(\{0\})=\{x\in\mathbb{R}^{n}\!:\mathbb{R}\ni s\mapsto U'(sx+\bar{x}')\text{ is linear}\}
\end{equation*}
where $U$ is given by \eqref{eq:U-after-change-of-variables}.
\end{lemma}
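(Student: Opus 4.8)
The plan is to express the shifted utility $U$ of \eqref{eq:U-after-change-of-variables} in terms of the original $U'$ and the matrix $D$ alone, and then read off the claim from \autoref{Classification-of-U^-1(0)-in-analytic-case}. First I would record the two consequences of $(\bar{x}',\bar{y}')$ being a saddle point of $\varphi(x',y')=U'(x')+{y'}^Tg'(x')$ in \eqref{lagconds}: since $g'$ is affine with $g'_{x'}=D$, the condition $\varphi_{y'}=0$ gives $g'(\bar{x}')=0$, hence $g'(x+\bar{x}')=Dx$; and $\varphi_{x'}=0$ gives $U'_x(\bar{x}')+D^T\bar{y}'=0$. Substituting into \eqref{eq:U-after-change-of-variables} yields $U(x)=U'(x+\bar{x}')+\bar{y}'^TDx$, so $U_x(0)=U'_x(\bar{x}')+D^T\bar{y}'=0$; since $U$ is concave this makes $\mathbf{0}$ a global maximiser of $U$, and with the standing normalisation $U(0)=0$ we obtain $U\le 0$ on $\mathbb{R}^n$ and $U'(\bar{x}')=U(0)=0$. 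The elementary observation driving the proof is that for each fixed $x$ the maps $s\mapsto U(sx)$ and $s\mapsto U'(sx+\bar{x}')$ differ by the linear function $s\mapsto s\,\bar{y}'^TDx$; hence one is affine in $s$ exactly when the other is, and since both vanish at $s=0$, ``affine'' coincides with ``linear'' in this situation.

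For the inclusion $\subseteq$: if $x\in U^{-1}(\{0\})$ then, noting that $U$ is analytic because $U'$ is, \autoref{Classification-of-U^-1(0)-in-analytic-case}(i) gives that $U^{-1}(\{0\})$ is a linear subspace, so $sx\in U^{-1}(\{0\})$ and $U(sx)=0$ for every $s\in\mathbb{R}$; then $s\mapsto U'(sx+\bar{x}')=-s\,\bar{y}'^TDx$ is linear and $x$ lies in the right-hand set.

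For the inclusion $\supseteq$: if $s\mapsto U'(sx+\bar{x}')$ is linear, then by the observation above $s\mapsto U(sx)$ is linear, say $U(sx)=\beta s$; but $U(sx)\le 0$ for every $s\in\mathbb{R}$ forces $\beta=0$, hence $U(sx)\equiv 0$ and in particular $U(x)=0$, i.e. $x\in U^{-1}(\{0\})$.

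The argument is short, and I do not expect a genuine obstacle; the points requiring care are the reduction of $U$ to a function of $U'$ and $D$ via the saddle-point identities, the identification of ``linear'' with ``affine'' made legitimate by the normalisation $U'(\bar{x}')=0$, and the role of analyticity — which enters only through \autoref{Classification-of-U^-1(0)-in-analytic-case}(i) — in upgrading the purely concavity-based fact that $U$ is constant on the segment $[0,1]x$ to constancy on the whole line $\mathbb{R}x$. Everything else is the routine bookkeeping indicated above.
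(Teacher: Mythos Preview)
Your proof is correct and follows essentially the same route as the paper: both use \autoref{Classification-of-U^-1(0)-in-analytic-case}(i) for the forward inclusion and the fact that $s\mapsto U(sx)$ and $s\mapsto U'(sx+\bar{x}')$ differ by a linear function for the reverse one. The only cosmetic difference is that for $\supseteq$ the paper concludes $U(sx)\equiv 0$ from $U(0)=0$ and $U_x(0)=0$ directly, whereas you argue via $U\le 0$; both are immediate.
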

\begin{proof}
 Suppose that $x\in U^{-1}(\{0\})$ then by \autoref{Classification-of-U^-1(0)-in-analytic-case} $U(sx)=0$ for all $s\in\mathbb{R}$. Recall that $U-U'$ is a linear function. Hence $U'(sx+\bar{x}')$ is linear as a function of $s\in\mathbb{R}$. Now suppose that $U'(sx+\bar{x}')$ is linear as a function of $s\in\mathbb{R}$ for some $x\in\mathbb{R}^n$, then $U(sx)$ is also linear. But $U(0)=0$ and $U_x(0)=0$, as $\mathbf{0}$ is a saddle point of $\varphi$, so by linearity $U(sx)=0$ for all $s\in\mathbb{R}$.
\end{proof}
\begin{proof}[Proof of \autoref{Classification-of-S-in-relaxed-linear-constraints-case}]
 This is just a simple combination of \autoref{U^-1-in-terms-of-U'} and \autoref{Classification-of-U^-1(0)-in-analytic-case}.
\end{proof}

We now consider the case of the projected gradient method.
\begin{proof}[Proof of \autoref{thm:projected-gradient-method-result}]
We show how to adapt the proof of the results on the gradient method. We denote the set of equilibrium points of the projected gradient method as $\bar{\mathcal{S}}^{\mathbf{\Pi}}$ and similarly $\mathcal{S}^{\mathbf{\Pi}},\mathcal{S}^{\mathbf{\Pi}}_{\mathbf{\bar{z}}}$, in analogy with $\mathcal{S},\mathcal{S}_{\mathbf{\bar{z}}}$.

We first note that the projected gradient method is pathwise stable
(Lemma 22 in part II).
Together with the assumption that $\mathbf{0}\in\bar{\mathcal{S}}^{\mathbf{\Pi}}$, this means that the reasoning in \appendixref{sec:Geometry-of-S-bar-and-S} applies, and in particular a version of \autoref{combined-convexity-of-S-and-bar-S-fullspace} holds, i.e.
\begin{lemma}
Let $\varphi$ be $C^2$ and concave-convex on $\mathbb{R}^{n+m}$, $\mathbf{\Pi}\in\mathbb{R}^{(n+m)^2}$ be an orthogonal projection matrix, $\mathbf{\bar{z}}\in\bar{\mathcal{S}}^{\mathbf{\Pi}}$ and $\mathbf{z}(t)\in\mathcal{S}_{\mathbf{\bar{z}}}^{\mathbf{\Pi}}$. Then for any $s\in[0,1]$, the convex combination $\mathbf{z}'(t)=(1-s)\mathbf{\bar{z}}+s\mathbf{z}(t)$ lies in $\mathcal{S}_{\mathbf{\bar{z}}}^{\mathbf{\Pi}}$.  If in addition $\mathbf{z}\in\mathcal{S}^{\mathbf{\Pi}}$, then $\mathbf{z'}(t)\in\mathcal{S}^{\mathbf{\Pi}}$.
\end{lemma}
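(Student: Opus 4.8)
The plan is to mimic the proof of \autoref{combined-convexity-of-S-and-bar-S-fullspace} essentially verbatim, replacing the gradient method by the projected gradient method \eqref{eq:projected-gradient-method} and the sets $\bar{\mathcal{S}},\mathcal{S},\mathcal{S}_{\mathbf{\bar{z}}}$ by their projected counterparts $\bar{\mathcal{S}}^{\mathbf{\Pi}},\mathcal{S}^{\mathbf{\Pi}},\mathcal{S}^{\mathbf{\Pi}}_{\mathbf{\bar{z}}}$. The only structural inputs required are that \eqref{eq:projected-gradient-method} is pathwise stable (checked directly, as noted just above) and that its equilibrium points are genuinely stationary; these are precisely the hypotheses that drive all of \autoref{sec:Geometry-of-S-bar-and-S}.

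First I would record the projected analogues of the preliminary geometric lemmas. The set $\bar{\mathcal{S}}^{\mathbf{\Pi}}$ is closed and convex: the two-balls argument of \autoref{convexity-of-saddles-fullspace} uses only pathwise stability and the fact that a point constrained to stay in both balls is forced to be stationary. Any $\mathbf{z}(t)\in\mathcal{S}^{\mathbf{\Pi}}$ satisfies $\mathbf{z}(t)\in M_{\bar{\mathcal{S}}^{\mathbf{\Pi}}}(\mathbf{z}(0))$: this is the intersecting-spheres argument of \autoref{S-orthogonal-to-barS-fullspace}. Finally, the converse \autoref{Orthogonal-and-constant-distance-from-one-implies-in-S} holds, namely a trajectory of \eqref{eq:projected-gradient-method} that is a constant distance from one equilibrium $\mathbf{\bar{z}}$ and lies in $M_{\bar{\mathcal{S}}^{\mathbf{\Pi}}}(\mathbf{z}(0))$ is automatically a constant distance from every equilibrium, by the same Pythagorean computation. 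None of these proofs use any property of the vector field beyond pathwise stability, so they transfer unchanged.

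Then the body of the argument runs as follows. Fix $s\in[0,1]$ and set $\mathbf{z}'(t)=(1-s)\mathbf{\bar{z}}+s\mathbf{z}(t)$. Clearly $d(\mathbf{z}'(t),\mathbf{\bar{z}})=s\,d(\mathbf{z}(t),\mathbf{\bar{z}})$ is constant, so $\mathbf{z}'$ is a constant distance from $\mathbf{\bar{z}}$. To see $\mathbf{z}'$ solves \eqref{eq:projected-gradient-method}, let $\mathbf{z}''$ be the trajectory of \eqref{eq:projected-gradient-method} issuing from $\mathbf{z}'(0)$ and write $r$ for the constant distance $d(\mathbf{z}(t),\mathbf{\bar{z}})$; pathwise stability of \eqref{eq:projected-gradient-method} forces $d(\mathbf{z}''(t),\mathbf{\bar{z}})\le rs$ and $d(\mathbf{z}''(t),\mathbf{z}(t))\le r(1-s)$ for all $t\ge 0$, and since these two balls have radii summing to the distance $r$ between their centres, the unique common point is $\mathbf{z}'(t)=(1-s)\mathbf{\bar{z}}+s\mathbf{z}(t)$; hence $\mathbf{z}''=\mathbf{z}'$ and $\mathbf{z}'\in\mathcal{S}^{\mathbf{\Pi}}_{\mathbf{\bar{z}}}$. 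For the second assertion, suppose in addition $\mathbf{z}\in\mathcal{S}^{\mathbf{\Pi}}$. By the projected analogue of \autoref{S-orthogonal-to-barS-fullspace} we have $\mathbf{z}(t)\in M_{\bar{\mathcal{S}}^{\mathbf{\Pi}}}(\mathbf{z}(0))$, and since $\mathbf{z}'(t)$ is the image of $\mathbf{z}(t)$ under the homothety with centre $\mathbf{\bar{z}}\in\bar{\mathcal{S}}^{\mathbf{\Pi}}$ and ratio $s$, it lies on the parallel orthogonal manifold $M_{\bar{\mathcal{S}}^{\mathbf{\Pi}}}(\mathbf{z}'(0))$; the projected analogue of \autoref{Orthogonal-and-constant-distance-from-one-implies-in-S} then gives $\mathbf{z}'\in\mathcal{S}^{\mathbf{\Pi}}$.

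The main obstacle is really bookkeeping rather than mathematics: one must confirm that the whole chain of geometric lemmas in \autoref{sec:Geometry-of-S-bar-and-S} depends only on pathwise stability and on equilibria being fixed points, so that each may be reinvoked with $\mathbf{\Pi}\mathbf{f}$ in place of $\mathbf{f}$. The one point deserving a careful check is that $M_{\bar{\mathcal{S}}^{\mathbf{\Pi}}}(\cdot)$ behaves correctly under the homothety used in the last step, i.e.\ that a homothety centred at a point of $\bar{\mathcal{S}}^{\mathbf{\Pi}}$ sends cosets of $\Span(\{\mathbf{u}-\mathbf{u}':\mathbf{u},\mathbf{u}'\in\bar{\mathcal{S}}^{\mathbf{\Pi}}\})^{\perp}$ to parallel cosets; this is immediate from the definition. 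Everything else is a line-for-line copy of the unprojected proof.
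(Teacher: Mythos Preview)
Your proposal is correct and is exactly the approach the paper takes: the paper simply asserts that pathwise stability of the projected gradient method (verified directly) means the reasoning of \autoref{sec:Geometry-of-S-bar-and-S} applies, yielding the projected version of \autoref{combined-convexity-of-S-and-bar-S-fullspace}. You have spelled out more of the details than the paper does, but the argument is the same line-for-line transfer you describe.
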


Equation \eqref{line-integral-form-of-time-derivative} becomes
\begin{equation*}
\mathbf{\Pi}
\begin{bmatrix}
   \varphi_x(\mathbf{z})\\
   -\varphi_y(\mathbf{z})
  \end{bmatrix}
  \mathbf{\Pi}
=\left(\int_0^{|\mathbf{z}|}\mathbf{\Pi}\begin{bmatrix}
\varphi_{xx}(s\mathbf{\hat{z}}) &\!\! \varphi_{xy}(s\mathbf{\hat{z}})\\
-\varphi_{yx}(s\mathbf{\hat{z}}) &\!\! -\varphi_{yy}(s\mathbf{\hat{z}})
\end{bmatrix}\mathbf{\Pi}
ds
\right)\mathbf{\hat{z}}
\end{equation*}
and we replace \eqref{def-of-A-and-B} with
\begin{align*}
  \!\widetilde{\mathbf{A}}(\mathbf{z})&=\mathbf{\Pi}\begin{bmatrix}
0&\!\!\!\!\!\! \varphi_{xy}(\mathbf{z})\notag\\
-\varphi_{yx}(\mathbf{z}) & 0
\end{bmatrix}\mathbf{\Pi}&\\
\widetilde{\mathbf{B}}(\mathbf{z})&=\mathbf{\Pi}\begin{bmatrix}
\varphi_{xx}(\mathbf{z})\!\!\!\!\!\! & 0\\
0 & -\varphi_{yy}(\mathbf{z})
\end{bmatrix}\mathbf{\Pi}
\end{align*}
The remainder of the {\color{black}proof carries} through unaltered {\color{black}in analogy with that of Theorem~\ref{Classification-in-terms-of-B(z)z}}.
\end{proof}
\section{The addition of constant gains}\label{sec:the-addition-of-constant-gains}
It is common in applications to consider the gradient method with constant gains, i.e.
\begin{equation}
\label{gradmethod-gains-fullspace}
\begin{aligned}
\dot{x}_i&=\gamma^x_i\varphi_{x_i}&&\text{for }i=1,\dotsc,n,\\
\dot{y}_j&=-\gamma^y_j\varphi_{y_j}&&\text{for }j=1,\dotsc,m.
\end{aligned}
\end{equation}
for $\varphi\in C^2$ a concave-convex function on $\mathbb{R}^{n+m}$ and $\gamma^x_i,\gamma^y_j$ positive constants. However, in the setting of an arbitrary concave-convex {\color{black}function}, this is not a generalisation, and it is sufficient to study the gradient method \eqref{gradmethod-fullspace} without gains, by a coordinate transformation that we now describe.

Let $\Lambda$ be a diagonal matrix defined from the gains by
\begin{equation}\label{eq:lambda-gains}
	 \mathbf{\Lambda}=\diag(\sqrt{\gamma^x_1},\dotsc,\sqrt{\gamma^x_n},\sqrt{\gamma^y_1},\dotsc,\sqrt{\gamma^y_m}).
\end{equation}
Given a concave-convex function $\varphi$ we define a new concave-convex function $\varphi'$ by
\begin{equation}
\varphi'(\mathbf{z}')=\varphi(\mathbf{\Lambda}\mathbf{z}').
\end{equation}
Let $\mathbf{z}'(t)$ be a solution to the gradient method \eqref{gradmethod-fullspace} without gains applied to $\varphi'$, then
{\color{black}
\begin{equation}
\mathbf{z}(t):=\mathbf{\Lambda}\mathbf{z}'(t)
\label{eq:coord_transf}
\end{equation}
}
is a solution to the gradient method \eqref{gradmethod-gains-fullspace} applied to $\varphi$ with gains. Indeed, we have
\begin{equation*}
\dot{\mathbf{z}}(t)=\mathbf{\Lambda} \dot{\mathbf{z}}'(t)=\mathbf{\Lambda}^2\begin{bmatrix}
                        \varphi_x(\mathbf{\Lambda} \mathbf{z}'(t))\\
			-\varphi_y(\mathbf{\Lambda} \mathbf{z}'(t))
                       \end{bmatrix}=\mathbf{\Lambda}^2\begin{bmatrix}
                                               \varphi_x(\mathbf{z}(t))\\
                       			-\varphi_y(\mathbf{z}(t))
                                              \end{bmatrix}
\end{equation*}
and the $\mathbf{\Lambda}^2$ term gives the gains.

Thus any properties of the gradient method with gains can be obtained from the gradient method without gains applied to a suitably modified function.

However, applying this transformation to the subgradient method has the effect of altering the metric in the convex projection. {\color{black}{\color{black}We therefore use} the following definition of subgradient dynamics with gains.}
\begin{definition}[Subgradient method with gains]
Given a non-empty closed convex set $K\subseteq\mathbb{R}^{n+m}$, $\varphi\in C^2$ a concave-convex function on $K$ and a set of positive gains $\gamma^x_i,\gamma^y_j$ as in \eqref{gradmethod-gains-fullspace}, we define the \textit{subgradient method on $K$ with gains} as a semi-flow on $(K,d)$ consisting of Carath\'eodory solutions of
\begin{equation}\label{gradmethod-gains-convex-domain}
\begin{aligned}
\dot{\mathbf{z}}&=\mathbf{f}(\mathbf{z})-\mathbf{P}_{N_K(\mathbf{z}),d_{\mathbf{\Lambda}^{-1}}}(\mathbf{f}(\mathbf{z}))\\
\end{aligned}
\end{equation}
where $\mathbf{f}(\mathbf{z})$ is the vector field of the gradient method with gains \eqref{gradmethod-gains-fullspace} and $\mathbf{P}_{M,d_{\mathbf{\Lambda}^{-1}}}$ is a weighted convex projection given by
\begin{equation}
\begin{aligned}
\mathbf{P}_{M,d_{\mathbf{\Lambda}^{-1}}}(\mathbf{z})&=\operatorname{argmin}_{\mathbf{w}\in M}d(\mathbf{\Lambda}^{-1}\mathbf{w},\mathbf{\Lambda}^{-1}\mathbf{z})
\end{aligned}
\end{equation}
where $\Lambda$ is defined in terms of the gains by \eqref{eq:lambda-gains}.
\end{definition}
{\color{black}It should be noted that the weighted metric used in the projection 
arises from the stretching of the domain $K$ when the coordinate transformation \eqref{eq:coord_transf} is applied.}
\begin{remark}
{\color{black}When non-negativity constraints are present the subgradient dynamics are}
not affected by this change to the metric in the convex projection, {\color{black}i.e. the dynamics in \eqref{gradmethod-gains-convex-domain} are identical to the ones where an unweighted metric is used in the projection}. For example, if the $y$ coordinates are restricted to be non-negative and the $x$ coordinates unconstrained, then the subgradient method with gains \eqref{gradmethod-gains-convex-domain} is given by
\begin{equation}
\begin{aligned}
\dot{x}_i&=\gamma^x_i\varphi_{x_i}&&\text{for }i=1,\dotsc,n,\\
\dot{y}_j&=[-\gamma^y_j\varphi_{y_j}]_{y_j}^+&&\text{for }j=1,\dotsc,m.
\end{aligned}
\end{equation}
This holds more generally for any convex set $K$ with boundaries aligned to the coordinate axes.
\end{remark}

\end{document}